\newcommand{\R}{{\mathbb{R}}}
\newcommand{\C}{{\mathbb{C}}}
\newcommand{\N}{\mathbb{N}}
\newcommand{\Z}{\mathbb{Z}}
\newcommand{\h}{\mathcal{H}}
\newcommand{\s}{\mathcal{S}}
\newcommand{\dx}{\:\mathrm{d}\,x\:}
\newcommand{\dxy}{\:\mathrm{d\lambda}\,(x,y)\:}
\newcommand{\dz}{\:\mathrm{d}\,z\:}
\newcommand{\dxi}{\:\mathrm{d}\,\xi\:}
\newcommand{\dnu}{\:\mathrm{d}\,\nu\:}
\newcommand{\abs}[1]{\ensuremath{\left|\text{}#1\text{}\right|}}
\newcommand{\norm}[1]{\ensuremath{\left\lVert\text{}#1\text{}\right\rVert}}
\newcommand{\ip}[1]{\ensuremath{\left<\text{} \, #1 \, \text{}\right>}}
\newcommand{\set}[1]{\ensuremath{\left\{\text{}#1\text{}\right\}}}
\DeclareMathOperator{\re}{Re}
\DeclareMathOperator{\sgn}{sgn}
\DeclareMathOperator{\id}{id}
\DeclareMathOperator{\GL}{GL}
\DeclareMathOperator{\Wav}{Wav}
\DeclareMathOperator{\supp}{supp}
\DeclareMathOperator{\vol}{vol}
\DeclareMathOperator*{\argmin}{argmin}
\newtheorem{theorem}{Theorem}[section]
\newtheorem{lemma}[theorem]{Lemma}
\newtheorem{proposition}[theorem]{Proposition}
\newtheorem{definition}[theorem]{Definition}
\newtheorem{corollary}[theorem]{Corollary}
\newtheorem{example}[theorem]{Example}
\newtheorem{remark}[theorem]{Remark}
\newtheorem{assumption}[theorem]{Assumption}
\newenvironment{proof sketch} {\textit{Proof sketch.}}{}
\newenvironment{myenum}
{ \begin{enumerate}
    \setlength{\itemsep}{3pt}
    \setlength{\parskip}{0pt}
    \setlength{\parsep}{0pt}     }
{ \end{enumerate}                  }
\begin{document}

\title[Energy Propagation in Scattering Convolution Networks]
{Energy Propagation in Scattering Convolution Networks Can Be Arbitrarily Slow}
\author{Hartmut F\"uhr, Max Getter}
\email{fuehr@mathga.rwth-aachen.de, getter@mathga.rwth-aachen.de}
\address{Chair for Geometry and Analysis, RWTH Aachen University, D-52062 Aachen, Germany}

\maketitle

\vspace*{-2em}

%% Abstract
\begin{abstract}
  %% Text of abstract
  We analyze energy decay for deep convolutional neural networks employed as feature extractors, including Mallat's wavelet scattering transform. For time-frequency scattering transforms based on Gabor filters, previous work has established that energy decay is exponential for arbitrary square-integrable input signals. In contrast, our main results allow proving that this is false for wavelet scattering in arbitrary dimensions. Specifically, we show that the energy decay of wavelet and wavelet-like scattering transforms acting on generic square-integrable signals can be arbitrarily slow. Importantly, this slow decay behavior holds for dense subsets of $L^2(\mathbb{R}^d)$, indicating that rapid energy decay is generally an unstable property of signals. We complement these findings with positive results that allow us to infer fast (up to exponential) energy decay for generalized Sobolev spaces tailored to the frequency localization of the underlying filter bank. Both negative and positive results highlight that energy decay in scattering networks critically depends on the interplay between the respective frequency localizations of both the signal and the filters used. 
  \end{abstract}
  \vspace*{1em}
  
  \noindent \textbf{\small Keywords:}{\small {}  Deep convolutional neural networks; scattering transform; energy propagation; filter banks; wavelets}{\small \par}
  
  \noindent \textbf{\small 2020 Mathematics Subject Classification:}{\small {} 42C15; 68T07; 42C40; 42B35}{\small \par}
  
  \begingroup
  \renewcommand\thefootnote{}\footnotetext{%
  	\textcopyright\ 2025. Author-created version of an article published in
  	\emph{Applied and Computational Harmonic Analysis} (Open Access, CC BY 4.0).
  	DOI: https://doi.org/10.1016/j.acha.2025.101790. Minor change vs. published version: added one bibliographic reference.
  }
  \addtocounter{footnote}{-1}
  \endgroup
  
  %============================================================
  \section{Introduction}
  %============================================================
  
  %-------------------------------------------------
  \subsection{Motivation and related work}\label{sec: Motivation and related work}
  %-------------------------------------------------
  
  Up to this day, the tremendous success of convolutional neural networks (CNNs) in computer vision tasks \cite{zhao2024review} like image classification is only partially understood. This outstanding performance of CNNs is generally believed to be due to their ability to capture information at multiple scales through the use of convolutions and subsequently aggregating the information observed at these scales by means of further operations such as pooling. Consequently, CNNs are seemingly able to extract semantic content from images while discarding irrelevant information. However, the available rigorous analytical explanations for their success are still incomplete.
  
  In his pioneering work \cite{mallat2012group}, Stéphane Mallat introduced the 
  scattering transform, a nonlinear operator that cascades convolutions followed by the modulus non-linearity. The scattering transform can be viewed as an infinitely deep and infinitely wide CNN with the modulus as non-linearity, predetermined filters, and no pooling \cite{wiatowski2017mathematical}. Due to the use of hand-crafted filters based on a wavelet construction, there is no need to train the network. The windowed scattering transform was extended in \cite{wiatowski2017mathematical} to allow for the use of other (optionally learned) filters that are not wavelet-generated, with potentially different filters for different layers, other non-linearities than the modulus, and pooling operations between layers. The present work studies the scattering transform in a more classical fashion, with, for simplicity of exposition, identical (not necessarily wavelet-generated) filters across network layers, the modulus as non-linearity, and no pooling. 
  
  Despite the absence of a training phase and pooling, Mallat-type scattering networks are still performing comparably well to other state-of-the-art models, even after more than a decade of progress in machine learning since their invention (e.g., \cite{su2023wavelets,zhao20233d}).  
  They are used in various applications such as vision and audio tasks, quantum chemistry, medicine, astronomy, and manifold learning \cite{anden2015joint,chew2024geometric,eickenberg2018solid,gao2019geometric,hirn2017quantum,sinz2020wavelet,tolley2024wavelet,tschannen2016heart}. In many of these, the scattering transform (or a variant thereof) is employed in the preprocessing phase of the input data in that it acts as a feature extractor, which is supposed to improve the overall performance of the machine learning model.
  
  Such scattering networks satisfy many of the nice properties that are generally believed to be essential in a feature extractor \cite{mallat2012group,mallat2016understanding,wiatowski2017mathematical,nicola2023stability}. 
  One of these properties, which is of particular importance both from a practical as well as a theoretical point of view, is the question of fast energy propagation across the network layers: Under mild assumptions on the filters, the energy of any input signal 
  decomposes, for every $N \in \N$, into the aggregated energy that is contained in the first $N$ layers of the scattering network 
  and some energy remainder (see Proposition \ref{prop: Energy decomposition and norm-preservation of S} for details). Thus, the scattering transform preserves the energy of an input signal if and only if the corresponding energy remainder converges to zero as $N\to \infty$. In this sense, all of the original information about the input signal is then still contained in the features generated by the scattering network. In practice, however, only finitely many features of the infinite scattering network can be computed. When restricting the network to finitely many features of the first $N$ layers, the corresponding energy remainder is a lower bound for the loss of information. Hence, fast decay of this quantity is desirable.
  
  Energy conservation in the above sense was first established in \cite[Theorem 2.6]{mallat2012group} for wavelet-generated filter banks under a rather restrictive and technical admissibility assumption. Under less restrictive assumptions on the wavelets, but only in dimension $d=1$, the decay of the energy remainder was shown \cite[Theorem 3.1]{waldspurger2017exponential} to be at least of exponential order $\mathcal{O}(\alpha^{N})$ for Sobolev functions with some \textit{unspecified}, signal-independent $\alpha \in (0,1)$. In recent years, quantitative results in a similar fashion were also derived for other types of filter banks. Energy decay was proven (in arbitrary dimension) to be at least of exponential order $\mathcal{O}(\alpha^{N})$ for all finite-energy input signals, with some unspecified, signal-independent $\alpha \in (0,1)$, if the filters of the underlying scattering network form a so-called uniform covering frame \cite[Proposition 3.1]{czaja2019analysis}. In particular, this class of filter banks includes certain Weyl-Heisenberg (Gabor) frames \cite[Proposition 2.3]{czaja2019analysis}. However, the uniform covering property requires the Fourier supports of the filters to be uniformly bounded, so that a broad range of commonly used filter banks in signal processing (e.g., wavelet-generated filter banks) is not addressed by this result. 
  
  With refined estimates, but based on an idea similar to that of \cite[Theorem 3.1]{waldspurger2017exponential}, energy decay was proven to be at least of exponential order $\mathcal{O}(\alpha^{N})$ for Sobolev functions if the filter bank is generated by a \textit{bandlimited} wavelet in dimension $d=1$, with a concretely specified decay factor $\alpha$ that depends on the bandwidth of the generator \cite[Theorem 2]{wiatowski2017energy}, \cite[Theorem 3.1]{wiatowski2017topology}. If the filters do not necessarily have an underlying structure as in the case of wavelet-generated filters, but instead satisfy certain mild analyticity assumptions in arbitrary dimension $d \in \N$, energy decay was shown to be at least of polynomial order $\mathcal{O}(N^{-m_d})$ for Sobolev functions, where $m_d \in (0,1]$ and $m_d \to 0$ as $d \to \infty$ \cite[Theorem 1]{wiatowski2017energy}. 
  
  So far, an important question has remained unanswered: 
  
  \textit{Does exponential energy decay hold for the wavelet scattering transform, globally on $L^2(\R^d)$?}
  
  \subsection{Contributions}
  
  Our first main result, Theorem \ref{thm: Arbitrarily slow convergence of W_N}, provides a negative answer to the question of global exponential energy decay for the wavelet scattering transform. In fact, we show that energy decay can even become arbitrarily slow over $L^2(\R^d)$ when employing filter banks that have an underlying structure similar to wavelet-generated filter banks (including the latter). It turns out that, for given decay rate, the set of signals that do not obey this rate in terms of scattering energy decay is dense in $L^2(\R^d)$. Informally, energy decay can thus be considered instable over $L^2(\R^d)$ for these types of scattering networks. 
  
  We complement these negative findings with our second main result, Theorem \ref{thm: Convergence rates for W_N(f)}, which is of positive nature. By exploiting the interplay between Fourier decay of the input signal and frequency concentration of the underlying filters of the scattering network, we provide explicit upper bounds on the convergence rates of the energy remainder for large filter-dependent subclasses of $L^2(\R^d)$, thereby generalizing, unifying, and partially improving the findings of the previous works \cite{czaja2019analysis,waldspurger2017exponential,wiatowski2017energy,wiatowski2017topology}.
  
  In view of selecting wavelet-generated filters for the scattering network (as is the case in many applications), these results have two main implications:
  \begin{itemize}
    \item Fast energy propagation in wavelet scattering networks can only be guaranteed if a priori knowledge about the global Fourier decay of the input data is available, with sufficient conditions specified by Corollary \ref{cor: Wavelet decay rates}.
    \item The set of those signals in $L^2(\R^d)$, for which the mixed $(\ell^1,L^2(\R^d))$ scattering norm considered in \cite[Section 2.5]{mallat2012group} is infinite, is dense in $L^2(\R^d)$. Results in the spirit of \cite[Theorem 2.12]{mallat2012group}, which guarantee stability of the windowed scattering transform under the action of small diffeomorphisms for signals with finite mixed $(\ell^1,L^2(\R^d))$ scattering norm, therefore do not apply to such (in this sense) ill-behaved input signals. On the other hand, finiteness of this mixed scattering norm can be guaranteed by means of our positive results, e.g., for all $f \in H^s_{\log}(\R^d)$, $s>1$, which generalizes \cite[Proposition 2.4]{nicola2023stability} to arbitrary dimension $d \in \N$ if the generating wavelet is bandlimited.
  \end{itemize}

  %-------------------------------------------------
  \subsection{Structure of the paper}
  %-------------------------------------------------
  
  We begin our exposition in Section \ref{sec: brief review} by reviewing the windowed scattering transform and clarifying our base assumptions on the underlying filters of the scattering network. 
  
  Our results in Section \ref{sec: Slow Scattering Propagation} concern filter banks with an inherent structure similar to those generated by wavelets. The starting point of our analysis is Lemma \ref{lem: approx additivity}, which states an approximate version of super-additivity of the energy remainder for signals that are nearly separated in frequency domain. We build on this result by constructing an adversarial signal by means of an infinite series of appropriately chosen, nearly pairwise separated (in frequency domain) signals.
  
  In Section \ref{sec: Convergence rates for scattering propagation}, we derive upper bounds on the decay rate of the energy remainder, which are based on the interplay between Fourier decay of the input signal and the frequency concentration of the filters. Our results apply to a broad range of filter banks and their corresponding signal classes. The filters must satisfy a mild analyticity assumption, which can be seen as a refinement of the setting considered in \cite[Assumption 1]{wiatowski2017energy}. 
  
  Finally, we illustrate our results in Section \ref{sec: Applications} by outlining the consequences for a certain class of filters with uniformly bounded bandwidth, and for wavelet-generated filters. In doing so, we recover some existing results from the literature, and we improve and generalize others. These results are of considerable independent interest, which is further enhanced by the contrast to the negative findings from Section \ref{sec: Slow Scattering Propagation}.
  
  %-------------------------------------------------
  \subsection{Notation}\label{sec: Notation}
  %-------------------------------------------------
  
  This paper has a significant amount of notation, which we summarize here categorically. Table \ref{tab: notation} provides an overview thereof.
  
  \begin{table}[h!]
  	\centering
  	\caption{Overview of notation used in the paper.}
  	\label{tab: notation}
  	\resizebox{\textwidth}{!}{
  		\begin{tabular}{@{}lll@{}}
  			\toprule
  			\textbf{Notation} & \textbf{Meaning} & \textbf{Reference} \\ \midrule
  			$d$ & Dimension of the domain of the input signal &  \\
  			$S_{\geq x}$ & %Subset of $S$ consisting of all vectors that satisfy the inequality component-wise 
  			Vectors in $S\subseteq \R^d$ that satisfy the inequality component-wise for fixed $x \in \R^d$ & Sec. \ref{sec: Notation} \\
  			$L^p(\R^d)$ & Lebesgue space of complex-valued functions with exponent $p$ & Sec. \ref{sec: Notation} \\ 
  			%$\ell^p(\cdot)$ & Lebesgue space with exponent $p$ in case of counting measure & \\
  			$\ell^p(\Lambda;\h)=\ell^p(\h)$ & Space of %Hilbert-space-$\h$-valued square-summable sequences with index set $\Lambda$ 
  			$p$-summable maps from a countable set $\Lambda$ into a Hilbert space $\h$  & Eq. \eqref{def: vector-valued sequence space} \\
  			$\ip{\cdot,\cdot}$ & Standard inner product on $\C^d$, $L^2(\R^d)$, or $\ell^2(\h)$ & Sec. \ref{sec: Notation}\\
  			$\norm{\cdot}_p$ & Norm on $L^p(\R^d)$ or $\ell^p(\h)$ & Sec. \ref{sec: Notation}\\
  			$\GL_d(\R)$ & General linear group in dimension $d$ over the field $\R$ & \\
  			$O_d(\R)$ & Orthogonal group in dimension $d$ over the field $\R$ & \\
  			$I_d$ & Identity matrix in dimension $d$ & \\
  			$\sigma_{\text{min}}(A),\sigma_{\text{max}}(A)$ & A smallest (respectively largest) singular value of a matrix $A$ & \\
  			$B_r(x)$ & Open Euclidean ball with center $x$ and radius $r$ & \\
  			$\s_{r,R}$ & Closed spherical shell in $\R^d$ with center $0$, inner radius $r$, and outer radius $R$ & Sec. \ref{sec: Notation} \\
  			$C^\rho$, $C^\rho_{A}$ & Closed cone with tip at the origin, opening angle $\rho$, and orientation $A \in O_d(\R)$ & Eq. \eqref{def: cone} \\
  			$\mathds{1}_M$ & Indicator function on the set $M$ & \\
  			$\vol(M)$ & $d$-dimensional Lebesgue-measure of the set $M$ & \\
  			$\mathbb{S}^{d-1}$ & $d-1$-dimensional sphere in $d$-dimensional Euclidean space & Sec. \ref{sec: Notation} \\
  			$W_N \in \mathcal{O}(E_N)$ & Bachmann-Landau asymptotic big O notation for sequences $E$, $W$ & Sec. \ref{sec: Notation}\\
  			$\supp(f)$ & Support of a function $f$ & Sec. \ref{sec: Notation}\\
  			$D_A^pf$ & $L^p$-normalized dilation of a function $f$ by $A\in \GL_d(\R)$, $p \in \{1,2\}$ & Sec. \ref{sec: Notation} \\
  			$f^*$ & Reflection and conjugation of a function $f$ & Sec. \ref{sec: Notation}\\
  			$\mathcal{F}f=\widehat{f}$ & Fourier transform of a function $f$ with normalization $2\pi i$ & Eq. \eqref{def: Fourier transform} \\
  			$\mathcal{F}L_\omega^2(\R^d)$ & Sobolev space with weight $\omega$ & Eq. \eqref{def: weighted Sobolev space} \\
  			$H^s(\R^d)$ & Fractional Sobolev space of class regularity $s$ & Sec. \ref{sec: Notation} \\
  			$H^s_{\log}(\R^d)$ & Fractional Log-Sobolev space of class regularity $s$ & Sec. \ref{sec: Notation} \\
  			$\mathfrak{F}=\{\chi\} \cup \Psi$ & Semi-discrete Parseval frame with low-pass filter $\chi$ & Sec. \ref{sec: brief review} \\
  			$d_\psi$ & Diameter of the spectral support of a filter $\psi$ & Eq. \eqref{def: diameter of spectral support of a filter}\\
  			$D_\Psi$ & Supremum of the diameters of the spectral supports of filters in $\Psi$ & Def. \ref{def: uniform frequency concentration}\\
  			$\mathcal{D}_\omega(\Psi;L^2(\R^d))$ & Generalized Sobolev space with weight $\omega$ and filters $\Psi$ & Eq. \eqref{def: generalized Sobolev space} \\
  			$Y_E$ & Signals whose energy remainder decays at least at the rate of $E$ & Cor. \ref{cor: arbitrarily slow decay if delta is positive} \\
  			$U[\psi]$ & Single step scattering propagator with filter $\psi$ & Eq. \eqref{def: single step scattering propagator} \\
  			$U[p]$ & Scattering propagator along filter path $p$ & Eq. \eqref{def: scattering propagator} \\
  			$S[p;\chi]$ & Windowed scattering propagator along path $p$ and with window $\chi$ & Eq. \eqref{def: windowed scattering propagator} \\
  			$U[P]$, $S[P;\chi]$ & Collection of (windowed) scattering propagators along paths in $P$ & Eq. \eqref{def: collection of scattering propagators}, \eqref{def: collection of windowed scattering propagators} \\
  			$\s[\mathfrak{F}]$ & Windowed scattering transform with filters $\mathfrak{F}$ & Eq. \eqref{def: windowed scattering transform} \\
  			$W_N[\Phi](f)$ & Energy remainder associated with depth $N$, filters $\Phi\subseteq \mathfrak{F}$, and signal $f$ & Eq. \eqref{def: energy remainder}\\
  			$W_N(f)$ & Quantity of main interest; $W_N(f):=W_N[\Psi](f)$ & Eq. \eqref{def: energy remainder} \\
  			\bottomrule
  	\end{tabular}}
  \end{table}

  Fix a dimension $d \in \N$. The sets $\N^d\subseteq\N_0^d\subseteq\Z^d\subseteq \R^d \subseteq \C^d$ have their usual meanings. For $S\subseteq \R^d$ and $x \in \R^d$, we define $S_{\geq x}:=\{s \in S ~|~ s_k \geq x_k ~ \text{ for all } k \in \{1,\ldots,d\}\}$. We denote the Euclidean inner product of two vectors $z,w \in \C^d$ by $\ip{z,w}:=\sum_{k=1}^d z_k \overline{w_k}$, with associated norm $\norm{\cdot}_2$, and the respective open ball in $\R^d$ with center $x \in \R^d$ and radius $r>0$ by $B_r(x)\subseteq \R^d$. Further, we write $\s_{r,R}:=\overline{B_R(0)}\setminus B_r(0)$ for the closed spherical shell with center at the origin, inner radius $r>0$, and outer radius $R>0$. By $\mathds{1}_M$, we refer to the indicator function of a set $M \subseteq \R^d$. If $M$ is Lebesgue-measurable, its measure is denoted by $\vol(M)\in [0,\infty]$. The sphere in $\R^d$ is the set $\mathbb{S}^{d-1}:=\{x \in \R^d ~|~ \norm{x}_2=1\}$.
  
  As usual, $\GL_d(\R)$ is the set of $d\times d$ invertible matrices with entries from $\R$, and $O_d(\R)$ is the subgroup of orthogonal matrices. It is well known that any complex matrix $A$ has a singular value decomposition, where we denote the value of a smallest (respectively largest) singular value of $A$ by $\sigma_{\text{min}}(A)$ (respectively $\sigma_{\text{max}}(A)$). We denote the identity matrix in dimension $d$ by $I_d$. 
  
  For sequences $E,W: \N \to \R$, we write $W_N \in \mathcal{O}(E_N)$ if there exist $C>0$ and $N_0 \in \N$ such that $W_N \leq C \cdot E_N$ holds for all $N \in \N_{\geq N_0}$. We denote the support of a function $f: \R^d \to \C$ by $\supp(f):=\overline{\set{x \in \R^d ~|~ f(x)\neq 0}}$, where the closure is taken with respect to standard Euclidean topology on $\R^d$. For $A\in\GL_d(\R)$, $p \in \{1,2\}$, we also define $D_A^pf:=|\det(A)|^{\nicefrac{1}{p}} f(A \:\cdot)$. 
  Moreover, we set $f^*:=\overline{f(-\:\cdot)}$. By $L^p(\R^d)$, for $p\in [1,\infty)$, we refer to the standard Lebesgue space, i.e., the set of Lebesgue-measurable functions $f:\R^d \to \C$ such that \[\norm{f}_p^p:=\int_{\R^d} |f(x)|^p \dx<\infty,\] where functions that agree almost everywhere (a.e.) with respect to the Lebesgue-measure are identified. With the same identification, the space $L^\infty(\R^d)$ is the set of Lebesgue-measurable functions $f:\R^d \to \C$ that are bounded almost everywhere, i.e., $\norm{f}_\infty:=\inf\{c\in \R_{\geq 0}: |f(x)| \leq c \text{ a.e. } x \in \R^d\}<\infty$. The inner product of two functions $f,g$ from the Hilbert space $L^2(\R^d)$ is simply denoted by \[\ip{f,g}:=\int_{\R^d} f(x)\cdot \overline{g(x)} \dx.\]
  
  For $f \in L^1(\R^d)$, we define the Fourier transform of $f$ by
  \begin{align}\label{def: Fourier transform}
    \mathcal{F}f(\cdot):=\widehat{f}(\cdot):=\int_{\R^d} f(x) ~ e^{-2\pi i \ip{x, \cdot}} \dx.
  \end{align}
  The Fourier transform extends uniquely to a unitary automorphism of $L^2(\R^d)$ by this choice of normalization.
  Its inverse is the unique operator extension of the inverse Fourier transform given by 
  \[\mathcal{F}^{-1}f(\cdot):=\widehat{f}(-\:\cdot)=\int_{\R^d} f(x) ~ e^{2\pi i \ip{x, \cdot}} \dx.\]
  
  If $\omega:(0,\infty)\to (0,\infty)$ is a nondecreasing function, we define
  \begin{align}\label{def: weighted Sobolev space}
  \mathcal{F}L_\omega^2(\R^d):=\set{f \in L^2(\R^d)~\middle|~  \omega(\norm{\cdot}_2) \widehat{f} \in L^2(\R^d)}.
  \end{align}
  For $s>0$, setting $\omega_s(t):=(1+t^2)^{\frac{s}{2}}$, the fractional Sobolev space of class regularity $s$ is $H^s(\R^d):=\mathcal{F}L^2_{\omega_s}(\R^d)$. We define the $s$-fractional Log-Sobolev space by $H^s_{\log}(\R^d):=\mathcal{F}L^2_{\omega_{s,\log}}(\R^d)$, where $\omega_{s,\log}(t):=\ln^s(e+t)$. Furthermore, let $S(\R^d)$ denote the Schwartz space on $\R^d$. 
  For a Hilbert space $\h$, a countable set $\Lambda$, and $p \in [1,\infty)$, we define
  \begin{align}\label{def: vector-valued sequence space}
    \ell^p(\h):=\ell^p(\Lambda;\h):=\left\{f=(f_\lambda)_{\lambda \in \Lambda} \in \h^\Lambda~\middle|~ \sum_{\lambda \in \Lambda} \norm{f_\lambda}_{\h}^p<\infty\right\}.
  \end{align}
  If $p=2$, this is a Hilbert space, equipped with the inner product given by $\ip{f,g}:=\sum_{\lambda \in \Lambda} \ip{f_\lambda,g_\lambda}$ for $f,g \in \ell^2(\h)$ (with unconditional convergence of the series).
  
  %-------------------------------------------------
  \subsection{A brief review of the windowed scattering transform.}\label{sec: brief review}
  %-------------------------------------------------
  
  A scattering network computes cascades of convolutions followed by the application of the modulus as non-linearity, with no pooling between the layers. Given any $\psi \in L^1(\R^d)\cap L^2(\R^d)$, we denote by 
  \begin{align}\label{def: single step scattering propagator}
    U[\psi]:L^2(\R^d)\to L^2(\R^d), \quad f \mapsto |f*\psi|
  \end{align}
  the \textit{single step scattering propagator} associated to filtering with the function $\psi$ (which in this context is thus commonly called a \textit{filter}). Moreover, we refer to an ordered sequence $p=(\psi_1,\ldots,\psi_N)\in (L^1(\R^d)\cap L^2(\R^d))^N$ 
  as a \textit{path} of length $N \in \N$. Iterating the single step scattering propagators along this path yields the \textit{scattering propagator}
  \begin{align}\label{def: scattering propagator}
    U[p]:L^2(\R^d)\to L^2(\R^d), \quad f \mapsto U[\psi_N]\ldots U[\psi_1]f = |\ldots|f*\psi_1|\ldots*\psi_N|.
  \end{align}
  For notational convenience, we define the scattering propagator along the empty path $e$ (the unique path of length zero) to be the identity operator $U[e]:=\id_{L^2(\R^d)}$. 
  
  A designated function $\chi \in L^1(\R^d)\cap L^2(\R^d)$ induces a \textit{windowed scattering propagator} along a path $p$ 
  of length $N \in \N$ and window $\chi$ (referred to as output-generating filter), given by 
  \begin{align}\label{def: windowed scattering propagator}
    S[p;\chi]:L^2(\R^d)\to L^2(\R^d), \quad f \mapsto (U[p]f)*\chi = |\ldots|f*\psi_1|\ldots*\psi_N|*\chi.
  \end{align}
  By Young's convolution inequality, the operators $U[p]$ and $S[p;\chi]$ are well-defined.  
   
  Our main objective is to study the properties of the operators (Fig.~\ref{fig: restricted network architecture})
  \begin{align}\label{def: collection of scattering propagators}
    U[P]: L^2(\R^d) \to \ell^2(P;L^2(\R^d)), \quad f \mapsto \left(U[p]f\right)_{p \in P}
  \end{align}
  and
  \begin{align}\label{def: collection of windowed scattering propagators}
    S[P;\chi]: L^2(\R^d) \to \ell^2(P;L^2(\R^d)), \quad f \mapsto \left(S[p;\chi]f\right)_{p \in P}
  \end{align}
  for certain path sets $P$. 
  In particular, this requires the operators to be well-defined in the sense that, for any $f \in L^2(\R^d)$, the series $\sum_{p \in P} \norm{U[p]f}_2^2$ converges unconditionally. In this respect, let us now state our main requirement on the filters, which are the basic building blocks of the windowed scattering transform. 
  
  \begin{figure}[t!]
    \centering
    \captionsetup{width=.9\linewidth}
    \includegraphics[width=\textwidth]{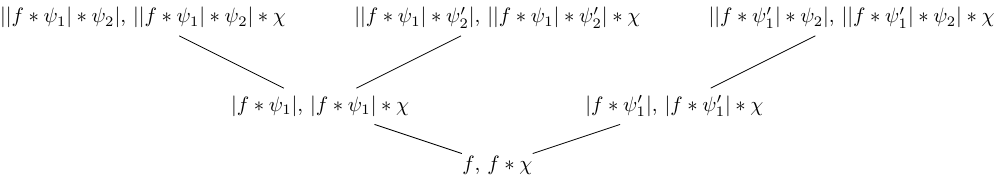}
    \caption{Illustration of the operators $U[P],S[P;\chi]$ for the path set $P=\{e,\psi_1, \psi_1^\prime, (\psi_1,\psi_2),(\psi_1,\psi_2^\prime),(\psi_1^\prime,\psi_2)\}$ and output-generating filter $\chi$.}
    \label{fig: restricted network architecture}
  \end{figure}
  
  Throughout this paper, we assume that $\mathfrak{F}:=\{\chi\} \cup \Psi \subset L^1(\R^d) \cap L^2(\R^d)$ is a countably infinite collection of functions so that the corresponding filter transform
  \begin{align}\label{filter transform}
    L^2(\R^d) \to \ell^2(\mathfrak{F};L^2(\R^d)), \ f \mapsto (f*\chi) \cup \left(f*\psi\right)_{\psi \in \Psi}
  \end{align}
  is an isometry. This is precisely the case if $\mathfrak{F}$ forms a semi-discrete Parseval frame \cite[Section 5.1.5]{mallat1999wavelet}, i.e., if for all $f \in L^2(\R^d)$,
  \begin{align}\label{ass: Parseval Frame condition}
    \norm{f*\chi}_2^2 + \sum_{\psi \in \Psi} \norm{f*\psi}_2^2 = \norm{f}_2^2.
  \end{align}
  We frequently use the fact (which follows directly from Parseval's theorem and the convolution theorem) that $\mathfrak{F}$ forms a semi-discrete Parseval frame if and only if $\mathfrak{F}$ satisfies the Littlewood-Paley condition
  \begin{align}\label{ass: Littlewood-Paley condition}
    |\widehat{\chi}(\xi)|^2+\sum_{\psi \in \Psi} |\widehat{\psi}(\xi)|^2=1 \quad \text{a.e. } \xi \in \R^d.
  \end{align}
  If any of the latter equivalent conditions holds, then every $f \in L^2(\R^d)$ admits a decomposition 
  \begin{align}
    f= f*\chi*\chi^* + \sum_{\psi \in \Psi} f*\psi*\psi^*.
  \end{align}
  
  \begin{remark}
    To simplify the notation, we do not assign the filters $\Psi$ an explicit (countably infinite) index set, unless we work in a more concrete setting. This precludes the use of multiple occurrences of the same filter, which is often allowed in the definition of the scattering transform. In principle, this can be accommodated, primarily at notational cost, by extending the subsequent discussion to multisets. To us the benefit of such an extension seems quite limited, which is why we have refrained from making the necessary adjustments. 
  \end{remark}
  
  We mainly consider the operators $U[P]$ and $S[P;\chi]$ on nonempty subsets $P$ of the set of all finite length scattering paths $\mathcal{P}_\Psi:=\bigcup_{N=0}^\infty \Psi^N$, where we set $\Psi^0:=\{e\}$. The \textit{windowed scattering transform associated with the filters $\mathfrak{F}$} is the operator
  \begin{align}\label{def: windowed scattering transform}
    \s[\mathfrak{F}]:=S[\mathcal{P}_\Psi;\chi]: L^2(\R^d) \to \ell^2(\mathcal{P}_\Psi;L^2(\R^d)), \quad f \mapsto \left(S[p;\chi]f\right)_{p \in \mathcal{P}_\Psi}.
  \end{align}
  In the interest of generality, we do not make further assumptions on the Parseval frame $\mathfrak{F}$ for now. Even in this very general setting, the windowed scattering transform $\s[\mathfrak{F}]$ (Fig.~\ref{fig: network architecture}) is always a well-defined operator into the Hilbert space $\ell^2(\mathcal{P}_\Psi;L^2(\R^d))$. We address this in more detail in Proposition \ref{prop: Energy decomposition and norm-preservation of S}.
  
  \begin{figure}[t!]
    \centering
    \includegraphics[width=\textwidth]{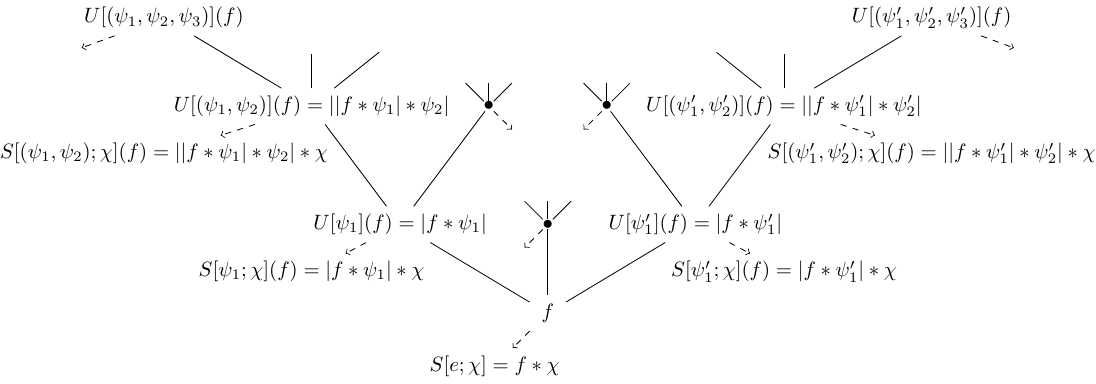}
    \caption{Indication of the architecture of a scattering network as described above, with filters $\psi_N,\psi_N^\prime \in \Psi$ corresponding to the $N$th network layer, $N \in \{1,2,3\}$. The function $\chi$ is the output-generating filter, which is (in our setup) the same across all layers. The residual energy $W_N(f)$ is the aggregated energy of the $N$th horizontal layer of the network, e.g., $W_2(f)=W_2[\Psi](f)=\norm{|f*\psi_1|*\psi_2}_2^2+\Vert|f*\psi_1^\prime|*\psi_2^\prime\Vert_2^2+\cdots$.}
    \label{fig: network architecture}
  \end{figure}
  
  Typically, the filters are concentrated in different areas in frequency domain, with $\chi$ taking on the special role of a low-pass filter and the remaining filters $\Psi$ being of high-pass nature. For the sake of concreteness and later use, let us give one particular example of an admissible collection $\mathfrak{F}$ in dimension $d=1$, where in this case $\mathfrak{F}$ is generated from a bandlimited wavelet.
  
  \begin{example}\label{ex: wavelet filters in dimension 1}
    Let $\psi, \phi \in L^1(\R)\cap L^2(\R)$ satisfy $\supp(\widehat{\psi}) \subseteq [\frac{1}{2},2]$ and 
    \begin{align*}
      |\widehat{\phi}(\xi)|^2+\sum_{j=1}^\infty \left(|\widehat{\psi}(2^{-j}\cdot \xi)|^2+|\widehat{\psi}(-2^{-j}\cdot \xi)|^2\right)=1, \quad \text{a.e. } \xi \in \R.
    \end{align*}
    Such a pair of functions arises, e.g., from the construction of an orthogonal wavelet basis, with father wavelet (scaling function) $\phi$ and mother wavelet $\psi$. Then, $\mathfrak{F}:=\{\chi\}\cup \{\psi_j ~|~ j \in \Z\setminus \{0\}\}$ is a semi-discrete Parseval frame, where $\chi:=\phi$ and
    \begin{align*}
      \psi_j:=
      \begin{cases}
        D^1_{2^j}\psi=2^j \psi(2^j\:\cdot) & \text{for } j \in \N \\
        D^1_{2^{|j|}}\psi(-\:\cdot)=2^{|j|} \psi(-2^{|j|}\:\cdot) &\text{for } j \in \Z_{<0}
      \end{cases}.
    \end{align*}
    The filter transform is in this case the associated wavelet transform.
  \end{example}
  
  The following proposition, which was first stated in (the proof of) \cite[Theorem 2.6]{mallat2012group} and later generalized in \cite[Proposition 1]{wiatowski2017mathematical} and \cite[Proposition 1]{wiatowski2017energy}, forms the basis for all further analysis in this paper.
  
  \begin{proposition}\label{prop: Energy decomposition and norm-preservation of S}
    Every $f \in L^2(\R^d)$ satisfies the energy decomposition, for all $N \in \N_0$, 
    \begin{align}\label{eq: energy decomp}
      \sum_{n=0}^{N-1} \norm{S[\Psi^n;\chi]f}_{\ell^2(\Psi^n;L^2(\R^d))}^2 + \norm{U[\Psi^N]f}_{\ell^2(\Psi^N;L^2(\R^d))}^2=\norm{f}_2^2.
    \end{align}
    Consequently, 
    \[\norm{\s[\mathfrak{F}]f}_{\ell^2(\mathcal{P}_\Psi;L^2(\R^d))}\leq \norm{f}_2,\]
    which ensures the well-definedness of the operator $\s[\mathfrak{F}]: L^2(\R^d) \to \ell^2(\mathcal{P}_\Psi;L^2(\R^d))$.
  
    Moreover,
    \begin{align}\label{eq: lim W_N(f) to 0}
      \lim_{N \to \infty} \norm{U[\Psi^N]f}_{\ell^2(\Psi^N;L^2(\R^d))}^2=0
    \end{align}
    holds if and only if  
    \begin{align}\label{eq: norm-preservation of S if W_N(f) to 0}
      \norm{\s[\mathfrak{F}]f}_{\ell^2(\mathcal{P}_\Psi;L^2(\R^d))}^2= \norm{f}_2^2.
    \end{align}
  \end{proposition}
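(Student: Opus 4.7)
The plan is to derive the energy decomposition \eqref{eq: energy decomp} as a telescoping consequence of the Parseval frame identity \eqref{ass: Parseval Frame condition} applied layer by layer, and then to read off well-definedness and the equivalence from this identity.

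First I would fix $n \in \N_0$ and a path $p \in \Psi^n$ and apply \eqref{ass: Parseval Frame condition} to the $L^2$-function $U[p]f$, obtaining
\[
\norm{U[p]f}_2^2 = \norm{U[p]f * \chi}_2^2 + \sum_{\psi \in \Psi} \norm{U[p]f * \psi}_2^2.
\]
By definition $U[p]f * \chi = S[p;\chi]f$, and because the modulus is a pointwise $L^2$-isometry one has $\norm{U[p]f * \psi}_2 = \norm{\,|U[p]f * \psi|\,}_2 = \norm{U[p']f}_2$, where $p' \in \Psi^{n+1}$ is the path obtained by appending $\psi$ to $p$. Summing over $p \in \Psi^n$ and invoking the obvious bijection $\Psi^n \times \Psi \cong \Psi^{n+1}$ yields the one-step recursion
\[
\norm{U[\Psi^n]f}_{\ell^2(\Psi^n;L^2(\R^d))}^2 = \norm{S[\Psi^n;\chi]f}_{\ell^2(\Psi^n;L^2(\R^d))}^2 + \norm{U[\Psi^{n+1}]f}_{\ell^2(\Psi^{n+1};L^2(\R^d))}^2.
\]
A straightforward induction on $N$, starting from $U[\Psi^0]f = f$ with $\norm{f}_2^2$ on the right, telescopes this recursion into \eqref{eq: energy decomp}.

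Since each summand on the left of \eqref{eq: energy decomp} is nonnegative, the partial sums $\sum_{n=0}^{N-1} \norm{S[\Psi^n;\chi]f}_{\ell^2(\Psi^n;L^2(\R^d))}^2$ are bounded by $\norm{f}_2^2$ and therefore absolutely (hence unconditionally) summable over $\p_\Psi = \bigsqcup_{n \geq 0} \Psi^n$. This gives both the bound $\norm{\s[\mathfrak{F}]f}_{\ell^2(\p_\Psi;L^2(\R^d))} \leq \norm{f}_2$ and the well-definedness of $\s[\mathfrak{F}]$. Moreover, the recursion shows that $\bigl(\norm{U[\Psi^N]f}_{\ell^2(\Psi^N;L^2(\R^d))}^2\bigr)_{N \in \N_0}$ is nonincreasing in $N$ and bounded below by zero, hence convergent; passing to the limit $N \to \infty$ in \eqref{eq: energy decomp} produces
\[
\norm{\s[\mathfrak{F}]f}_{\ell^2(\p_\Psi;L^2(\R^d))}^2 = \norm{f}_2^2 - \lim_{N \to \infty} \norm{U[\Psi^N]f}_{\ell^2(\Psi^N;L^2(\R^d))}^2,
\]
from which the equivalence of \eqref{eq: lim W_N(f) to 0} and \eqref{eq: norm-preservation of S if W_N(f) to 0} is immediate.

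The substantive inputs are just the Parseval identity \eqref{ass: Parseval Frame condition} and the fact that the modulus preserves pointwise (hence $L^2$) magnitudes; there is no real obstacle beyond the bookkeeping required to pass from a per-path frame identity to a layer-wise recursion, in particular the reindexing $\Psi^n \times \Psi \cong \Psi^{n+1}$, which is routine.
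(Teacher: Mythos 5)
Your proof is correct, and it is precisely the standard telescoping argument: the paper itself omits a proof of this proposition, citing \cite{mallat2012group} and \cite{wiatowski2017energy}, whose arguments proceed exactly as you do — apply the Parseval identity \eqref{ass: Parseval Frame condition} to each $U[p]f$, reindex via $\Psi^n\times\Psi\cong\Psi^{n+1}$, telescope, and pass to the (monotone) limit. No gaps; the only point worth making explicit is that all terms are nonnegative, so the layer-wise resummation and the unconditional convergence over $\mathcal{P}_\Psi$ are justified, which you do note.
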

  
  The energy decomposition identity \eqref{eq: energy decomp} quantifies the loss of information about the original input signal $f$ if the scattering network is restricted to its first $N$ layers. Motivated by this important relationship, let us therefore define the quantity, for any given $\emptyset \neq \Phi \subseteq \mathfrak{F}$ and $N \in \N_0$, 
  \begin{align}\label{def: energy remainder}
    W_N[\Phi](f):=\norm{U[\Phi^N]f}_{\ell^2(\Phi^N;L^2(\R^d))}^2=\sum_{p \in \Phi^N} \norm{U[p](f)}_2^2.
  \end{align}
  
  Our main objective in this paper is to study the asymptotic behavior of the quantity $W_N[\Psi](f)$ as $N\to \infty$, which, as we show, distinctively depends on the interplay between the choice of filters $\Psi$ and the input signal $f$. 
  If there is no ambiguity about the underlying filters $\Psi$, we sometimes only write $W_N(f)$ to refer to $W_N[\Psi](f)$.
  
  As already pointed out in Section \ref{sec: Motivation and related work}, there are several sufficient criteria for $\mathfrak{F}$ available that guarantee the asymptotic energy decay \eqref{eq: lim W_N(f) to 0}. More precisely, \cite[Theorem 3.1]{waldspurger2017exponential} and \cite[Theorem 1, Theorem 2]{wiatowski2017energy} state an upper bound on $W_N[\Psi](f)$, for all $f \in L^2(\R^d)$ and $N \in \N_{\geq 2}$, which is of the type
  \begin{align}
    W_N[\Psi](f) \leq \int_{\R^d} |\widehat{f}(\xi)|^2 \cdot K_N(\xi) \dxi
  \end{align}
  for a family of integral kernels $(K_N)_{N \in \N_{\geq 2}}$ (independent of $f$) that satisfy the following properties:
  \begin{itemize}
    \item For all $N \in \N_{\geq 2}$, $K_N$ is continuous.
    \item For all $N \in \N_{\geq 2}$, $0\leq K_{N+1} \leq K_N \leq 1$.
    \item $\lim_{N \to \infty} K_N(\xi)=0$ holds pointwise for all $\xi \in \R^d$.
  \end{itemize}
  By Dini's theorem \cite[Theorem 7.3]{dibenedetto2002real}, this upper integral bound entails, for all $f \in L^2(\R^d)$, the convergence \eqref{eq: lim W_N(f) to 0} (cf. \cite[Proposition 2]{wiatowski2017energy}). 
  In particular, the scattering transform $\s[\mathfrak{F}]$ is norm-preserving in these settings by Proposition \ref{prop: Energy decomposition and norm-preservation of S}. Specifically the assumptions of \cite[Theorem 1]{wiatowski2017energy} allow for some flexibility in the choice of the filters while guaranteeing norm-preservation. Clearly, this implies that the single signal that produces zero-features in every layer by means of the scattering transform is the trivial null-signal $f=0$. 
  
  In order to produce many informative (in the sense of nontrivial) features by means of the scattering transform, one typically chooses an output-generating filter $\chi$ that is of low-pass type. By the Littlewood-Paley condition \eqref{ass: Littlewood-Paley condition}, the remaining filters $\Psi$ are automatically of high-pass nature. Let us briefly illustrate how informativeness of $\s[\mathfrak{F}]$ can be achieved by imposing a low-pass condition on $\chi$: Let $0\neq g,\chi \in L^1(\R^d)\cap L^2(\R^d)$ and suppose that $\widehat{\chi}(0)\neq 0$. By continuity of $\widehat{\chi}$ and $\mathcal{F}(|g|)$, and since $\mathcal{F}(|g|)(0)=\norm{g}_1 >0$, there exist $\varepsilon,\delta>0$ such that $|\widehat{\chi}|\cdot|\mathcal{F}(|g|)|\geq \varepsilon$ on $B_\delta(0)$. Thus, we have
  \[\norm{|g|*\chi}_2^2=\norm{\mathcal{F}(|g|)\cdot \widehat{\chi}}_2^2\geq \int_{B_\delta(0)} |\mathcal{F}(|g|)(\xi)|^2 \cdot |\widehat{\chi}(\xi)|^2 \dxi \geq \varepsilon^2 \cdot \vol(B_\delta(0))>0.\]
  Hence, any feature of this type is nontrivial. The assumption that $\widehat{\chi}(0)\neq 0$ is of vital importance for the argument, as is shown in \cite[Appendix A]{wiatowski2017energy} by the example of a scattering network (feature extractor) with nontrivial kernel.
  
  For the remainder of this section, we collect some basic auxiliary statements about the windowed scattering transform that will prove useful later. We include their proofs for self-containment.
  
  \begin{lemma}\label{lem: U is norm-preserving if we consider the whole Parseval frame}
    Let $N \in \N$. Then $U[\mathfrak{F}^N]:L^2(\R^d)\to \ell^2(\mathfrak{F};L^2(\R^d))$ is well-defined and norm-preserving, i.e., for all $f \in L^2(\R^d)$,
    \[W_N[\mathfrak{F}](f)=\sum_{p \in \mathfrak{F}^N}\norm{U[p]f}_2^2=\norm{f}_2^2.\]
  \end{lemma}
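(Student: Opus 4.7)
I would proceed by induction on $N \in \N$, using the Parseval frame property \eqref{ass: Parseval Frame condition} as the single analytic ingredient and otherwise just reorganizing the sums over scattering paths. The well-definedness assertion will be a free byproduct of the computation, since nonnegativity of every summand makes unconditional convergence equivalent to finiteness of the total sum.

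For the base case $N=1$, the path set is $\mathfrak{F}^1 = \mathfrak{F} = \{\chi\} \cup \Psi$, and for any $\phi \in \mathfrak{F}$ one has $\|U[\phi]f\|_2^2 = \||f*\phi|\|_2^2 = \|f*\phi\|_2^2$. Hence $W_1[\mathfrak{F}](f) = \|f*\chi\|_2^2 + \sum_{\psi \in \Psi} \|f*\psi\|_2^2$, which equals $\|f\|_2^2$ by \eqref{ass: Parseval Frame condition}.

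For the inductive step, I would note that every path $p \in \mathfrak{F}^{N+1}$ can be written uniquely as $p = (p',\phi)$ with $p' \in \mathfrak{F}^N$ and $\phi \in \mathfrak{F}$, and by definition of the scattering propagator $U[p]f = U[\phi](U[p']f) = |U[p']f*\phi|$. Since all terms are nonnegative, Tonelli allows the interchange
\begin{align*}
W_{N+1}[\mathfrak{F}](f) = \sum_{p' \in \mathfrak{F}^N} \sum_{\phi \in \mathfrak{F}} \bigl\||U[p']f*\phi|\bigr\|_2^2 = \sum_{p' \in \mathfrak{F}^N} \sum_{\phi \in \mathfrak{F}} \|U[p']f*\phi\|_2^2.
\end{align*}
Applying the Parseval frame identity \eqref{ass: Parseval Frame condition} to each (fixed) function $U[p']f \in L^2(\R^d)$ collapses the inner sum to $\|U[p']f\|_2^2$, and then the induction hypothesis gives $\sum_{p' \in \mathfrak{F}^N} \|U[p']f\|_2^2 = W_N[\mathfrak{F}](f) = \|f\|_2^2$.

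There is no real obstacle here; the only point that requires a small amount of care is justifying the interchange of the (possibly countably infinite) sums over $\mathfrak{F}^N$ and $\mathfrak{F}$ and simultaneously deducing well-definedness of $U[\mathfrak{F}^N]$. Both follow from nonnegativity of the summands together with Tonelli's theorem (or, equivalently, the fact that for series of nonnegative terms indexed by a countable set, unconditional summability coincides with finiteness of the supremum over finite subsums). Thus the statement reduces to a clean inductive application of the Parseval frame hypothesis on $\mathfrak{F}$.
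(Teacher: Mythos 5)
Your proof is correct and follows essentially the same route as the paper: induction on $N$ with the Parseval frame identity \eqref{ass: Parseval Frame condition} as the only analytic input. The only (immaterial) difference is that you peel off the \emph{last} filter of each path and apply Parseval before invoking the induction hypothesis, whereas the paper peels off the \emph{first} filter and writes $W_{N+1}[\mathfrak{F}](f)=\sum_{\psi\in\mathfrak{F}}W_N[\mathfrak{F}](|f*\psi|)$, applying the induction hypothesis first and Parseval second.
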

  \begin{proof}
    This is easily shown by induction: The base case $N=1$ is exactly one of the characterizations of a semi-discrete Parseval frame, hence true for $\mathfrak{F}$. The induction step is established as follows. Suppose that the statement is true for some $N \in \N$. Then, for all $f \in L^2(\R^d)$, we have
    \begin{align*}
      W_{N+1}[\mathfrak{F}](f)=\sum_{\psi \in \mathfrak{F}} W_N[\mathfrak{F}](|f*\psi|)= \sum_{\psi \in \mathfrak{F}} \norm{|f*\psi|}_2^2 = \sum_{\psi \in \mathfrak{F}} \norm{f*\psi}_2^2=\norm{f}_2^2.
    \end{align*}
  \end{proof}
  
  Despite its simplicity and brevity, the argument illustrates how the underlying inductive structure of the scattering network can be exploited in proofs. The following Proposition (cf. \cite[Proposition 2.5]{mallat2012group}) states the non-expansiveness of the operators $U$ and $S$, which is also due to their inductive definition. Considering the desirable properties of the scattering transform as a feature extractor, this property can be considered as stability against additive noise.
  
  \begin{proposition}\label{prop: U, S nonexpansive}
    Let $\emptyset \neq P \subseteq \Psi^N$ for some $N \in \N_0$. Then, for all $f,g \in L^2(\R^d)$,
    \begin{align*}
      \norm{S[P;\chi]f-S[P;\chi]g}_{\ell^2(P;L^2(\R^d))} \leq \norm{U[P]f-U[P]g}_{\ell^2(P;L^2(\R^d))}\leq \norm{f-g}_2.
    \end{align*}
  
    Furthermore, if $\s[\mathfrak{F}]=S[\mathcal{P}_\Psi;\chi]$ is norm-preserving, then it is also nonexpansive.
  \end{proposition}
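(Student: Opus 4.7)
The plan is to prove the two inequalities in the first part separately, and then bootstrap non-expansiveness of the full windowed scattering transform from a sharpened version of the inductive step. For the inequality $\norm{S[P;\chi]f-S[P;\chi]g}_{\ell^2(P;L^2(\R^d))}\leq \norm{U[P]f-U[P]g}_{\ell^2(P;L^2(\R^d))}$, I would exploit the identity $S[p;\chi]h=(U[p]h)*\chi$ together with Plancherel to reduce matters to the pointwise bound $|\widehat{\chi}|\leq 1$ almost everywhere, which is immediate from the Littlewood--Paley condition \eqref{ass: Littlewood-Paley condition}. Summing the resulting estimate $\norm{(U[p]f-U[p]g)*\chi}_2\leq \norm{U[p]f-U[p]g}_2$ over $p\in P$ yields the claim.

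For $\norm{U[P]f-U[P]g}_{\ell^2(P;L^2(\R^d))}\leq\norm{f-g}_2$ I would proceed by induction on $N$, with $P\subseteq \Psi^N$. The case $N=0$ is trivial since $U[e]=\id_{L^2(\R^d)}$. For the inductive step, I enlarge $P$ to all of $\Psi^{N+1}$ (which only increases the left-hand side), decompose each $p\in\Psi^{N+1}$ as $p=(q,\psi)$ with $q\in\Psi^N$ and $\psi\in\Psi$, and apply the pointwise reverse triangle inequality $\bigl||U[q]f*\psi|-|U[q]g*\psi|\bigr|\leq |(U[q]f-U[q]g)*\psi|$. Summing first over $\psi\in\Psi$ via the Parseval frame identity \eqref{ass: Parseval Frame condition} applied to $h_q:=U[q]f-U[q]g$, which gives $\sum_{\psi\in\Psi}\norm{h_q*\psi}_2^2\leq \norm{h_q}_2^2$, and then over $q\in\Psi^N$ using the induction hypothesis closes the step.

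For the ``moreover'' assertion, the same inductive argument, this time without discarding the output-generating filter, yields the sharper layer-to-layer recursion
\begin{equation*}
\sum_{p\in\Psi^{N+1}}\norm{U[p]f-U[p]g}_2^2 + \sum_{q\in\Psi^N}\norm{S[q;\chi]f-S[q;\chi]g}_2^2 \leq \sum_{q\in\Psi^N}\norm{U[q]f-U[q]g}_2^2,
\end{equation*}
because \eqref{ass: Parseval Frame condition} applied to $h_q$ reads $\sum_{\psi\in\Psi}\norm{h_q*\psi}_2^2 = \norm{h_q}_2^2-\norm{h_q*\chi}_2^2$ and $h_q*\chi=S[q;\chi]f-S[q;\chi]g$ by linearity of convolution. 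Telescoping this recursion from $N=0$ to $N=M$ and dropping the non-negative remainder at level $M+1$ gives $\sum_{n=0}^{M}\sum_{q\in\Psi^n}\norm{S[q;\chi]f-S[q;\chi]g}_2^2\leq \norm{f-g}_2^2$, and letting $M\to\infty$ delivers $\norm{\s[\mathfrak{F}]f-\s[\mathfrak{F}]g}_{\ell^2(\p_\Psi;L^2(\R^d))}\leq\norm{f-g}_2$.

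I expect no real obstacle beyond careful bookkeeping: the enlargement $P\subseteq\Psi^{N+1}$ is needed to have access to the full Parseval frame identity, and one has to correctly identify the ``missing'' $\chi$-contribution in that identity with the windowed scattering feature $S[q;\chi]f-S[q;\chi]g$. Interestingly, norm-preservation of $\s[\mathfrak{F}]$ does not seem to enter the recursive argument itself; it serves only to single out the full windowed scattering transform as the meaningful object whose non-expansiveness is being asserted.
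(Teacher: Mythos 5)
Your proof is correct, and for the two displayed inequalities it follows the paper's argument essentially verbatim: $\norm{\widehat{\chi}}_\infty\leq 1$ from the Littlewood--Paley condition for the first inequality, and induction on $N$ (after enlarging $P$ to $\Psi^N$) via the reverse triangle inequality and the frame identity $\sum_{\psi\in\Psi}\norm{h*\psi}_2^2=\norm{h}_2^2-\norm{h*\chi}_2^2$ for the second. The only genuine difference is in the ``moreover'' part. The paper invokes norm-preservation of $\s[\mathfrak{F}]$ to deduce $\lim_{N\to\infty}\norm{U[\Psi^N]f-U[\Psi^N]g}_{\ell^2(\Psi^N;L^2(\R^d))}^2=0$, so that the infinite telescoping series evaluates exactly to $\norm{f-g}_2^2$. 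You instead telescope only finitely many layers, drop the non-negative remainder $\sum_{p\in\Psi^{M+1}}\norm{U[p]f-U[p]g}_2^2$, and pass to the limit $M\to\infty$ by monotone convergence. This is a valid and slightly cleaner route, and your closing observation is accurate: it shows the norm-preservation hypothesis is not actually needed for non-expansiveness of $\s[\mathfrak{F}]$ (it is only needed if one wants the telescoping sum to be an equality), so your argument proves a marginally stronger statement than the one asserted.
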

  
  \begin{proof}
    By the Littlewood-Paley condition \eqref{ass: Littlewood-Paley condition}, we have $\norm{\widehat{\chi}}_\infty \leq 1$. Thus, for all $p \in P$, and all $f,g \in L^2(\R^d)$, 
    \[\norm{S[p;\chi]f-S[p;\chi]g}_2=\norm{(U[p]f-U[p]g)*\chi}_2 \leq \norm{U[p]f-U[p]g}_2,\]
    which proves the first inequality. 
    
    Observe that it suffices to prove the second inequality for the case $P=\Psi^N$, which we do by induction on $N\in \N_0$. The base case $N=0$ is trivial. For the purpose of the induction step, suppose that the hypothesis is true for some $N \in \N_0$. 
    By the reverse triangle inequality, we have, for all $f,g \in L^2(\R^d)$,
    \begin{align*}
      \lvert|U[\Psi^{N+1}]f-&U[\Psi^{N+1}]g\rvert|_{\ell^2(\Psi^{N+1};L^2(\R^d))}^2 \\
      &= \sum_{p \in \Psi^N}\sum_{\psi \in \Psi} \norm{|(U[p]f)*\psi|-|(U[p]g)*\psi|}_2^2 \\
      &\leq \sum_{p \in \Psi^N}\sum_{\psi \in \Psi} \norm{|(U[p]f-U[p]g)*\psi|}_2^2\\
      &=\sum_{p \in \Psi^N}\sum_{\psi \in \Psi} \norm{(U[p]f-U[p]g)*\psi}_2^2 \\
      &= \sum_{p \in \Psi^N} (\norm{U[p]f-U[p]g}_2^2-\norm{S[p;\chi]f-S[p;\chi]g}_2^2) \\
      &= \norm{U[\Psi^{N}]f-U[\Psi^{N}]g}_{\ell^2(\Psi^{N};L^2(\R^d))}^2-\norm{S[\Psi^{N};\chi]f-S[\Psi^{N};\chi]g}_{\ell^2(\Psi^{N};L^2(\R^d))}^2,
    \end{align*}
    where the second last step is due to $\mathfrak{F}$ being a frame. Applying the induction hypothesis to the right-hand side of the latter inequality concludes this part of the proof. 
  
    Finally, if $\s[\mathfrak{F}]$ is norm-preserving, the non-expansiveness of $\s[\mathfrak{F}]$ follows directly from the above estimates (which were derived independent of the induction hypothesis) using a telescoping series argument. In fact, by Proposition \ref{prop: Energy decomposition and norm-preservation of S}, we have $\lim_{N \to \infty} W_N[\Psi](h)=0$ for all $h \in L^2(\R^d)$, which entails that, for all $f,g \in L^2(\R^d)$,
    \[\lim_{N \to \infty} \norm{U[\Psi^{N}]f-U[\Psi^{N}]g}_{\ell^2(\Psi^{N};L^2(\R^d))}^2=0.\]
    Consequently,
    \begin{align*}
      \lvert|\s[\mathfrak{F}]f-&\s[\mathfrak{F}]g\rvert|_{\ell^2(\mathcal{P}_\Psi;L^2(\R^d))}^2 \\
      &= \sum_{N=0}^\infty \norm{S[\Psi^{N};\chi]f-S[\Psi^{N};\chi]g}_{\ell^2(\Psi^{N};L^2(\R^d))}^2 \\
      &\leq \sum_{N=0}^\infty \left(\norm{U[\Psi^{N}]f-U[\Psi^{N}]g}_{\ell^2(\Psi^{N};L^2(\R^d))}^2-\norm{U[\Psi^{N+1}]f-U[\Psi^{N+1}]g}_{\ell^2(\Psi^{N+1};L^2(\R^d))}^2\right)\\
      &= \norm{f-g}_2^2.
    \end{align*}
  \end{proof}
  
  We exploit the non-expansiveness of $U$ to derive some useful estimates concerning the energy remainder.
  
  \begin{lemma}\label{lem: Lipschitz-type bounds for W_N} 
    Let $\emptyset \neq \Phi \subseteq \Psi$, and let $N \in \N$. Then, we have, for all $f,g \in L^2(\R^d)$, 
    \begin{myenum}
      \item[a)] $|W_N[\Phi](f)-W_N[\Phi](g)| \leq \sqrt{2} \cdot \norm{f-g}_2 \cdot \sqrt{W_N[\Phi](f)+W_N[\Phi](g)}$,
      \item[b)] $W_N[\Phi](f)\geq \frac{1}{2} \cdot W_N[\Phi](g)-\norm{f-g}_2^2.$
    \end{myenum}
  \end{lemma}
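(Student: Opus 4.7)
Both parts reduce to elementary scalar manipulations once one observes that $\sqrt{W_N[\Phi](h)}$ is precisely the $\ell^2(\Phi^N;L^2(\R^d))$-norm of $U[\Phi^N]h$, and that Proposition \ref{prop: U, S nonexpansive} applies directly with $P = \Phi^N \subseteq \Psi^N$, producing the non-expansiveness estimate
\[ \norm{U[\Phi^N]f - U[\Phi^N]g}_{\ell^2(\Phi^N;L^2(\R^d))} \leq \norm{f-g}_2. \]
Combined with the reverse triangle inequality in that Hilbert space, this yields the master bound $\big| \sqrt{W_N[\Phi](f)} - \sqrt{W_N[\Phi](g)} \big| \leq \norm{f-g}_2$, from which both a) and b) will follow by purely scalar arguments.

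For part a), I plan to factor $|W_N[\Phi](f) - W_N[\Phi](g)|$ as a difference of squares, bounding the factor $\big|\sqrt{W_N[\Phi](f)} - \sqrt{W_N[\Phi](g)}\big|$ by the master inequality and the other factor $\sqrt{W_N[\Phi](f)} + \sqrt{W_N[\Phi](g)}$ by the elementary estimate $\sqrt{x} + \sqrt{y} \leq \sqrt{2(x+y)}$ (valid for $x,y \geq 0$). Multiplying the two bounds gives exactly the asserted $\sqrt{2}\norm{f-g}_2 \sqrt{W_N[\Phi](f) + W_N[\Phi](g)}$.

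For part b), I plan to rearrange the master inequality as $\sqrt{W_N[\Phi](f)} \geq \sqrt{W_N[\Phi](g)} - \norm{f-g}_2$. If the right-hand side is non-positive, then $\tfrac{1}{2}W_N[\Phi](g) \leq \norm{f-g}_2^2$ and the claim is trivial. Otherwise, squaring produces a cross term $2 \sqrt{W_N[\Phi](g)} \cdot \norm{f-g}_2$, which I will absorb via the weighted Young inequality $2xy \leq \tfrac12 x^2 + 2 y^2$ (equivalent to $(x/\sqrt 2 - \sqrt 2 y)^2 \geq 0$); the resulting rearrangement is precisely $W_N[\Phi](f) \geq \tfrac12 W_N[\Phi](g) - \norm{f-g}_2^2$.

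The main (and essentially only) obstacle is verifying that Proposition \ref{prop: U, S nonexpansive} truly applies with $P = \Phi^N$ for $\emptyset \neq \Phi \subseteq \Psi$; but this is immediate from its formulation, since $\Phi^N \subseteq \Psi^N$ and the cited result covers all such $P$. Beyond that, both parts are a few lines of scalar algebra driven by the reverse triangle inequality and a single application of Young's inequality.
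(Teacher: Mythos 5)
Your proposal is correct, and it reaches both bounds by a slightly different route than the paper. The paper proves a) path by path: it applies the reverse triangle inequality to each $\abs{\norm{U[p]f}_2^2-\norm{U[p]g}_2^2}$, sums over $p\in\Phi^N$, and then needs the Cauchy--Schwarz inequality across the sum together with $(x+y)^2\le 2x^2+2y^2$ before invoking non-expansiveness. You instead work with the aggregate norms $\sqrt{W_N[\Phi](h)}=\norm{U[\Phi^N]h}_{\ell^2(\Phi^N;L^2(\R^d))}$ from the start: the reverse triangle inequality in the Hilbert space $\ell^2(\Phi^N;L^2(\R^d))$ plus Proposition \ref{prop: U, S nonexpansive} (which indeed applies to $P=\Phi^N\subseteq\Psi^N$, exactly as the paper itself uses it) gives the master bound $\bigl|\sqrt{W_N[\Phi](f)}-\sqrt{W_N[\Phi](g)}\bigr|\le\norm{f-g}_2$, and then a) is a scalar difference-of-squares factorization combined with $\sqrt{x}+\sqrt{y}\le\sqrt{2(x+y)}$. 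This is shorter and avoids the summation-level Cauchy--Schwarz entirely; the two arguments rely on the same non-expansiveness input and yield the identical constant. For b) the paper expands $W_N[\Phi](g)=\norm{U[\Phi^N]g-U[\Phi^N]f+U[\Phi^N]f}^2\le(\norm{f-g}_2+\sqrt{W_N[\Phi](f)})^2\le 2\norm{f-g}_2^2+2W_N[\Phi](f)$ and rearranges, which is your master inequality read in the other direction; your version with the case distinction and the weighted Young inequality $2xy\le\tfrac12x^2+2y^2$ is a mildly longer but equally valid derivation of the same estimate. No gaps.
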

  \begin{proof}
    Let $f,g \in L^2(\R^d)$. Applying the reverse triangle inequality yields, for any $p \in \Phi^N$,
    \begin{align*}
      |\norm{U[p]f}_2^2-\norm{U[p]g}_2^2|&=|\norm{U[p]f}_2-\norm{U[p]g}_2|\cdot (\norm{U[p]f}_2+\norm{U[p]g}_2) \\
      &\leq \norm{U[p]f-U[p]g}_2 \cdot (\norm{U[p]f}_2+\norm{U[p]g}_2). 
    \end{align*}
    Together with the Cauchy-Schwarz inequality and the non-expansiveness of $U[\Phi^N]$, we obtain 
    \begin{align*}
      |W_N[\Phi](f)-W_N[\Phi](g)|^2 &= \left|\sum_{p \in \Phi^N} \norm{U[p]f}_2^2-\norm{U[p]g}_2^2\right|^2 \\
      &\leq \left(\sum_{p \in \Phi^N} \norm{U[p]f-U[p]g}_2 \cdot \left(\norm{U[p]f}_2+\norm{U[p]g}_2\right)\right)^2 \\
      &\leq \left(\sum_{p \in \Phi^N} \norm{U[p]f-U[p]g}_2^2\right) \cdot \left(\sum_{p \in \Phi^N} \left(\norm{U[p]f}_2+\norm{U[p]g}_2\right)^2\right)\\
      &\leq \norm{f-g}_2^2 \cdot \sum_{p \in \Phi^N} \left(2\norm{U[p]f}_2^2+ 2\norm{U[p]g}_2^2\right) \\
      &= 2 \norm{f-g}_2^2 \cdot (W_N[\Phi](f)+W_N[\Phi](g)).
    \end{align*}
    Taking square roots on both sides of the latter inequality proves a).
  
    We again use the non-expansiveness of $U[\Phi]$ and find that
    \begin{align*}
      W_N[\Phi](g)&=\norm{U[\Phi^N](g)-U[\Phi^N](f)+U[\Phi^N](f)}_{\ell^2(\Phi^N;L^2(\R^d))}^2 \\
      &\leq \left(\norm{U[\Phi^N](g)-U[\Phi^N](f)}_{\ell^2(\Phi^N;L^2(\R^d))}+\norm{U[\Phi^N](f)}_{\ell^2(\Phi^N;L^2(\R^d))}\right)^2 \\
      &\leq \left(\norm{g-f}_2+\norm{U[\Phi^N](f)}_{\ell^2(\Phi^N;L^2(\R^d))}\right)^2 \\
      &\leq 2 \norm{f-g}_2^2 + 2 \norm{U[\Phi^N](f)}_{\ell^2(\Phi^N;L^2(\R^d))}^2 \\
      &= 2 \norm{f-g}_2^2 + 2 W_N[\Phi](f).
    \end{align*}
    By rearranging, we conclude the proof of b).
  \end{proof}
  
  It is now easy to derive the continuity of $W_N[\Phi]$, which we use to prove the density result, Proposition \ref{prop: density of adversarial examples}, in Section 2. 
  \begin{corollary}\label{cor: W_N is continuous}
    Let $\emptyset \neq \Phi \subseteq \Psi$, and let $N \in \N$. Then, $W_N[\Phi]:L^2(\R^d)\to \R$ is continuous.
  \end{corollary}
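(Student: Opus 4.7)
The plan is to derive continuity directly from the two bounds in Lemma \ref{lem: Lipschitz-type bounds for W_N}. Fix $f \in L^2(\R^d)$; I want to show that $W_N[\Phi](g) \to W_N[\Phi](f)$ as $g \to f$ in $L^2(\R^d)$. Part a) of the lemma already looks like a quantitative continuity statement, the only catch being the presence of $W_N[\Phi](g)$ on the right-hand side. So I need an a priori local bound on $W_N[\Phi](g)$ for $g$ near $f$.

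First I would observe that the estimate derived inside the proof of Lemma \ref{lem: Lipschitz-type bounds for W_N}\,b) is symmetric in its data: swapping the roles of $f$ and $g$ yields
\[
 W_N[\Phi](g) \leq 2\|f-g\|_2^2 + 2\, W_N[\Phi](f).
\]
Note that $W_N[\Phi](f)$ is finite: since $\Phi \subseteq \Psi$, we have $W_N[\Phi](f) \leq W_N[\Psi](f) \leq \|f\|_2^2$ by the energy decomposition \eqref{eq: energy decomp} of Proposition \ref{prop: Energy decomposition and norm-preservation of S}. Hence for any $g$ with $\|f-g\|_2 \leq 1$, we get the uniform local bound $W_N[\Phi](g) \leq 2 + 2\|f\|_2^2$.

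Next I would plug this into Lemma \ref{lem: Lipschitz-type bounds for W_N}\,a), which gives
\[
 |W_N[\Phi](f) - W_N[\Phi](g)| \leq \sqrt{2}\, \|f-g\|_2\, \sqrt{W_N[\Phi](f) + W_N[\Phi](g)}.
\]
For $\|f - g\|_2 \leq 1$, the square root factor on the right is bounded by the constant $\sqrt{3\|f\|_2^2 + 2}$, depending only on $f$. Letting $g \to f$ in $L^2(\R^d)$ then forces the right-hand side to $0$, establishing continuity of $W_N[\Phi]$ at the arbitrary point $f$. There is no real obstacle here; the work is entirely done by Lemma \ref{lem: Lipschitz-type bounds for W_N}, and all I need to add is the observation that part b) (applied with $f,g$ swapped) furnishes the required local boundedness of $W_N[\Phi]$ to close the feedback loop in the estimate from part a).
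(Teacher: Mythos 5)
Your argument is correct and follows essentially the same route as the paper: both proofs use Lemma \ref{lem: Lipschitz-type bounds for W_N}\,a) as the key estimate and only need a local bound on $W_N[\Phi](g)$ for $g$ near $f$ to close the loop. The only (immaterial) difference is that you obtain this local bound from part b) of the lemma (which, as stated, already rearranges to $W_N[\Phi](g)\leq 2\|f-g\|_2^2+2W_N[\Phi](f)$ with no swapping needed), whereas the paper uses the even more direct bound $W_N[\Phi](g)\leq\|g\|_2^2\leq(\|f\|_2+\|f-g\|_2)^2$ coming from the energy decomposition.
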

  \begin{proof}
    As the composition of continuous maps, $W_N[\Phi]= \lVert \text{} \cdot \text{} \rVert _{
    \ell ^{2}(\Phi ^{N};L^{2}({\mathbb{R}}^{d}))}^{2} \circ U[\Phi ^{N}]$ is continuous. Alternatively, one can use the estimate from Lemma~\ref{lem: Lipschitz-type bounds for W_N}a) to derive the continuity of $W_N[\Phi]$ more explicitly: Continuity at $f=0$ is obvious, since we have, for all $g \in L^2(\R^d)$, $W_N[\Phi](g)\leq \norm{g}_2^2$. Now, fix $\varepsilon>0$ and $f \in L^2(\R^d)\setminus\{0\}$. Define \[\delta=\delta(f,\varepsilon):=\min\left\{\frac{\varepsilon}{\sqrt{10} \norm{f}_2},\norm{f}_2\right\}>0.\] 
    Applying Lemma \ref{lem: Lipschitz-type bounds for W_N}a) to $g \in L^2(\R^d)$ with $\norm{f-g}_2<\delta$ gives
    \begin{align*}
      |W_N[\Phi](f)-W_N[\Phi](g)|^2 &\leq 2 \norm{f-g}_2^2 \cdot(W_N[\Phi](f)+W_N[\Phi](g))\\
      &\leq 2 \delta^2 \cdot (\norm{f}_2^2+\norm{g}_2^2) \\
      &\leq 2 \delta^2 \cdot (\norm{f}_2^2+(\norm{f}_2+\delta)^2) \\
      &\leq 10 \delta^2 \cdot \norm{f}_2^2 \leq \varepsilon^2.
    \end{align*}
  \end{proof}

  We conclude this section with a lemma describing the interplay of the scattering propagator with dilations, which is similar to covariance (cf. \cite[Equation (20)]{mallat2012group}). Despite its elementary proof, this property forms the basis for our main result in Section \ref{sec: Slow Scattering Propagation}.  
  \begin{lemma}\label{lemma: commuting U[p] with D_A}
    Let $N \in \N$, let $p\in (L^1(\R^d)\cap L^2(\R^d))^N$, and let $A \in \GL_d(\R)$.
    Then, for all $f \in L^2(\R^d)$,
    \begin{align*}
      U[p]D_{A}^1f=D_{A}^1U[D_{A^{-1}}^1p]f,
    \end{align*}
    where $D_{A^{-1}}^1p:=(D_{A^{-1}}^1p_1,\ldots,D_{A^{-1}}^1p_N)$.
    Moreover, if $\emptyset \neq \Phi \subseteq \mathfrak{F}$, then we have, for all $f \in L^2(\R^d)$,
    \[W_N[\Phi](D_{A}^2 f)=W_N[D_{A^{-1}}^1\Phi](f).\]
  \end{lemma}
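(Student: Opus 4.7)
The plan is to reduce everything to a single key identity relating convolution and the $L^1$-normalized dilation, namely
\[
(D_A^1 g) * h \;=\; D_A^1\!\left(g * D_{A^{-1}}^1 h\right) \qquad (g,h \in L^1\cap L^2),
\]
which I would establish by a direct change of variables in the convolution integral: writing out $(D_A^1 g)*h(x) = |\det A|\int g(Ay)\,h(x-y)\,dy$ and substituting $z=Ay$ yields $\int g(z)\,h(x-A^{-1}z)\,dz$, and the same substitution applied to the right-hand side produces the same expression. This is the only genuine computation in the proof; everything else is algebra.

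Next I would upgrade this to a single-filter version of the first claim. Since $|\det A|>0$, the operator $D_A^1$ commutes with pointwise moduli, and consequently
\[
U[\psi] D_A^1 f \;=\; \bigl|(D_A^1 f)*\psi\bigr| \;=\; \bigl|D_A^1(f*D_{A^{-1}}^1\psi)\bigr| \;=\; D_A^1\,U[D_{A^{-1}}^1\psi] f .
\]
A straightforward induction on the path length $N$ then yields $U[p]D_A^1 f = D_A^1 U[D_{A^{-1}}^1 p] f$, because the induction step amounts to applying the single-filter identity to the function $U[(p_1,\ldots,p_{N-1})]D_A^1 f = D_A^1\,U[D_{A^{-1}}^1(p_1,\ldots,p_{N-1})]f$ produced by the hypothesis.

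For the second identity I would switch to the $L^2$-normalization via $D_A^2 = |\det A|^{-1/2}\,D_A^1$. Since $U[p]$ is homogeneous of degree one with respect to multiplication by positive scalars (the scalar factors through each convolution and through $|\cdot|$), the first part gives
\[
U[p]\,D_A^2 f \;=\; |\det A|^{-1/2}\,D_A^1\,U[D_{A^{-1}}^1 p]f.
\]
Taking $L^2$-norms and using that $D_A^1$ multiplies the $L^2$-norm squared by exactly $|\det A|$, the two $|\det A|$ factors cancel and I obtain $\|U[p]D_A^2 f\|_2^2 = \|U[D_{A^{-1}}^1 p]f\|_2^2$. Summing over $p\in\Phi^N$ and reindexing via the bijection $\Phi^N \to (D_{A^{-1}}^1\Phi)^N$, $p\mapsto D_{A^{-1}}^1 p$, yields $W_N[\Phi](D_A^2 f)=W_N[D_{A^{-1}}^1\Phi](f)$.

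The main (mild) obstacle is just bookkeeping: one must keep the two dilation normalizations $D^1$ and $D^2$ strictly separate, use $D^1$ for the covariance step (so that it intertwines cleanly with convolution) but use $D^2$ for the $L^2$-isometry step, and verify that the determinant factors balance exactly when passing from operator identity to norm identity. No deeper functional-analytic input is needed beyond the elementary substitution and the trivial homogeneity $U[p](cf)=c\,U[p]f$ for $c>0$.
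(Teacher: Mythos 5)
Your proposal is correct and follows essentially the same route as the paper: the same convolution--dilation intertwining identity proved by substitution, commutation of $D_A^1$ with the modulus, iteration along the path, and then passing to the $L^2$-normalized dilation (the paper phrases the last step as unitarity of $D_A^2$, which is exactly your cancellation of the determinant factors). No gaps.
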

  
  \begin{proof}
    Let $f \in L^2(\R^d)$, and let $g \in L^1(\R^d)\cap L^2(\R^d)$. A direct computation shows that 
    \[(D_A^1f)*g=D_A^1(f*(D_{A^{-1}}^1g)).\]
    The modulus operator 
    \[M:L^2(\R^d)\to L^2(\R^d), \quad h \mapsto |h|\] 
    commutes with $D_A^1$, which proves $U[g]D_A^1f=D_A^1U[D_{A^{-1}}^1g]f$. Iterating this relation along the path $p$ concludes the first identity.
    
    Concerning the second, we note that $D_A^1= |\det(A)|^{\frac{1}{2}}\cdot D_A^2$, which gives $U[p]D_A^2=D_A^2U[D_{A^{-1}}^1p]$. Since $D_A^2$ is unitary on $L^2(\R^d)$, summing over all paths in $\Phi^N$ entails the second identity.
  \end{proof}
  %============================================================
  \section{Slow Scattering Propagation}\label{sec: Slow Scattering Propagation}
  %============================================================
  
  In this section, we show that energy propagation can be arbitrarily slow in scattering networks that employ filters $\mathfrak{F}=\{\chi\}\cup \Psi$ with an underlying structure similar to those generated by wavelets.
  
  As some of the details of this section are quite technical, we begin by illustrating the conceptual idea behind them. In light of this, let us consider the situation from Example \ref{ex: wavelet filters in dimension 1} again, i.e., suppose that the high-pass filters $\Psi=\{\psi_j ~|~ j \in \Z\setminus \{0\}\}$ are generated by dilations of $\psi \in L^1(\R)\cap L^2(\R)$, $\supp(\widehat{\psi})\subseteq [\frac{1}{2},2]$, according to
  \begin{align*}
    \psi_j:=
    \begin{cases}
      D^1_{2^j}\psi=2^j \psi(2^j\:\cdot) & \text{for } j \in \N \\
      D^1_{2^{|j|}}\psi(-\:\cdot)=2^{|j|} \psi(-2^{|j|}\:\cdot) &\text{for } j \in \Z_{<0}
    \end{cases}.
  \end{align*}
  We want to show that for any nonincreasing null-sequence $E\in \R_{>0}^\N$, there exists a function $f_E\in L^2(\R)$ that satisfies the following conditions:
  \begin{myenum}
    \item[a)] We have $\norm{f_E}_2^2=E_1$.
    \item[b)] For all $N \in \N$, we have that $W_N[\Psi](f_E)\geq \frac{1}{2}\cdot E_N$.
  \end{myenum}
  
  \begin{proof sketch}
  Define 
  \[(a_k)_{k \in \N}:=\left(\sqrt{E_k-E_{k+1}}\right)_{k \in \N},\] 
  and note that a telescoping argument gives 
  \[\lim_{N \to \infty} \sum_{k=1}^N a_k^2 = \lim_{N \to \infty} E_1-E_{N+1}=E_1.\] 
  Fix any $f_0 \in L^2(\R)$ that satisfies $\norm{f_0}_2=1$ and $\supp(\widehat{f_0})\subseteq [1,2]$. We will later (see Corollary \ref{cor: slow propagation in case of a single expansive matrix} and Corollary \ref{cor: Wavelet decay rates}) show that, for all $N \in \N$,
  \[\lim_{m \to \infty} W_N\left(f_m\right)=1,\]
  where $f_m:=2^{m/2}f_0(2^m \,\cdot\,)$, $m \in \N$. This is a consequence of Lemma \ref{lemma: commuting U[p] with D_A}. 
  \begin{figure}[t!]
    \centering
    \begin{minipage}{0.999\textwidth}
      \centering
      \resizebox{0.999\textwidth}{!}{\input{f_0_Fourier_dilations_large_font.pgf}} 
    \end{minipage}\hfill
    \captionsetup{width=.78\linewidth}
    \caption{Fourier transform of a sample function $f_0$, and of its dilations $f_2$, $f_4$, $f_6$, illustrating the frequency separation between the individual components of $f_E$ resulting from the dilation. Frequency separation is key to our construction of adversarial signals. In this illustration, $f_0=\frac{h}{\norm{h}_2}$, and $\widehat{h}(\xi)=\mathds{1}_{(1,2)} \cdot \exp\left(-\frac{4}{1-(3-2\xi)^2}\right)$.}
    \label{fig: f_0 Fourier dilations}
  \end{figure}

  Therefore, we can find a strictly increasing sequence $(m_k)_{k \in \N} \subseteq \N$ such that, for all $k \in \N$, $m_{k+1}\geq 2+m_{k}$ and $W_k(f_{m_k})\geq \frac{1}{2}$. We claim that 
  \[f_E:=\sum_{k=1}^\infty a_k f_{m_k} \in L^2(\R)\]
  does the job (Figs.~\ref{fig: f_0 Fourier dilations},\ref{fig: illustr of proof principle}). To this end, first note that, for all $m \in \N$, $\supp(\widehat{f_m})\subseteq [2^m,2^{m+1}]$. In particular, the $(f_m)_{m \in \N}$ are orthonormal, hence $\norm{f_E}_2^2=\sum_{k=1}^\infty |a_k|^2=E_1$. Moreover, since 
  \[\supp(\widehat{\psi_j})\subseteq [\sgn(j)\cdot 2^{|j|-1},\sgn(j)\cdot 2^{|j|+1}],\] 
   \begin{figure}[h!]
  	\centering
  	\begin{minipage}{0.499\textwidth}
  		\centering
  		\resizebox{0.999\textwidth}{!}{\input{Re_f_E_zoomed_out_large_font.pgf}} 
  	\end{minipage}\hfill
  	\begin{minipage}{0.499\textwidth}
  		\centering
  		\resizebox{0.999\textwidth}{!}{\input{Im_f_E_zoomed_out_large_font.pgf}}
  	\end{minipage}
  	\\
  	\begin{minipage}{0.499\textwidth}
  		\centering
  		\resizebox{0.999\textwidth}{!}{\input{Re_f_E_zoomed_in_large_font.pgf}} 
  	\end{minipage}\hfill
  	\begin{minipage}{0.499\textwidth}
  		\centering
  		\resizebox{0.999\textwidth}{!}{\input{Im_f_E_zoomed_in_large_font.pgf}}
  	\end{minipage}
  	\captionsetup{width=0.9\linewidth}
  	\caption{Approximation of the real part (left), and imaginary part (right) of $f_E$, where $E=(\nicefrac{1}{k})_{k\in \N}$, and $f_0=\mathcal{F}^{-1}(\mathds{1}_{[1,\frac{3}{2}]}-\mathds{1}_{[\frac{3}{2},2]})$. To avoid numerical issues arising from numbers being close to zero, and for the sake of illustration, we have explicitly chosen $m_k=2k$, $k \in \N$. The top figures indicate the global behavior of $f_E$, while the zoomed-in figures below suggest the wild local behavior of our semi-explicit constructions.}\label{fig: illustr of proof principle}
  \end{figure}
  we have
  \begin{align*}
    f_E*\psi_j=
    \begin{cases}
      a_k f_{m_k}*\psi_{m_k} & \text{if } j=m_k \text{ for some } k \in \N \\
      a_k f_{m_k}*\psi_{m_k+1} & \text{if } j=m_k+1 \text{ for some } k \in \N \\
      0 &\text{else}
    \end{cases}, \quad j \in \Z\setminus \{0\}.
  \end{align*}
  Altogether, we obtain, for all $N \in \N$,\enlargethispage*{15mm}
  \begin{align*}
    W_{N}(f_E)&=\sum_{j \in \Z \setminus \{0\}} W_{N-1}(|f_E*\psi_j|) \\
    &=\sum_{k=1}^\infty \left(W_{N-1}(|a_kf_{m_k}*\psi_{m_k}|) + W_{N-1}(|a_kf_{m_k}*\psi_{m_k+1}|)\right) \\
    &=\sum_{k=1}^\infty |a_k|^2 \cdot \left(W_{N-1}(|f_{m_k}*\psi_{m_k}|) + W_{N-1}(|f_{m_k}*\psi_{m_k+1}|)\right)\\
    &= \sum_{k=1}^\infty |a_k|^2 \cdot W_{N}(f_{m_k})
    \geq \sum_{k=N}^\infty |a_k|^2 \cdot W_{k}(f_{m_k}) 
    \geq \frac{1}{2} \cdot \sum_{k=N}^\infty |a_k|^2 = \frac{1}{2} \cdot E_N. &\square
  \end{align*}
  \end{proof sketch}
  
  To summarize, the central idea of the proof is as follows: Given $\delta>0$, construct a sequence of signals $(f_{m_k})_{k \in \N} \subseteq L^2(\R)$ such that (i) the energy remainder $W_N$ behaves (super-)additively due to separation in the frequency domain and (ii) the energy decays slowly for $f_{m_k}$ across the first $k$ layers of the scattering network, according to $W_k(f_{m_k})\geq \delta$. Weighting the signals with an $\ell^2$-sequence ensures square-integrability of $f_E$. The energy that is contained in the scattering layers of $f_E$ of depth greater than $N-1$ (i.e., $W_N(f_E)$) is controlled by the weighted energy remainders of the signals $(f_{m_k})_{k \geq N}$. This results in a slow propagation of energy across all network layers. The speed of convergence is bounded from below by the speed of convergence of the $\ell^2$-series of the chosen weights.  
  
  In order to generalize this idea to larger classes of filter banks and higher dimensions, we need to establish a much more general version of super-additivity for the family of nonlinear operators $(W_N:L^2(\R^d)\to \R)_{N \in \N}$. In the proof sketch above, we have implicitly used their additivity on separated signals (cf. \cite[Lemma 2.3]{nicola2023stability}). However, if the generator $\psi$ fails to be bandlimited, we cannot generally expect exact additivity on separated signals. Moreover, we would like to allow for more flexibility in the choice of our filter bank. The following lemma establishes an \textit{approximate} version of super-additivity for the operators $(W_N)_{N \in \N}$ if the signals are nearly separated in Fourier domain. We use the high-pass filters $\Psi$ to measure the degree of separation.
  
  \begin{lemma}\label{lem: approx additivity}
    Let $F=(f_k)_{k\in \N_0} \subset L^2(\R^d)$, let $(\eta_k)_{k \in \N_0} \in \R_{>0}^{\N_0}$, and let $a=(a_k)_{k \in \N_0}\in \ell^2(\N_0;\C)$. Assume that there exist finite sets $\Psi_k\subseteq \Psi$, $k \in \N_0$, such that the following hold:
    \begin{myenum}
      \item[(i)] The series $\sum_{k,j=0}^\infty\left|a_ka_j \ip{f_k,f_j}\right|$ converges.
      \item[(ii)] The series $\sum_{k=0}^\infty \varepsilon_k$ converges, where $\varepsilon_k:=(k+1) \cdot \eta_k+\sum_{j=k+1}^\infty \eta_j$.
      \item[(iii)] We have $\Psi_{k_1}\cap \Psi_{k_2}=\emptyset$ whenever $k_1\neq k_2$. 
      \item[(iv)] We have, for every $k \in \N_0$,
      \begin{align*}
        \sum_{\psi \in \Psi\setminus\Psi_k} \norm{f_k*\psi}_2^2 \leq \eta_k.
      \end{align*}  
      \item[(v)] We have, for every $k \in \N_0$ and $0\leq j < k$,
      \begin{align*}
        \sum_{\psi \in \Psi_k} \norm{f_j*\psi}_2^2 \leq \eta_k.
      \end{align*}
    \end{myenum}
  
    Then, the series 
    \[f:=\sum_{k=0}^\infty a_k f_k\] 
    converges in $L^2(\R^d)$ with $\norm{f}_2^2\leq \sum_{k,j=0}^\infty\left|a_ka_j \ip{f_k,f_j}\right|$, and we have, for every $N \in \N$, $n \in \N_0$, 
    \begin{align}\label{inequ: lower bound for W_N(f) (technical)}
      W_N\left(f\right)\geq \sum_{k=n}^\infty \left(\frac{|a_k|^2}{2} \cdot W_{N}(f_k) - 2 \norm{a}_{\ell^2(\C)}^2 \cdot \varepsilon_k\right).
    \end{align}
  
    Moreover, suppose that in addition to our standing assumptions (i)-(v) there exists $\delta>0$ such that
    \begin{myenum}
      \item[(vi)] for all $k \in \N$, \[W_k(f_k)\geq 4\delta.\]
      \item[(vii)] for all $k \in \N$, \[a_k=0 \qquad \text{or} \qquad \varepsilon_k\leq \frac{\delta \cdot |a_k|^2}{2\norm{a}_{\ell^2(\C)}^2}.\]
    \end{myenum}
    Then we even have, for all $N \in \N$,
    \begin{align}\label{inequ: lower bound for W_N(f) (simplified)}
      W_N(f)\geq \delta \cdot \sum_{k=N}^\infty |a_k|^2.
    \end{align}
  \end{lemma}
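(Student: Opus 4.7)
The plan is to prove the three assertions in order: $L^2$-convergence of $f$ with its norm bound, the technical lower estimate \eqref{inequ: lower bound for W_N(f) (technical)}, and finally the simplified bound \eqref{inequ: lower bound for W_N(f) (simplified)} under the extra assumptions (vi)--(vii). Convergence is the easy part: by (i), the partial sums $s_n:=\sum_{k=0}^n a_k f_k$ are Cauchy in $L^2(\R^d)$, since
\[\norm{s_n-s_m}_2^2=\bigl|\sum_{k,j=m+1}^n a_k\overline{a_j}\ip{f_k,f_j}\bigr|\leq\sum_{k,j>m}\abs{a_k a_j\ip{f_k,f_j}}\xrightarrow{m\to\infty}0.\]
Their limit is the desired $f$, and the same estimate, taken to the limit with $m=0$, yields the norm bound.

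For the main estimate I would start from $W_N(f)=\sum_{\psi\in\Psi}W_{N-1}(\abs{f*\psi})$ and, by the pairwise disjointness (iii), discard all $\psi$ outside $\bigsqcup_{k\geq n}\Psi(F,k)$. For each remaining $\psi\in\Psi(F,k)$, applying Lemma \ref{lem: Lipschitz-type bounds for W_N}b) to the signals $\abs{f*\psi}$ and $\abs{a_k f_k*\psi}$, together with the pointwise inequality $\bigl|\abs{f*\psi}-\abs{a_k f_k*\psi}\bigr|\leq\bigl|\sum_{j\neq k}a_j f_j*\psi\bigr|$, gives
\[W_{N-1}(\abs{f*\psi})\geq\tfrac{\abs{a_k}^2}{2}W_{N-1}(\abs{f_k*\psi})-\bigl\|\sum_{j\neq k}a_j f_j*\psi\bigr\|_2^2.\]
Summing the main term over $\psi\in\Psi(F,k)$ and invoking (iv) together with $W_{N-1}(g)\leq\norm{g}_2^2$ (from Proposition \ref{prop: Energy decomposition and norm-preservation of S}) produces $\tfrac{\abs{a_k}^2}{2}(W_N(f_k)-\eta_k)$.

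The error term is where the bookkeeping is most delicate and constitutes the main obstacle. Viewing $\bigl(\sum_{\psi\in\Psi(F,k)}\norm{\cdot*\psi}_2^2\bigr)^{1/2}$ as a seminorm on $L^2(\R^d)$, I would apply Minkowski's inequality in $\psi$ followed by Cauchy-Schwarz in $j$ to obtain
\[\sum_{\psi\in\Psi(F,k)}\bigl\|\sum_{j\neq k}a_j f_j*\psi\bigr\|_2^2\leq\norm{a}_{\ell^2(\C)}^2\sum_{j\neq k}\sum_{\psi\in\Psi(F,k)}\norm{f_j*\psi}_2^2.\]
For $j<k$, (v) gives $\sum_{\psi\in\Psi(F,k)}\norm{f_j*\psi}_2^2\leq\eta_k$; for $j>k$, disjointness (iii) places $\Psi(F,k)\subseteq\Psi\setminus\Psi(F,j)$, so (iv) applied to $f_j$ gives the bound $\eta_j$. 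The total error is therefore at most $\norm{a}_{\ell^2(\C)}^2(k\eta_k+\sum_{j>k}\eta_j)$. Combined with the defect $\tfrac{\abs{a_k}^2}{2}\eta_k$ from the main term and using $\abs{a_k}^2\leq\norm{a}_{\ell^2(\C)}^2$, both contributions fit inside $\norm{a}_{\ell^2(\C)}^2\varepsilon_k$ thanks to the identity $\varepsilon_k=(k+1)\eta_k+\sum_{j>k}\eta_j$; the factor $2$ in the statement provides the needed slack. Summation over $k\geq n$ then yields \eqref{inequ: lower bound for W_N(f) (technical)}.

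For the simplified bound I specialize to $n=N\in\N$. Since $W_N(g)$ is nonincreasing in $N$ (immediate from \eqref{eq: energy decomp}), assumption (vi) gives $W_N(f_k)\geq W_k(f_k)\geq 4\delta$ for every $k\geq N$, whence $\sum_{k\geq N}\tfrac{\abs{a_k}^2}{2}W_N(f_k)\geq 2\delta\sum_{k\geq N}\abs{a_k}^2$. Condition (vii) directly yields $\sum_{k\geq N}2\norm{a}_{\ell^2(\C)}^2\varepsilon_k\leq\delta\sum_{k\geq N}\abs{a_k}^2$, and substituting both bounds into \eqref{inequ: lower bound for W_N(f) (technical)} delivers $W_N(f)\geq\delta\sum_{k\geq N}\abs{a_k}^2$. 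The rest is standard Parseval/frame manipulation; the only genuinely subtle point in the argument is confirming that the precise weighting $(k+1)\eta_k+\sum_{j>k}\eta_j$ in $\varepsilon_k$ is exactly what is needed to dominate both the main-term defect and all off-diagonal cross-terms simultaneously.
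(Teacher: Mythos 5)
Your proposal is correct and follows essentially the same route as the paper's proof: restrict the outer sum over $\Psi$ to the disjoint sets $\Psi(F,k)$, apply Lemma \ref{lem: Lipschitz-type bounds for W_N}b) to $\abs{f*\psi}$ versus $\abs{a_kf_k*\psi}$, control the main-term defect via (iv) and the cross-terms via (v) for $j<k$ and via (iii)+(iv) for $j>k$, then specialize $n=N$ with (vi), (vii) and the monotonicity of $W_N$. The only (harmless) deviation is in the error bookkeeping, where your Minkowski--Cauchy--Schwarz argument over all $j\neq k$ at once yields $\norm{a}_{\ell^2(\C)}^2(k\eta_k+\sum_{j>k}\eta_j)$, slightly sharper than the paper's $2\norm{a}_{\ell^2(\C)}^2(k\eta_k+\sum_{j>k}\eta_j)$ obtained by splitting into $j<k$ and $j>k$; both fit within the stated $2\norm{a}_{\ell^2(\C)}^2\varepsilon_k$.
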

  
  \begin{proof}
    The series defining $f$ converges if 
    \[\norm{\sum_{k=n}^m a_k f_k}_2 \xrightarrow{n,m \to \infty} 0.\] 
    Expanding the latter norm squared yields
    \begin{align}\label{inequ: (2)}
      \norm{\sum_{k=n}^m a_k f_k}_2^2 = \sum_{k,j=n}^m a_k \overline{a_j} \cdot \ip{f_k,f_j} \leq \sum_{k,j=n}^\infty \left|a_k a_j \cdot \ip{f_k,f_j}\right|.
    \end{align}
    Being the remainder of a convergent series by assumption (i), the right-hand side of \eqref{inequ: (2)} converges to $0$, so that $f$ is in fact a well-defined element of $L^2(\R^d)$. Setting $n=0$ in \eqref{inequ: (2)}, we also obtain the claimed bound for the (squared) norm of $f$ as $m \to \infty$.
  
    Let us now turn to the proof of \eqref{inequ: lower bound for W_N(f) (technical)}. Here, we first note that the recursive nature of $W_{N}$ and Lemma \ref{lem: Lipschitz-type bounds for W_N} b) imply
    \begin{align}\label{inequ: (1)}
      W_{N}(f) &= \sum_{\psi \in \Psi} W_{N-1}(|f*\psi|) \nonumber\\
      &\geq \sum_{k=n}^\infty \sum_{\psi \in \Psi_k}W_{N-1}(|f*\psi|)\nonumber\\
      &\geq \sum_{k=n}^\infty \sum_{\psi \in \Psi_k} \left(\frac{1}{2} \cdot W_{N-1}(|(a_kf_k)*\psi|)-\norm{|f*\psi|-|(a_kf_k)*\psi|}_2^2\right) \nonumber\\
      &\geq \sum_{k=n}^\infty \sum_{\psi \in \Psi_k} \left(\frac{|a_k|^2}{2} \cdot W_{N-1}(|f_k*\psi|)-\norm{(f-a_kf_k)*\psi}_2^2\right) \nonumber\\
      &= \sum_{k=n}^\infty \left(\frac{|a_k|^2}{2} \sum_{\psi \in \Psi_k}  W_{N-1}(|f_k*\psi|) - \text{Rest}_k\right).
    \end{align}
    As a consequence of our assumptions (iii)-(v), the remainder is bounded by
    \begin{align*}
      \text{Rest}_k &= \sum_{\psi \in \Psi_k} \norm{\left(\sum_{j=0,j\neq k}^\infty a_j f_j\right)*\psi}_2^2 \\
      &\leq \sum_{\psi \in \Psi_k} \left(\sum_{j=0,j\neq k}^\infty |a_j| \cdot \norm{f_j*\psi}_2\right)^2 \\
      &\leq 2 \sum_{\psi \in \Psi_k} \left[\left(\sum_{j=0}^{k-1} |a_j|\cdot \norm{f_j*\psi}_2\right)^2 + \left(\sum_{j=k+1}^{\infty} |a_j|\cdot \norm{f_j*\psi}_2\right)^2\right] \\
      &\leq 2 \norm{a}_{\ell^2(\C)}^2 \cdot \sum_{\psi \in \Psi_k} \left(\sum_{j=0}^{k-1} \norm{f_j*\psi}_2^2 + \sum_{j=k+1}^{\infty} \norm{f_j*\psi}_2^2\right)\\
      &\leq 2 \norm{a}_{\ell^2(\C)}^2 \cdot \left(k \cdot \eta_k + \sum_{j=k+1}^{\infty} \eta_j\right).
    \end{align*}
    Moreover, for every $k \geq n$,
    \begin{align*}
      \sum_{\psi \in \Psi_k} W_{N-1}(|f_k*\psi|)
      &= \sum_{\psi \in \Psi} W_{N-1}(|f_k*\psi|)- \sum_{\psi \in \Psi \setminus \Psi_k} W_{N-1}(|f_k*\psi|) \\
      &= W_{N}(f_k)-\sum_{\psi \in \Psi \setminus \Psi_k} W_{N-1}(|f_k*\psi|)\\
      &\geq W_{N}(f_k)-\eta_k.
    \end{align*}
    Substituting this and the bound for $\text{Rest}_k$ into \eqref{inequ: (1)} yields
    \begin{align*}
      W_{N}(f)&\geq \sum_{k=n}^\infty \left(\frac{|a_k|^2}{2} \sum_{\psi \in \Psi_k}  W_{N-1}(|f_k*\psi|) - \text{Rest}_k\right) \\
      &\geq \sum_{k=n}^\infty \left[\frac{|a_k|^2}{2} \cdot \left(W_{N}(f_k)-\eta_k\right) - 2 \norm{a}_{\ell^2(\C)}^2 \cdot \left(k\cdot \eta_k + \sum_{j=k+1}^{\infty} \eta_j\right)\right] \\
      &\geq \sum_{k=n}^\infty \left[\frac{|a_k|^2}{2} \cdot W_{N}(f_k) - 2 \norm{a}_{\ell^2(\C)}^2 \cdot \left((k+1) \cdot \eta_k + \sum_{j=k+1}^{\infty} \eta_j\right)\right]\\
      &= \sum_{k=n}^\infty \left(\frac{|a_k|^2}{2}\cdot W_{N}(f_k) - 2 \norm{a}_{\ell^2(\C)}^2 \cdot\varepsilon_k\right).
    \end{align*}
  
    Finally, let us derive \eqref{inequ: lower bound for W_N(f) (simplified)} from \eqref{inequ: lower bound for W_N(f) (technical)} under the additional assumptions (vi) and (vii). It suffices to show the statement under the assumption $a_k\neq 0$ for all $k \in \N_0$. The general statement then follows by throwing out all zero-terms from the input.
    Choosing $n=N$ in \eqref{inequ: lower bound for W_N(f) (technical)} and exploiting the upper bound for $\varepsilon_k$, $k \in \N$, we find that
    \begin{align}\label{inequ: (3)}
      W_N(f)\geq \sum_{k=N}^\infty \left(\frac{|a_k|^2}{2} \cdot W_{N}(f_k) - 2 \norm{a}_{\ell^2(\C)}^2 \cdot \varepsilon_k\right)
      \geq \sum_{k=N}^\infty \left(\frac{|a_k|^2}{2} \cdot W_{N}(f_k) - \delta \cdot |a_k|^2\right).
    \end{align}
    Since $W_N(f)$ is nonincreasing in $N$ for fixed $f \in L^2(\R^d)$, we have, for all $k \geq N$, 
    \[W_N(f_k)\geq W_k(f_k)\geq 4\delta.\] 
    Substituting this into \eqref{inequ: (3)} concludes the proof.
  \end{proof}
  
  We are now ready to state and prove a first theorem about arbitrarily slow energy propagation in scattering networks. This is a generalization of the idea outlined at the beginning of this section.
  
  \begin{theorem}\label{thm: Arbitrarily slow convergence of W_N}
    Let $F=(f_m)_{m\in \N_0} \subset L^2(\R^d)$. Suppose that for all tolerances $(\eta_k)_{k \in \N_0} \in \R_{>0}^{\N_0}$ there exist finite sets $\Psi_k\subseteq \Psi$, $k \in \N_0$, and a subsequence $(f_{m_k})_{k \in \N_0}$ of $F$ with $m_0=0$ such that the following hold:
    \begin{myenum}
      \item[(i)] The expression $C_F:=\sup_{m \in \N_0}\norm{f_m}_2^2$ is finite, i.e., $F$ is a bounded sequence.
      \item[(ii)] We have, for all $j,k \in \N_0$ with $j<k$, 
      \begin{align*}
        \left|\ip{f_{m_k},f_{m_j}}\right|\leq \eta_k.
      \end{align*}
      \item[(iii)] We have $\Psi_{k_1}\cap \Psi_{k_2}=\emptyset$ whenever $k_1\neq k_2$. 
      \item[(iv)] We have, for every $k \in \N_0$,
      \begin{align*}
        \sum_{\psi \in \Psi\setminus\Psi_k} \norm{f_{m_k}*\psi}_2^2 \leq \eta_k.
      \end{align*}  
      \item[(v)] We have, for every $k \in \N_0$ and $0\leq j < k$,
      \begin{align*}
        \sum_{\psi \in \Psi_k} \norm{f_{m_j}*\psi}_2^2 \leq \eta_k.
      \end{align*}
      \item[(vi)] There exists $\delta>0$ such that, for all $k \in \N$, $W_k(f_{m_k})\geq 4 \delta$.
    \end{myenum}
  
    If $E=(E_N)_{N \in \N}\in \R_{>0}^\N$ is a nonincreasing null-sequence, then there exists $f_E \in L^2(\R^d)$ that satisfies
    \begin{myenum}
      \item[a)] $\norm{f_E}_2^2\leq 2 C_F \cdot (1+E_1)$,
      \item[b)] $\norm{f_E-f_0}_2^2\leq 2 C_F \cdot E_1$, and
      \item[c)] for all $N\in \N$, $W_N(f_E)\geq \delta \cdot E_N$.
    \end{myenum}
  \end{theorem}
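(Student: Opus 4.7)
The plan is to apply Lemma \ref{lem: approx additivity} to a series $f_E := \sum_{k=0}^\infty a_k f_{m_k}$ whose coefficients are tailored to the prescribed decay profile $E$; the crucial feature is that the theorem's hypothesis lets us fix the tolerances $(\eta_k)$ \emph{before} the subsequence $(f_{m_k})$ is produced. After replacing $E$ by a strictly decreasing perturbation $E_N + \gamma_N$ for a small strictly decreasing null-sequence $\gamma > 0$ with $\gamma_1$ sufficiently small compared to $E_1$ (say $\gamma_1 \leq E_1/4$), we may assume without loss of generality that $E$ itself is strictly decreasing, so that below all coefficients $a_k$ will be strictly positive; the resulting extra slack in $E_1$ will be absorbed by the factor $2$ in claims (a) and (b).

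Set $a_0 := 1$ and $a_k := \sqrt{E_k - E_{k+1}} > 0$ for $k \geq 1$. A telescoping argument using $E_N \to 0$ gives $\sum_{k \geq N} a_k^2 = E_N$ for every $N \geq 1$, and hence $\norm{a}_{\ell^2(\N_0;\C)}^2 = 1 + E_1$. Next, choose tolerances $(\eta_k)_{k \in \N_0} \subseteq \R_{>0}$ so small that all three of the following hold simultaneously: $\sum_k \varepsilon_k < \infty$, where $\varepsilon_k := (k+1)\eta_k + \sum_{j>k} \eta_j$; the per-index bound $\varepsilon_k \leq \delta\, a_k^2 / (2 \norm{a}_{\ell^2(\N_0;\C)}^2)$ for every $k \geq 1$; and the off-diagonal tail bound $\sum_{k \neq j} |a_k a_j|\, \eta_{\max(k,j)} \leq C_F E_1 / 4$. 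An explicit workable choice is $\eta_k := c \cdot 2^{-k} a_k^2$ for a sufficiently small constant $c = c(\delta, E, C_F) > 0$.

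Invoking the hypothesis with these tolerances produces finite sets $\Psi(F, k) \subseteq \Psi$ and a subsequence $(f_{m_k})$ with $m_0 = 0$ satisfying conditions (i)--(vi) of the theorem. These translate directly to the hypotheses of Lemma \ref{lem: approx additivity} for the data $(f_{m_k})_{k\in\N_0}$ and $(a_k)_{k\in\N_0}$: assumption (i) of the lemma follows from the diagonal estimate $\sum_k a_k^2 \norm{f_{m_k}}_2^2 \leq C_F \norm{a}_{\ell^2(\N_0;\C)}^2$ combined with the off-diagonal estimate $\sum_{k \neq j} |a_k a_j \ip{f_{m_k}, f_{m_j}}| \leq \sum_{k \neq j} |a_k a_j|\, \eta_{\max(k,j)}$, where condition (ii) of the theorem supplies the inner-product bound; assumptions (ii)--(vi) of the lemma are immediate translations of the convergence of $\sum \varepsilon_k$ and of conditions (iii)--(vi) of the theorem; and (vii) of the lemma is exactly the per-index tolerance bound imposed above. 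Conclusion \eqref{inequ: lower bound for W_N(f) (simplified)} of the lemma, combined with $\sum_{k \geq N} a_k^2 = E_N$, yields $W_N(f_E) \geq \delta E_N$ for all $N \in \N$, proving claim (c). The norm bound $\norm{f_E}_2^2 \leq \sum_{k,j} |a_k a_j \ip{f_{m_k}, f_{m_j}}|$ provided by the lemma combined with the two contributions above gives $\norm{f_E}_2^2 \leq C_F(1 + E_1) + C_F E_1 / 4 \leq 2 C_F (1 + E_1)$, proving (a); the same argument applied to $f_E - f_0 = \sum_{k \geq 1} a_k f_{m_k}$, which removes only the index-$0$ diagonal contribution, yields $\norm{f_E - f_0}_2^2 \leq C_F E_1 + C_F E_1/4 \leq 2 C_F E_1$, proving (b).

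The main obstacle is the simultaneous satisfaction of the three tolerance constraints: condition (vii) of Lemma \ref{lem: approx additivity} forces $\eta_k$ to decay at least as fast as $a_k^2 = E_k - E_{k+1}$, which itself can be arbitrarily fast since $E$ is an arbitrary null-sequence. This is precisely why the order of quantifiers in the theorem's hypothesis --- tolerances first, subsequence afterwards --- is essential, and why the conclusion cannot be inferred from a single a priori choice of $(f_m)$. The remaining book-keeping (the telescoping identity, verifying the off-diagonal bound via condition (ii), and absorbing the strict-monotonicity perturbation of $E$ under the factor $2$) is routine.
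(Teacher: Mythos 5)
Your proposal follows the paper's own proof essentially verbatim in structure: set $a_0=1$, $a_k=\sqrt{E_k-E_{k+1}}$, fix the tolerances $(\eta_k)$ \emph{before} invoking the hypothesis to obtain the subsequence, and then feed everything into Lemma \ref{lem: approx additivity}; the telescoping identity, the diagonal/off-diagonal split for the norm bounds, and the use of \eqref{inequ: lower bound for W_N(f) (simplified)} for claim c) all match. Your preliminary perturbation making $E$ strictly decreasing is a genuine (small) improvement: if $E_k=E_{k+1}$ for some $k$ then $a_k=0$ and hypothesis (vii) of Lemma \ref{lem: approx additivity} degenerates to $\varepsilon_k\leq 0$, which no strictly positive tolerances can achieve; the paper's proof passes over this silently, and your fix (or, alternatively, simply discarding the zero-coefficient terms) is needed, and your bookkeeping showing the slack $\gamma_1\leq E_1/4$ is absorbed by the factor $2$ checks out. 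The one flaw is your parenthetical ``explicit workable choice'' $\eta_k:=c\cdot 2^{-k}a_k^2$ with a single constant $c$: the constraint $\varepsilon_k\leq \delta a_k^2/(2\norm{a}^2)$ involves the tail $\sum_{j>k}\eta_j$, and since the increments $a_j^2=E_j-E_{j+1}$ for $j>k$ can be vastly larger than $a_k^2$, no uniform $c$ works in general (take $E$ with alternating tiny and large drops). This does not break the proof --- the constraints are satisfiable by a recursive choice such as $\eta_j\leq 2^{-j-2}(j+1)^{-1}\min_{k\leq j}B_k$ with $B_k:=\delta a_k^2/(2\norm{a}^2)$, and your main argument only asserts that some sufficiently small choice exists --- but the explicit formula as stated should be removed or corrected.
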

  
  \begin{proof}
    Define
    \begin{align*}
      (a_k)_{k \in \N}:=\left(\sqrt{E_k-E_{k+1}}\right)_{k \in \N},
    \end{align*} 
    and set $a_0:=1$. A telescoping argument gives, for every $N \in \N$,
    \begin{align*}
      \sum_{k=N}^\infty |a_k|^2 = \sum_{k=N}^\infty (E_k-E_{k+1}) = E_N.
    \end{align*}
    In particular, $a=(a_k)_{k \in \N_0} \in \ell^2(\N_0;\C)$ with $\norm{a}_{\ell^2(\N_0;\C)}^2=1+E_1$.
    
    We want to apply Lemma \ref{lem: approx additivity} to a subsequence $(f_{m_k})_{k \in \N_0}$ of $F$, which implies that $f_E$ can be chosen of the type 
    \[f_E:=\sum_{k=0}^\infty a_k f_{m_k}.\] 
    Here, $(f_{m_k})_{k \in \N_0}$ is the subsequence of $F$ from the prerequisites of this theorem that depends on tolerance levels $(\eta_k)_{k \in \N_0}$, which we now specify.
  
    Recall from Lemma \ref{lem: approx additivity} that $f_E$ converges in $L^2(\R^d)$ with an upper bound for its squared norm given by 
    \[\norm{f_E}_2^2\leq \sum_{k,j=0}^\infty\left|a_ka_j \cdot \ip{f_{m_k},f_{m_j}}\right|.\] 
    Likewise,   
    \[\norm{f_E-f_0}_2^2=\norm{\sum_{k=1}^\infty a_k f_{m_k}}_2^2 \leq \sum_{k,j=1}^\infty\left|a_ka_j \cdot \ip{f_{m_k},f_{m_j}}\right|.\] 
    We bound the latter series using the approximate orthogonality relation (ii). Repeatedly applying the Cauchy-Schwarz inequality yields, for all $n \in \N_0$,
    \begin{align*}\label{convergence of f}
      \sum_{k,j=n}^\infty&\left|a_ka_j \cdot \ip{f_{m_k},f_{m_j}}\right| \\
      & = \sum_{k=n}^\infty |a_k| \cdot \left(\sum_{j=n}^{k-1} |a_j| \cdot \underbrace{\left|\ip{f_{m_k},f_{m_j}}\right|}_{\leq \eta_k} 
      + |a_k| \cdot \underbrace{\left|\ip{f_{m_k},f_{m_k}}\right|}_{\leq C_F} 
      + \sum_{j=k+1}^\infty |a_j| \cdot \underbrace{\left|\ip{f_{m_k},f_{m_j}}\right|}_{\leq \eta_j}\right) \\
      & \leq C_F \cdot \sum_{k=n}^\infty |a_k|^2 
      +  \left(\sum_{j=n, j\neq k}^\infty |a_j|^2\right)^{\frac{1}{2}} \cdot \left(\sum_{k=n}^\infty |a_k| \cdot \left[(k-n)\cdot \eta_k^2 
      + \sum_{j=k+1}^\infty \eta_j^2\right]^{\frac{1}{2}}\right) \\
      & \leq C_F \cdot \sum_{k=n}^\infty |a_k|^2 
      +  \left(\sum_{j=n}^\infty |a_j|^2\right) \cdot \left(\sum_{k=n}^\infty \left[(k-n)\cdot \eta_k^2 
      + \sum_{j=k+1}^\infty \eta_j^2\right]\right)^{\frac{1}{2}}.       
    \end{align*}
    Thus, if we choose the tolerances $(\eta_k)_{k \in \N_0} \in \R_{>0}^{\N_0}$ small enough such that 
    \[\sum_{k=0}^\infty \left[k\cdot \eta_k^2 
    + \sum_{j=k+1}^\infty \eta_j^2\right]\leq C_F^2,\]
    then we obtain 
    \begin{align*}
      \sum_{k,j=n}^\infty\left|a_ka_j \cdot \ip{f_{m_k},f_{m_j}}\right|\leq 2C_F \cdot \left(\sum_{j=n}^\infty |a_j|^2\right) = \begin{cases}
        2C_F \cdot (1+E_1) &\text{ if } n=0\\
        2C_F \cdot E_1 &\text{ if } n=1
      \end{cases}.
    \end{align*}
    This proves the norm bounds in a) and b).
  
    As in Lemma \ref{lem: approx additivity}, define $\varepsilon_k:=(k+1)\cdot \eta_k+\sum_{j=k+1}^\infty \eta_j$ for all $k \in \N_0$. Since the tolerances $(\eta_k)_{k \in \N_0} \in \R_{>0}^{\N_0}$ are free to choose, we can additionally assume 
    that the series $\sum_{k=0}^\infty \varepsilon_k$ converges and that it holds, for all $k \in \N_0$,
    \[a_k=0 \qquad \text{or} \qquad \varepsilon_k\leq \frac{\delta \cdot |a_k|^2}{2\norm{a}_{\ell^2(\C)}^2}.\] By applying Lemma \ref{lem: approx additivity}, we conclude that, for all $N\in \N$,
    \[W_N(f_E)\geq \delta \cdot \sum_{k=N}^\infty |a_k|^2 = \delta \cdot E_N.\]
  \end{proof}
  
  Under certain assumptions on the high-pass filters $\Psi$, the latter theorem  guarantees that for any nonincreasing null-sequence $E=(E_N)_{N \in \N}\in \R_{>0}^\N$ there exists at least one input signal $f_E \in L^2(\R^d)$ whose network energy propagates slower across the network layers than $E$. Specifically, this implies that the set 
  \[\left\{f \in L^2(\R^d) ~\middle|~ W_N(f) \notin \mathcal{O}(E_N)\right\}\]
  is nonempty. Our next result provides a sufficient criterion under which this set is even dense in $L^2(\R^d)$.
  
  \begin{proposition}\label{prop: density of adversarial examples}
    Let $E=(E_N)_{N \in \N}\in \R_{>0}^{\N}$ be a nonincreasing null-sequence. Assume that for all $g \in L^2(\R^d)$ there are $ C_g,\delta_g>0$ such that the following holds: 
    For every nonincreasing null-sequence $E^\prime=(E_N^\prime)_{N \in \N}\in \R_{>0}^{\N}$ with $\liminf_{N \to \infty} \nicefrac{E_N}{E_N^\prime}=0$ there exists $f_{g,E^\prime} \in L^2(\R^d)$ such that
    \begin{myenum}
      \item[(i)] $\norm{f_{g,E^\prime}-g}_2^2\leq C_g \cdot E_1^\prime$, and
      \item[(ii)] $W_N(f_{g,E^\prime})\geq \delta_g \cdot E_N^\prime$ for all $N \in \N$.
    \end{myenum}
    Then, 
    \[Y_E:=\left\{f \in L^2(\R^d) ~\middle|~ W_N(f) \in \mathcal{O}(E_N)\right\}\]
    is a countable union of nowhere dense sets in $L^2(\R^d)$. In particular,  $L^2(\R^d) \setminus Y_E$ is dense in $L^2(\R^d)$.
  \end{proposition}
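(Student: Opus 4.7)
The plan is to realize $Y_E$ as a countable union of closed sets with empty interior; since $L^2(\R^d)$ is a complete metric space, the Baire category theorem then yields density of $L^2(\R^d)\setminus Y_E$. Membership in $Y_E$ amounts to a uniform bound $W_N(f)\leq C\cdot E_N$ for all sufficiently large $N$, which is a closed condition thanks to the continuity of each $W_N$ (Corollary \ref{cor: W_N is continuous}).

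Accordingly, I would unfold the $\mathcal{O}$-notation to write
\[
Y_E \;=\; \bigcup_{C,\, N_0 \in \N} A_{C,N_0},\qquad A_{C,N_0} \;:=\; \left\{f \in L^2(\R^d) \;\middle|\; W_N(f)\leq C\cdot E_N \text{ for all } N \geq N_0\right\}.
\]
Each $A_{C,N_0} = \bigcap_{N\geq N_0} W_N^{-1}\bigl((-\infty,\, C\cdot E_N]\bigr)$ is closed, and it remains to check that each $A_{C,N_0}$ has empty interior (a closed set with empty interior being nowhere dense).

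Suppose, for contradiction, that some $A_{C,N_0}$ contains an $L^2$-open ball around some $g \in L^2(\R^d)$, i.e., there exists $\varepsilon>0$ with $\{h\in L^2(\R^d) : \|h-g\|_2<\varepsilon\}\subseteq A_{C,N_0}$. Let $C_g,\delta_g>0$ be the constants furnished by the hypothesis at this $g$. The goal is to feed the hypothesis a null-sequence $E' = (E_N')_{N\in\N}$ that decays strictly slower than $E$ (so that $\liminf E_N/E_N'=0$) yet has first entry $E_1'$ small enough that the resulting adversarial signal $f_{g,E'}$ is trapped in the above ball. A concrete, scalable choice is
\[
E_N' \;:=\; \lambda\cdot \sqrt{E_1\cdot E_N},\qquad 0<\lambda<\frac{\varepsilon^2}{C_g\cdot E_1}.
\]
This sequence is nonincreasing, tends to $0$, has $E_1'=\lambda E_1$, and satisfies $E_N/E_N' = \lambda^{-1}\sqrt{E_N/E_1}\to 0$. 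The hypothesis applied to $E'$ yields $f_{g,E'}\in L^2(\R^d)$ with $\|f_{g,E'}-g\|_2^2\leq C_g\cdot E_1' <\varepsilon^2$, so $f_{g,E'}$ lies in the open ball around $g$ and hence in $A_{C,N_0}$, forcing $W_N(f_{g,E'})\leq C\cdot E_N$ for all $N\geq N_0$. Simultaneously, the hypothesis provides $W_N(f_{g,E'})\geq \delta_g\cdot E_N'$ for every $N\in\N$. Combining both inequalities gives $E_N/E_N'\geq \delta_g/C$ for all $N\geq N_0$, directly contradicting $\liminf E_N/E_N'=0$.

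The main obstacle I foresee is calibrating $E'$ to simultaneously satisfy two competing demands—smallness of $E_1'$ to place $f_{g,E'}$ inside the prescribed ball, and decay strictly slower than $E$ to violate the uniform bound $C\cdot E_N$—but the geometric-mean ansatz $E_N'\propto \sqrt{E_1\cdot E_N}$ decouples them cleanly via the free scalar $\lambda$. Once the empty-interior argument is in place, the conclusion that $L^2(\R^d)\setminus Y_E$ is dense follows immediately from Baire's theorem applied in the complete metric space $L^2(\R^d)$.
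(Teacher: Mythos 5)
Your proof is correct and follows essentially the same route as the paper's: a countable decomposition of $Y_E$ into closed sets (yours indexed by $(C,N_0)$, the paper's by a single constant $p$ with the bound required for all $N$, which amounts to the same thing), continuity of $W_N$ to get closedness, the same geometric-mean ansatz $E_N'\propto\sqrt{E_1\cdot E_N}$ scaled to fit the ball to defeat any putative interior point, and the Baire category theorem to conclude.
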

  
  \begin{proof} 
    Define, for all $p \in \N$,
    \[M_p:=\left\{f \in L^2(\R^d) ~\middle|~ \forall N \in \N:~W_N(f) \leq p \cdot E_N\right\},\] 
    and observe that $Y_E$ can be written as countable union of these sets,
    \[Y_E=\bigcup_{p \in \N} M_p.\] Our goal is to show that $M_p$ is nowhere dense in $L^2(\R^d)$ for every $p \in \N$. To this end, let us first recall from Corollary \ref{cor: W_N is continuous} that the nonlinear operator $W_N:L^2(\R^d) \to \R$ is continuous. Thus,
    \[M_p=\bigcap_{N \in \N} \left\{f \in L^2(\R^d) ~\middle|~ W_N(f) \leq p \cdot E_N\right\}\] is closed and it only remains to show that the interior of $M_p$ is empty: 
    Fix $g \in M_p$, $\varepsilon >0$, and define
    \begin{align*}
      E_N^\prime:=
      \begin{cases}
        \frac{\varepsilon^2}{2C_g} &\text{if } N=1\\
        E_1^\prime \cdot \sqrt{\frac{E_N}{E_1}} &\text{if } N \geq 2
      \end{cases}, \quad \text{for } N \in \N.
    \end{align*}
    Clearly, $E^\prime=(E_N^\prime)_{N \in \N}\in \R_{>0}^{\N}$ is a nonincreasing null-sequence that satisfies $\liminf_{N \to \infty} \nicefrac{E_N}{E_N^\prime}=0$. Hence, there is $f_{g,E^\prime}\in L^2(\R^d)$ that satisfies (i) and (ii) from the assumptions of this lemma.
    Thus, there is $N_0 \in \N$ such that 
    \[W_{N_0}(f_{g,E^\prime})\geq\delta_g \cdot E_{N_0}^\prime> p \cdot E_{N_0},\] which means \[f_{g,E^\prime}\in B_{\varepsilon}(g)\setminus M_p,\] 
    where $B_{\varepsilon}(g)$ denotes the open ball with radius $\varepsilon$ and center $g$ in $L^2(\R^d)$ with respect to $\norm{\cdot}_{2}$.
    Since this entails that the interior of $M_p$ is empty, we have shown in total that $Y_E$ is a countable union of nowhere dense sets in $L^2(\R^d)$. Therefore, its complement $L^2(\R^d) \setminus Y_E$ 
    is dense in $L^2(\R^d)$ by the Baire category theorem.
  \end{proof}
  
  For the remainder of this section, we examine the assumptions of Theorem \ref{thm: Arbitrarily slow convergence of W_N} and Proposition \ref{prop: density of adversarial examples}. While the number of conditions on the family $F$ and the high-pass filters $\Psi$ that must be met simultaneously for our results to apply is considerable, we demonstrate that these conditions can in fact be easily satisfied. To provide a concrete reference model, let us next establish that $F$ can be obtained by $L^2$-normalized dilations of a single function.
  
  \begin{lemma}\label{lem: ip of f and dilation of f converges to zero}
    Let $f \in L^2(\R^d)$. If $(A_m)_{m \in \N} \in \GL_d(\R)^\N$ 
    satisfies $\lim_{m \to \infty}\sigma_{\text{min}}(A_m)=\infty$, then
    \[\lim_{m \to \infty} \left|\ip{f,D_{A_m}^2f}\right|=0.\]
  \end{lemma}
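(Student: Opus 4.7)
The plan is to pass to the Fourier side via Plancherel, where the hypothesis $\sigma_{\min}(A_m)\to\infty$ manifests itself as $A_m^{-T}$ being a contraction, and then to combine an explicit estimate for signals with compactly supported Fourier transform with a density argument.

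First, I would record the intertwining relation $\widehat{D_A^2 f} = D_{A^{-T}}^2\hat f$, which follows from a standard change of variables in the Fourier integral. Together with Plancherel's theorem and unitarity of $D_{A_m}^2$ on $L^2(\R^d)$ (as used already in Lemma \ref{lemma: commuting U[p] with D_A}), this yields
\[
\ip{f, D_{A_m}^2 f} = \ip{\widehat f, D_{A_m^{-T}}^2 \widehat f} = |\det(A_m)|^{-1/2}\int_{\R^d}\widehat f(\xi)\,\overline{\widehat f(A_m^{-T}\xi)}\dxi.
\]
Since $\sigma_{\max}(A_m^{-T}) = 1/\sigma_{\min}(A_m) \to 0$, the map $A_m^{-T}$ contracts every bounded set to the origin, which is the mechanism that will force the integral to vanish.

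Next, I would establish the result first for $f$ with $\supp(\widehat f)\subseteq B_R(0)$. For $m$ large enough that $\sigma_{\min}(A_m)\geq 1$, the integrand is supported in $B_R(0)$, and Cauchy--Schwarz together with the substitution $\eta = A_m^{-T}\xi$ (which has Jacobian $|\det(A_m)|$) and the inclusion $A_m^{-T}B_R(0)\subseteq B_{R/\sigma_{\min}(A_m)}(0)$ gives
\[
|\ip{f, D_{A_m}^2 f}| \leq \norm{\widehat f}_2 \cdot \left(\int_{B_{R/\sigma_{\min}(A_m)}(0)}|\widehat f(\eta)|^2\dnu\right)^{1/2},
\]
where the $|\det(A_m)|^{-1/2}$ and $|\det(A_m)|^{1/2}$ factors cancel. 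Since $R/\sigma_{\min}(A_m)\to 0$ and $\widehat f\in L^2(\R^d)$, dominated convergence forces the right-hand side to tend to $0$.

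Finally, I would extend to arbitrary $f\in L^2(\R^d)$ by density. Given $\varepsilon>0$, choose $g\in L^2(\R^d)$ with $\widehat g$ compactly supported and $\norm{f-g}_2<\varepsilon$. Using unitarity of $D_{A_m}^2$ and Cauchy--Schwarz in the $L^2$ inner product,
\[
|\ip{f,D_{A_m}^2 f}| \leq |\ip{g,D_{A_m}^2 g}| + \norm{f-g}_2\norm{f}_2 + \norm{g}_2\norm{f-g}_2 \leq |\ip{g,D_{A_m}^2 g}| + \varepsilon(2\norm{f}_2+\varepsilon).
\]
The first term tends to zero by the previous step, so $\limsup_{m\to\infty}|\ip{f,D_{A_m}^2 f}|\leq \varepsilon(2\norm{f}_2+\varepsilon)$; letting $\varepsilon\to 0$ gives the claim. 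I do not expect a serious obstacle here; the only care needed is in tracking the determinantal factors against the singular-value estimates, so that the $|\det(A_m)|$ powers cancel rather than blow up.
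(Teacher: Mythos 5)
Your proof is correct, and it is essentially the paper's argument conjugated by the Fourier transform. The paper works directly in the spatial domain: it truncates $f$ to $f_R = f\cdot\mathds{1}_{B_R(0)}$, splits the inner product into two terms, and uses the pointwise bound $\mathds{1}_{B_R(0)}(A_mx)\leq\mathds{1}_{B_{R/\sigma_{\min}(A_m)}(0)}(x)$ together with the vanishing of the $L^2$-mass of $f$ over shrinking balls; you instead pass to the frequency side, prove the statement for band-limited $f$ via the same Cauchy--Schwarz-plus-shrinking-ball mechanism (with the determinant factors cancelling exactly as you track), and then conclude by a standard three-term density argument using the unitarity of $D_{A_m}^2$. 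The key quantitative input -- that $A_m^{-1}$ (or $A_m^{-T}$) contracts a fixed ball to radius $R/\sigma_{\min}(A_m)$ and that the $L^2$-mass of a fixed function over such balls tends to zero -- is identical in both versions, so neither route buys anything over the other; your Plancherel detour is harmless but not needed.
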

  \begin{proof}
    The statement is obvious if $f=0$. For the other case, fix $f \in L^2(\R^d) \setminus \{0\}$ and $\varepsilon>0$. By Lebesgue's dominated convergence theorem there is $R>0$ so that $f_R:=f \cdot \mathds{1}_{B_R(0)}$ satisfies
    \[\norm{f-f_R}_2 \leq \frac{\varepsilon}{2\norm{f}_2}.\]
    For $m \in \N$, splitting the inner product gives
    \begin{align}\label{splitting the inner product}
      \left|\ip{f,D_{A_m}^2f}\right|
       \leq \left|\ip{f,D_{A_m}^2f_R}\right|+\left|\ip{f,D_{A_m}^2(f-f_R)}\right|
    \end{align} 
    We obtain an upper bound for the second term by the Cauchy-Schwarz inequality, 
    \begin{align}\label{estimate of second term}
      \left|\ip{f,D_{A_m}^2(f-f_R)}\right| \leq \norm{f}_2 \cdot \norm{D_{A_m}^2(f-f_R)}_2 = \norm{f}_2 \cdot \norm{f-f_R}_2 \leq \frac{\varepsilon}{2}.
    \end{align}
    We now turn our attention to the first term in \eqref{splitting the inner product}. As can be seen from a singular value decomposition of $A_m$, we have, for all $x \in \R^d$,
    \begin{align*}
      \norm{A_mx}_2 \geq \sigma_{\text{min}}(A_m) \cdot \norm{x}_2.
    \end{align*}
    Since by assumption $\lim_{m \to \infty}\sigma_{\text{min}}(A_m)=\infty$, we find that, for all $x \in \R^d \setminus \{0\}$,
    \begin{align*}
      \mathds{1}_{B_R(0)}(A_mx) \leq \mathds{1}_{B_\frac{R}{\sigma_{\text{min}}(A_m)}(0)}(x) \xrightarrow{m \to \infty} 0. 
    \end{align*}
    Hence, by Lebesgue's dominated convergence theorem there is $m_0 \in \N$ so that, for all $m \geq m_0$,
    \begin{align*}
      \norm{f \cdot \mathds{1}_{B_\frac{R}{\sigma_{\text{min}}(A_m)}(0)}}_2 \leq \frac{\varepsilon}{2 \norm{f}_2}.
    \end{align*} 
    Thus, for all $m \geq m_0$,
    \begin{align}\label{estimate of first term}
      \left|\ip{f,D_{A_m}^2f_R}\right| & \leq
      \int_{\R^d} |f(x)| \cdot |D_{A_m}^2f(x)| \cdot \mathds{1}_{B_R(0)}(A_mx) \dx \nonumber \\
      & \leq \int_{\R^d} |f(x)| \cdot |D_{A_m}^2f(x)| \cdot \mathds{1}_{B_\frac{R}{\sigma_{\text{min}}(A_m)}(0)}(x) \dx \nonumber        \\
      & \leq \norm{f \cdot \mathds{1}_{B_\frac{R}{\sigma_{\text{min}}(A_m)}(0)}}_2 \cdot \norm{D_{A_m}^2f}_2 \leq \frac{\varepsilon}{2}.
    \end{align}
    Plugging \eqref{estimate of second term} and \eqref{estimate of first term} into \eqref{splitting the inner product} concludes the proof.
  \end{proof}
  
  The following lemma shows that $F$ satisfies assumptions (i)--(v) from Theorem \ref{thm: Arbitrarily slow convergence of W_N} if it is generated by $L^2$-normalized matrix-dilations of a fixed function for matrices that asymptotically become increasingly expansive.
  
  \begin{lemma}\label{lem: Matrix dilations satisfy assumptions for approx add}
    Let $(A_m)_{m \in \N} \in \GL_d(\R)^\N$ be so that $\lim_{m \to \infty}\sigma_{\text{min}}(A_m)=\infty$. Let $f \in L^2(\R^d)$, and define $f_m:=D_{A_m}^2f$, $m \in \N$. Finally, choose $f_0 \in \{0,f\}$.
    
    Then, $F=(f_m)_{m \in \N_0}$ satisfies assumptions (i)--(v) from Theorem \ref{thm: Arbitrarily slow convergence of W_N}.
  \end{lemma}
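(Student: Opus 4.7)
The plan is to construct the subsequence $(m_k)_{k \in \N_0}$ with $m_0 = 0$ and the finite filter sets $\Psi(F,k)$ inductively, verifying the conditions (i)--(v) of Theorem~\ref{thm: Arbitrarily slow convergence of W_N} along the way. Condition (i) is immediate: unitarity of $D_{A_m}^2$ on $L^2(\R^d)$ gives $\norm{f_m}_2 = \norm{f}_2$ for all $m \geq 1$ and $\norm{f_0}_2 \leq \norm{f}_2$, so $C_F = \norm{f}_2^2$. For condition (ii), I will note that $(D_A^2)^* = D_{A^{-1}}^2$ and $D_B^2 D_C^2 = D_{CB}^2$ give $\ip{f_{m_k}, f_{m_j}} = \ip{D^2_{A_{m_k} A_{m_j}^{-1}} f, f}$; the estimate $\sigma_{\text{min}}(A_{m_k} A_{m_j}^{-1}) \geq \sigma_{\text{min}}(A_{m_k}) / \sigma_{\text{max}}(A_{m_j}) \to \infty$ (as $k \to \infty$ with $j$ fixed) together with Lemma~\ref{lem: ip of f and dilation of f converges to zero} yields $|\ip{f_{m_k}, f_{m_j}}| \to 0$, which can be made smaller than $\eta_k$ by taking $m_k$ sufficiently large.

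Conditions (iv) and (v) rely on the following key auxiliary fact: for any fixed $\psi \in L^1(\R^d) \cap L^2(\R^d)$, one has $\norm{f_m * \psi}_2 \to 0$ as $m \to \infty$. The proof is a straightforward application of dominated convergence: Plancherel and the substitution $\eta = A_m^{-T}\xi$ yield
\[\norm{f_m * \psi}_2^2 \;=\; \int_{\R^d} |\widehat{f}(\eta)|^2 \, |\widehat{\psi}(A_m^T \eta)|^2 \, \mathrm{d}\eta,\]
and since $\widehat{\psi}$ is bounded by $\norm{\psi}_1$ and vanishes at infinity by Riemann--Lebesgue, while $\norm{A_m^T \eta}_2 \geq \sigma_{\text{min}}(A_m) \norm{\eta}_2 \to \infty$ for every $\eta \neq 0$, the integrand tends to zero pointwise almost everywhere under the integrable majorant $\norm{\psi}_1^2 \, |\widehat{f}|^2$. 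In particular, $\sum_{\psi \in S} \norm{f_m*\psi}_2^2 \to 0$ for any fixed finite $S \subseteq \Psi$.

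With these tools in place, the construction is routine. Set $m_0 = 0$ and choose a finite $\Psi(F,0) \subseteq \Psi$ with $\sum_{\psi \notin \Psi(F,0)} \norm{f_0 * \psi}_2^2 \leq \eta_0$ (trivial if $f_0 = 0$; otherwise a tail estimate based on \eqref{ass: Parseval Frame condition}). Inductively, given $m_0 < \ldots < m_{k-1}$ and pairwise disjoint finite $\Psi(F,0), \ldots, \Psi(F,k-1)$, let $T_k := \bigcup_{j<k} \Psi(F,j)$ and enlarge $T_k$ to a finite $S_k \supseteq T_k$ satisfying $\sum_{\psi \notin S_k} \norm{f_{m_j} * \psi}_2^2 \leq \eta_k$ for every $j < k$ (possible because each of the finitely many relevant sums is bounded by $\norm{f}_2^2$). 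Pick $m_k > m_{k-1}$ large enough so that $|\ip{f_{m_k}, f_{m_j}}| \leq \eta_k$ for all $j < k$ and $\sum_{\psi \in S_k} \norm{f_{m_k} * \psi}_2^2 \leq \eta_k/2$ (applying the auxiliary fact to the finite set $S_k$). Finally, take a finite $\Psi(F,k) \subseteq \Psi \setminus S_k$ such that $\sum_{\psi \in (\Psi \setminus S_k) \setminus \Psi(F,k)} \norm{f_{m_k} * \psi}_2^2 \leq \eta_k/2$. Then (iii) holds because $\Psi(F,k) \cap T_k = \emptyset$; (iv) because $\sum_{\psi \notin \Psi(F,k)} \norm{f_{m_k} * \psi}_2^2 \leq \eta_k/2 + \eta_k/2 = \eta_k$; and (v) because $\Psi(F,k) \subseteq \Psi \setminus S_k$ combined with the defining property of $S_k$.

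The main technical obstacle is the auxiliary vanishing statement for $\norm{f_m * \psi}_2$: since no bandlimiting assumption on $\psi$ is available, Riemann--Lebesgue is the only handle on the frequency-side behaviour of a generic $\psi \in L^1(\R^d) \cap L^2(\R^d)$. Once this is in hand, the rest of the argument is essentially bookkeeping to enforce the disjointness of the $\Psi(F,k)$ and to partition the small losses symmetrically between the two halves of condition (iv).
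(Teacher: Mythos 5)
Your proof is correct, and for the substantive part of the lemma --- the simultaneous construction of the subsequence $(m_k)_k$ and the disjoint finite sets $\Psi(F,k)$ satisfying (iii)--(v) --- it takes a genuinely different and leaner route than the paper. The paper localizes $\widehat{f}$ in a spherical shell $\s_{r(\varepsilon),R(\varepsilon)}$ up to $\varepsilon$-error, tracks how dilation by $A_{m}$ moves that shell, and then uses the pointwise Littlewood--Paley identity together with continuity of the $\widehat{\psi}$ and the Riemann--Lebesgue lemma to pick a finite filter family whose Fourier supports essentially cover the shell of $\widehat{f_{m_k}}$ while avoiding the shells of the earlier $f_{m_j}$; conditions (iv) and (v) are then verified by splitting each signal into its in-shell and out-of-shell parts. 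You replace all of this frequency-side geometry by two observations at the level of integrated quantities: the tail of $\sum_{\psi\in\Psi}\norm{g*\psi}_2^2\leq\norm{g}_2^2$ over the complement of a finite set can be made arbitrarily small, and $\norm{f_m*\psi}_2\to 0$ for each fixed $\psi$ (your dominated-convergence argument with the majorant $\norm{\psi}_1^2\,|\widehat{f}|^2$ and the pointwise decay of $\widehat{\psi}(A_m^T\eta)$ is sound). The remaining bookkeeping with $T_k\subseteq S_k$ and the $\eta_k/2$-splitting of (iv) is exactly right, and disjointness (iii) falls out of $\Psi(F,k)\subseteq\Psi\setminus S_k$. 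What your version buys is brevity and robustness: it never needs the uniform lower bound $|\widehat{\chi}|^2+\sum_{\psi\in\widetilde{\Psi}}|\widehat{\psi}|^2>1-\varepsilon$ on a compact shell (a mildly delicate step, since the Littlewood--Paley identity only holds a.e.\ and a compactness argument is needed to extract the finite subfamily). What the paper's version buys is an explicit picture of $\Psi(F,k)$ as the filters living on the frequency shell of $f_{m_k}$, which makes the ``near separation in frequency'' intuition behind Lemma \ref{lem: approx additivity} visible. Conditions (i) and (ii) are handled identically in both arguments; the only cosmetic omission on your side is that you do not spell out the trivial case $f_0=0$ versus $f_0=f$, but your argument covers both.
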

  
  \begin{proof}
    The statement is trivial if $f=0$. Hence, suppose $f \in L^2(\R^d)\setminus\{0\}$ in the following. Furthermore, assumptions (i)-(v) of Theorem \ref{thm: Arbitrarily slow convergence of W_N} are clearly easier to meet if we are in the case that $f_0=0$ (instead of $f_0=f$). Therefore, we only spell out the proof for $f_0=f$. For notational convenience, we set $A_0:=I_d$, so we can write $f_0=D_{A_0}^2f$.
  
    By the normalization of the dilations, $C_F:=\sup_{m\in \N_0}\norm{f_m}_2^2=\norm{f}_2^2$ is finite, i.e., assumption (i) of Theorem \ref{thm: Arbitrarily slow convergence of W_N} is satisfied. We now have to show that for all tolerances $(\eta_k)_{k \in \N_0} \in \R_{>0}^{\N_0}$ there exist finite sets $\Psi_k\subseteq \Psi$, $k \in \N_0$, and a subsequence $(f_{m_k})_{k \in \N_0}$ of $F$ with $m_0=0$ such that assumptions (ii)-(v) of Theorem \ref{thm: Arbitrarily slow convergence of W_N} hold.
    We proceed by choosing the sets $\Psi_k\subseteq \Psi$, $k \in \N_0$, and $({m_k})_{k \in \N}\in \N^\N$ inductively.
    
    Starting with $k=0$ and $m_0=0$, we only need to show that condition (iv) can be satisfied. Since 
    \[\sum_{\psi \in \Psi} \norm{f_{m_0}*\psi}_2^2\leq \norm{f}_2^2<\infty,\] there is a finite subset $\Psi_0\subset \Psi$ such that 
    \begin{align*}
      \sum_{\psi \in \Psi\setminus\Psi_0} \norm{f_{m_0}*\psi}_2^2 \leq \eta_0.
    \end{align*} 
    For the induction step, let $k \in \N$ and assume that $\Psi_0,\ldots,\Psi_{k-1}$ and $m_0,\ldots,m_{k-1}$ are such that (ii)-(v) hold. Our goal is now to obtain $\Psi_k$ and $m_k$ with the desired properties. 
    
    It is easy to meet condition (ii): For all $m \in \N_0$ and $0\leq j <k$, we have
    \begin{align*}
      \left|\ip{f_m,f_{m_j}}\right|=\left|\ip{D_{A_{m_j}^{-1}}^2D_{A_{m}}^2f,f}\right|
      =\left|\ip{D_{A_{m}A_{m_j}^{-1}}^2f,f}\right|.
    \end{align*}
    Moreover,
    \[\sigma_{\text{min}}\left(A_{m}A_{m_j}^{-1}\right)
    \geq \sigma_{\text{min}}\left(A_m\right) \cdot \sigma_{\text{min}}\left(A_{m_j}^{-1}\right)
    =\frac{\sigma_{\text{min}}\left(A_{m}\right)}{\sigma_{\text{max}}\left(A_{m_j}\right)}
    \geq \frac{\sigma_{\text{min}}\left(A_{m}\right)}{\max_{0\leq l<k}\sigma_{\text{max}}\left(A_{m_l}\right)}.\]
    Since the latter bound is uniformly in $j$ for $0\leq j<k$, and $\lim_{m \to \infty}\sigma_{\text{min}}\left(A_{m}\right)=\infty$ by assumption, Lemma \ref{lem: ip of f and dilation of f converges to zero} implies that there exists $m_k^\prime \in \N_0$ such that $\left|\ip{f_{m_k},f_{m_j}}\right|<\eta_k$ holds for all $j \in \{0,\ldots,k-1\}$ whenever we choose $m_k$ larger than $m_k^\prime$. 
  
    The harder part is to satisfy the remaining conditions (iii)-(v) simultaneously. 
    We begin with a simple observation about the impact of matrix-dilations on the frequency content of $f$.
    For $R>r>0$, let 
    \[P_{r,R}f:=\mathcal{F}^{-1}(\mathcal{F}(f)\cdot \mathds{1}_{S_{r,R}})\] 
    be the projection of $f$ on the subspace of $L^2(\R^d)$ whose Fourier transform is supported in $S_{r,R}$. We introduce $\varepsilon \in (0,\nicefrac{1}{2})$, which we will choose later in the proof (small enough). By Lebesgue's dominated convergence theorem, $\widehat{f}$ is concentrated in a spherical shell $S_{r(\varepsilon),R(\varepsilon)}$ up to $\varepsilon$-error for some $R(\varepsilon)>r(\varepsilon)>0$, meaning that
    \[\norm{P_{r(\varepsilon),R(\varepsilon)}f}_2^2
    \geq (1-\varepsilon) \cdot \norm{f}_2^2.\] 
    Now, if $A \in \GL_d(\R)$, then $\widehat{D_A^2f}$ is concentrated in $S_{\sigma_{\text{min}}(A)\cdot r(\varepsilon),\sigma_{\text{max}}(A) \cdot R(\varepsilon)}$ up to $\varepsilon$-error, since
    \begin{align*}
      \norm{P_{\sigma_{\text{min}}(A)\cdot r(\varepsilon),\sigma_{\text{max}}(A) \cdot R(\varepsilon)}D_A^2f}_2^2 
      &= \norm{\widehat{D_A^2f}\cdot \mathds{1}_{S_{\sigma_\text{min}(A)\cdot r(\varepsilon),\sigma_{\text{max}}(A)\cdot R(\varepsilon)}}}_2^2 \\
      &=\int_{\R^d}|\det(A)|^{-1} \cdot |\widehat{f}(A^{-T}\xi)|^2 \cdot \mathds{1}_{S_{\sigma_\text{min}(A)\cdot r(\varepsilon),\sigma_{\text{max}}(A)\cdot R(\varepsilon)}}(\xi) \dxi \\
      &=\int_{\R^d}|\widehat{f}(z)|^2 \cdot \mathds{1}_{S_{\sigma_\text{min}(A)\cdot r(\varepsilon),\sigma_{\text{max}}(A)\cdot R(\varepsilon)}}(A^Tz) \dz \\
      &\geq\int_{\R^d}|\widehat{f}(z)|^2 \cdot \mathds{1}_{S_{r(\varepsilon),R(\varepsilon)}}(z) \dz \\
      &=\norm{\mathcal{F}P_{r(\varepsilon),R(\varepsilon)}f}_2^2 \geq (1-\varepsilon) \cdot \norm{f}_2^2.
    \end{align*}
    Specifically, setting 
    \begin{align*}
      r_{<k}:=\min_{0\leq j<k} \sigma_{\text{min}}(A_{m_j}) \cdot r(\varepsilon) \quad \text{and} \quad R_{<k}:=\max_{0\leq j<k} \sigma_{\text{max}}(A_{m_j}) \cdot R(\varepsilon),
    \end{align*}
    the above frequency concentration estimate entails
    \begin{align*}
      \norm{(\id_{L^2(\R^d)}-P_{r_{<k},R_{<k}})f_{m_j}}_2^2 \leq \varepsilon \cdot \norm{f}_2^2 \quad \text{for } 0\leq j <k.
    \end{align*}
    By the Littlewood-Paley condition \eqref{ass: Littlewood-Paley condition} and the continuity of the Fourier transform of the filters, there is a finite subset $\widetilde{\Psi}\subseteq \Psi$ containing $\bigcup_{j=0}^{k-1}\Psi_j$, and such that
    \begin{align}\label{inequ: (5)}
      |\widehat{\chi}(\xi)|^2+\sum_{\psi \in \widetilde{\Psi}}|\widehat{\psi}(\xi)|^2 > 1-\varepsilon \quad \text{ for all } \xi \in S_{r_{<k},R_{<k}}.
    \end{align} 
    By the Riemann-Lebesgue lemma, there exists $T\geq R_{<k}$ such that
    \begin{align}\label{inequ: (4)}
      |\widehat{\chi}(\xi)|^2+\sum_{\psi \in \widetilde{\Psi}}|\widehat{\psi}(\xi)|^2 < \varepsilon \quad \text{ whenever } |\xi| \geq T.
    \end{align}
    Since $\lim_{m \to \infty}\sigma_{\text{min}}\left(A_{m}\right)=\infty$ by assumption, there is $m_k^{\prime\prime}\in \N$ with $\sigma_{\text{min}}(A_{m}) \cdot r(\varepsilon)\geq T$ for all $m \geq m_k^{\prime\prime}$.
    Define $m_k:=\max\{m_k^\prime,m_k^{\prime\prime}\}$, $r_k:=\sigma_{\text{min}}(A_{m_k}) \cdot r(\varepsilon)$, and $R_k:=\sigma_{\text{max}}(A_{m_k}) \cdot R(\varepsilon)$. Thus,
    \begin{align*}
      \norm{(\id_{L^2(\R^d)}-P_{r_{k},R_{k}})f_{m_k}}_2^2 \leq \varepsilon \cdot \norm{f}_2^2.
    \end{align*}
    Moreover, because of \eqref{inequ: (4)}, the Littlewood-Paley condition \eqref{ass: Littlewood-Paley condition}, and the continuity of the Fourier transform of the filters, there exists a finite subset $\Psi(k,\varepsilon)\subseteq \Psi \setminus \widetilde{\Psi}$ so that
    \begin{align*}
      \sum_{\psi \in \Psi(k,\varepsilon)}|\widehat{\psi}(\xi)|^2 > 1-\varepsilon \quad \text{ for all } \xi \in S_{r_k,R_k},
    \end{align*}
    which itself entails
    \begin{align}\label{inequ: (7)}
      \sum_{\psi \in \Psi \setminus \Psi(k,\varepsilon)}|\widehat{\psi}(\xi)|^2 < \varepsilon \quad \text{ for all } \xi \in S_{r_k,R_k}.
    \end{align}
    At the same time, \eqref{inequ: (5)} and the fact that $\Psi(k,\varepsilon)\subseteq \Psi \setminus \widetilde{\Psi}$, imply
    \begin{align}\label{inequ: (6)}
      \sum_{\psi \in \Psi(k,\varepsilon)}|\widehat{\psi}(\xi)|^2 < \varepsilon \quad \text{ for all } \xi \in S_{r_{<k},R_{<k}}.
    \end{align}
    We claim that $\Psi_k:=\Psi(k,\varepsilon)$ with $\varepsilon:=\frac{\eta_k}{2\norm{f}_2^2}$ does the job. 
  
    In fact, first note that conditions (i)-(iii) are immediately satisfied by the above construction. Concerning condition (iv), we have 
      \begin{align*}
        \sum_{\psi \in \Psi\setminus\Psi_k} \norm{f_{m_k}*\psi}_2^2 
        &= \sum_{\psi \in \Psi\setminus\Psi_k} \norm{(P_{r_k,R_k}f_{m_k})*\psi}_2^2 + \sum_{\psi \in \Psi\setminus\Psi_k} \norm{((\id_{L^2(\R^d)}-P_{r_k,R_k})f_{m_k})*\psi}_2^2 \\
        &\leq \int_{\R^d} |\widehat{f_{m_k}}(\xi)|^2 \cdot \sum_{\psi \in \Psi\setminus\Psi_k} |\widehat{\psi}(\xi)|^2 \cdot \mathds{1}_{S_{r_k,R_k}}(\xi)\dxi + \norm{(\id_{L^2(\R^d)}-P_{r_k,R_k})f_{m_k}}_2^2 \\
        &\leq \varepsilon \cdot \int_{\R^d} |\widehat{f_{m_k}}(\xi)|^2 \dxi + \varepsilon \cdot \norm{f}_2^2 \\
        &= \varepsilon \cdot \norm{f_{m_k}}_2^2+\varepsilon \cdot \norm{f}_2^2 = \eta_k.
      \end{align*}  
      Finally, we obtain condition (v) by proceeding analogously to the previous step. Indeed, for $0\leq j<k$, we have
      \begin{align*}
        \sum_{\psi \in \Psi_k} \norm{f_{m_j}*\psi}_2^2 
        &= \sum_{\psi \in \Psi_k} \norm{(P_{r_{<k},R_{<k}}f_{m_j})*\psi}_2^2 + \sum_{\psi \in \Psi_k} \norm{((\id_{L^2(\R^d)}-P_{r_{<k},R_{<k}})f_{m_j})*\psi}_2^2 \\
        &\leq \int_{\R^d} |\widehat{f_{m_j}}(\xi)|^2 \cdot \sum_{\psi \in \Psi_k} |\widehat{\psi}(\xi)|^2 \cdot \mathds{1}_{S_{r_{<k},R_{<k}}}(\xi)\dxi + \norm{(\id_{L^2(\R^d)}-P_{r_{<k},R_{<k}})f_{m_j}}_2^2 \\
        &\leq \varepsilon \cdot \int_{\R^d} |\widehat{f_{m_j}}(\xi)|^2 \dxi + \varepsilon \cdot \norm{f}_2^2 \\
        &= \varepsilon \cdot \norm{f_{m_j}}_2^2+\varepsilon \cdot \norm{f}_2^2 = \eta_k.
      \end{align*}
  \end{proof}
  
  We summarize our previous results in the case where $F$ is generated by $L^2$-normalized matrix-dilations.
  
  \begin{corollary}\label{cor: arbitrarily slow decay if delta is positive}
    Let $(A_m)_{m \in \N} \in \GL_d(\R)^\N$ be so that $\lim_{m \to \infty}\sigma_{\text{min}}(A_m)=\infty$. Suppose that
    \[\delta:=\inf_{\substack{f \in L^2(\R^d),\\ \norm{f}_2=1}}\inf_{k \in \N} \limsup_{m \to \infty} W_k(D_{A_m}^2f)>0.\] 
    Then, for every $g \in L^2(\R^d)$ there exists a universal constant $C_g>0$ with the following property:
    
    For any nonincreasing null-sequence $E=(E_N)_{N \in \N}\in \R_{>0}^\N$ there is $f_{g,E} \in L^2(\R^d)$ such that
    \begin{myenum}
      \item[a)] $\norm{f_{g,E}}_2^2\leq C_g \cdot (1+E_1)$,
      \item[b)] $\norm{f_{g,E}-g}_2^2\leq C_g \cdot E_1$, and
      \item[c)] $W_N(f_{g,E})\geq \frac{\norm{g}_2^2}{8} \cdot \delta \cdot E_N$ for all $N\in \N$.
    \end{myenum}
    Furthermore,
    \[Y_E:=\left\{f \in L^2(\R^d) ~\middle|~ W_N(f) \in \mathcal{O}(E_N)\right\}\]
    is a countable union of nowhere dense sets in $L^2(\R^d)$. In particular,  $L^2(\R^d) \setminus Y_E$ is dense in $L^2(\R^d)$.
  \end{corollary}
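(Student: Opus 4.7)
The plan is to apply Theorem \ref{thm: Arbitrarily slow convergence of W_N} to a family generated by dilations of $g$ (for $g\neq 0$), verifying its most delicate assumption (vi) via the corollary's quantitative hypothesis on $\delta$. The case $g=0$ is trivial for parts a)-c) since $f_{0,E}:=0$ fulfills all three conditions with any $C_0\geq 1$.

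For $g\neq 0$, I would set $F:=(f_m)_{m\in\N_0}$ with $f_0:=g$ and $f_m:=D_{A_m}^2 g$ for $m\geq 1$. Assumptions (i)-(v) of Theorem \ref{thm: Arbitrarily slow convergence of W_N} are then supplied by Lemma \ref{lem: Matrix dilations satisfy assumptions for approx add}. For (vi), the key observation is the quadratic homogeneity of $W_k$: since $U[p](cf)=|c|\cdot U[p]f$ for every path $p$ and every $c\in\R$, we have $W_k(cf)=|c|^2 W_k(f)$, so the hypothesis on $\delta$ yields
\[\limsup_{m\to\infty} W_k(D_{A_m}^2 g)=\norm{g}_2^2\cdot \limsup_{m\to\infty} W_k(D_{A_m}^2(g/\norm{g}_2))\geq \norm{g}_2^2\cdot\delta\]
for every $k\in\N$. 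In the inductive selection of $(m_k)$ within the proof of Lemma \ref{lem: Matrix dilations satisfy assumptions for approx add}, where $m_k$ is only required to exceed certain finitely many $k$-dependent thresholds, I would additionally demand $W_k(f_{m_k})\geq \norm{g}_2^2\delta/2$; this remains feasible because the latter inequality holds for infinitely many $m$ by the $\limsup$-estimate. This delivers (vi) of Theorem \ref{thm: Arbitrarily slow convergence of W_N} with constant $\norm{g}_2^2\delta/8$, and the theorem then produces $f_{g,E}$ satisfying a)-c) with $C_g:=2\norm{g}_2^2$.

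For the density part, the plan is to invoke Proposition \ref{prop: density of adversarial examples}, whose hypothesis demands, for every $g\in L^2(\R^d)$, constants $C_g,\delta_g>0$ producing suitable adversarial signals $f_{g,E'}$. For $g\neq 0$, the constants $C_g=2\norm{g}_2^2$ and $\delta_g=\norm{g}_2^2\delta/8$ from above do the job. The case $g=0$ is not covered by that construction (since $\delta_g$ would vanish), so I would fix any unit $h\in L^2(\R^d)$ and apply Theorem \ref{thm: Arbitrarily slow convergence of W_N} to $\widetilde F:=(0,D_{A_1}^2 h,D_{A_2}^2 h,\ldots)$, using Lemma \ref{lem: Matrix dilations satisfy assumptions for approx add} with its $f:=h$, $f_0:=0$, and an analogous refined selection of $(m_k)$. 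This yields $f_{0,E'}$ with $\norm{f_{0,E'}}_2^2\leq 2E'_1$ and $W_N(f_{0,E'})\geq (\delta/8)E'_N$, so Proposition \ref{prop: density of adversarial examples} applies with $C_0=2$, $\delta_0=\delta/8$. The main technical point throughout is weaving the extra lower bound on $W_k(f_{m_k})$ into the inductive construction of $(m_k)$ in Lemma \ref{lem: Matrix dilations satisfy assumptions for approx add} without disturbing conditions (i)-(v); this is the only nontrivial adjustment to the existing machinery.
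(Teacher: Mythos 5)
Your proposal is correct and follows essentially the same route as the paper's own proof: reduce to Theorem \ref{thm: Arbitrarily slow convergence of W_N} via Lemma \ref{lem: Matrix dilations satisfy assumptions for approx add}, use the homogeneity $W_k(cf)=|c|^2W_k(f)$ together with the $\limsup$ hypothesis to extract a subsequence with $W_k(f_{m_k})\geq \tfrac{\norm{g}_2^2}{2}\delta$ (folding this extra requirement into the inductive choice of $m_k$, which is harmless since that choice only imposes lower thresholds), and handle $g=0$ for the density claim by substituting a unit vector $h$ with $f_0:=0$. The only cosmetic difference is that you dispatch a)--c) for $g=0$ trivially via $f_{0,E}=0$ before reintroducing the $h$-construction for Proposition \ref{prop: density of adversarial examples}, whereas the paper uses the $h$-construction for both purposes at once.
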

  
  \begin{proof}
    Let us first assume that $g\neq 0$. Define $g_m:=D_{A_m}^2g$ for $m \in \N_0$, where $A_0:=I_d$. From the assumption that $\delta>0$, we conclude that there exists a strictly increasing sequence $(m_k)_{k\in \N_0} \subseteq \N_0$ with $m_0=0$, and such that $W_k\left(\frac{g_{m_k}}{\norm{g}_2}\right)\geq \frac{1}{2} \cdot \delta$, or equivalently $W_k\left(g_{m_k}\right)\geq \frac{\norm{g}_2^2}{2} \cdot \delta$ for all $k \in \N$. The existence of $f_{g,E}$ that satisfies the properties a)--c) then follows from Theorem \ref{thm: Arbitrarily slow convergence of W_N} and Lemma \ref{lem: Matrix dilations satisfy assumptions for approx add}.
  
    For $g=0$, fix any $h \in L^2(\R^d)$ with $\norm{h}_2=1$. Set $h_0:=0\in L^2(\R^d)$ and $h_m:=D_{A_m}^2h$ for $m \in \N$. Again from the assumption that $\delta>0$, we conclude that there is a strictly increasing sequence $(m_k)_{k\in \N_0} \subseteq \N_0$ with $m_0=0$, and such that, for all $k \in \N$, $W_k\left(h_{m_k}\right)\geq \frac{1}{2} \cdot \delta$. Applying Theorem \ref{thm: Arbitrarily slow convergence of W_N} and Lemma \ref{lem: Matrix dilations satisfy assumptions for approx add} to the sequence $(h_{m_k})_{k \in \N}$ then yields $f_{g,E}:=f_{h,E}\in L^2(\R^d)$ that satisfies $\norm{f_{g,E}}_2^2\leq 2E_1$ and, for all $N \in \N$, $W_N(f_{g,E})\geq \frac{1}{8} \cdot \delta \cdot E_N$.
  
    Overall, this also shows that we can apply Proposition \ref{prop: density of adversarial examples}, which completes the proof. 
  \end{proof}
  
  We are now ready to present the main theorem of this section. It states that the prerequisites of Corollary \ref{cor: arbitrarily slow decay if delta is positive} are satisfied if the high-pass filters $\Psi$ admit an inclusive structure when $L^1$-dilated by the matrix sequence $(A_m^{-1})_{m \in \N}$. Specifically, this includes filter banks, where the high-pass filters $\Psi$ arise from the $L^1$-normalized dilations by the matrix sequence $(A_m)_{m \in \N}$ applied to a finite number of generators, as is the case for wavelet-generated filter banks. In these cases, both the result of arbitrarily slow energy propagation and its consequence concerning the density of signals that fail to meet a given decay rate apply.
  
  \begin{theorem}\label{thm: slow propagation if filters are dilation invariant}
    Let $(A_m)_{m \in \N} \in \GL_d(\R)^\N$ be such that the following are true:
    \begin{myenum}
      \item[(i)] We have  
      $\lim_{m \to \infty}\sigma_{\text{min}}(A_m)=\infty$.
      \item[(ii)] The inclusion $\Psi_{m} \subseteq \Psi_{m+1}$ holds for all $m \in \N$, where $\Psi_m:=D_{A_m^{-1}}^1\Psi$.
    \end{myenum}
  
    Then, $\Psi_\infty:=\bigcup_{m \in \N} \Psi_m$ is a semi-discrete Parseval frame. Moreover, for all $f \in L^2(\R^d)$, $k \in \N$,
    \begin{align*}
      \lim_{m \to \infty} W_k[\Psi]\left(D_{A_m}^2f\right)=\norm{f}_2^2.
    \end{align*}
    In particular, Corollary \ref{cor: arbitrarily slow decay if delta is positive} applies with $\delta=1$.
  \end{theorem}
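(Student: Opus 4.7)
The plan is to reduce the problem to properties of the nested family $\Psi_m = D_{A_m^{-1}}^1\Psi$ via the dilation-covariance lemma, then leverage monotone convergence together with the Riemann--Lebesgue lemma to identify the limit.

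\textbf{Step 1: Reduction via dilation.} By Lemma \ref{lemma: commuting U[p] with D_A}, for every $f \in L^2(\R^d)$, every $k \in \N$, and every $m \in \N$,
\[
  W_k[\Psi](D_{A_m}^2 f) = W_k[D_{A_m^{-1}}^1 \Psi](f) = W_k[\Psi_m](f).
\]
Since $\Psi_m \subseteq \Psi_{m+1} \subseteq \Psi_\infty$ by hypothesis (ii), and $W_k[\Phi](f) = \sum_{p \in \Phi^k}\|U[p]f\|_2^2$ is a sum of nonnegative terms that is monotone in $\Phi$, monotone convergence yields
\[
  \lim_{m \to \infty} W_k[\Psi_m](f) = W_k[\Psi_\infty](f).
\]

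\textbf{Step 2: $\Psi_\infty$ is a Parseval frame.} A direct computation shows that the Fourier transform of $D_{A_m^{-1}}^1 \eta$ equals $\widehat{\eta}(A_m^T \cdot)$ for any $\eta \in L^1(\R^d)$. Substituting $\xi \mapsto A_m^T\xi$ in the Littlewood-Paley identity \eqref{ass: Littlewood-Paley condition} for $\mathfrak{F}$ gives, for a.e. $\xi \in \R^d$,
\[
  |\widehat{\chi}(A_m^T\xi)|^2 + \sum_{\psi \in \Psi_m} |\widehat{\psi}(\xi)|^2 = 1.
\]
Since $\chi \in L^1(\R^d)$, $\widehat{\chi} \in C_0(\R^d)$ by the Riemann-Lebesgue lemma. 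Because $\|A_m^T\xi\|_2 \geq \sigma_{\min}(A_m)\|\xi\|_2 \to \infty$ for every $\xi \neq 0$, we obtain $|\widehat{\chi}(A_m^T\xi)|^2 \to 0$ a.e. Combined with the monotonicity of the partial sums over $\Psi_m \subseteq \Psi_{m+1}$, this shows
\[
  \sum_{\psi \in \Psi_\infty} |\widehat{\psi}(\xi)|^2 = 1 \quad \text{a.e. } \xi \in \R^d,
\]
so $\Psi_\infty$ is a semi-discrete Parseval frame (formally with output-generating filter $\chi = 0$).

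\textbf{Step 3: Norm-preservation at depth $k$.} I would now mimic Lemma \ref{lem: U is norm-preserving if we consider the whole Parseval frame}. Induction on $k$: the base case $k=1$ is exactly the Parseval identity from Step 2, namely $W_1[\Psi_\infty](f) = \sum_{\psi \in \Psi_\infty}\|f*\psi\|_2^2 = \|f\|_2^2$. For the induction step, exploit the recursive identity
\[
  W_{k+1}[\Psi_\infty](f) = \sum_{\psi \in \Psi_\infty} W_k[\Psi_\infty](|f*\psi|) = \sum_{\psi \in \Psi_\infty}\||f*\psi|\|_2^2 = \|f\|_2^2,
\]
using the induction hypothesis on $|f*\psi| \in L^2(\R^d)$ and the fact that the modulus preserves the $L^2$-norm. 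Combining this with Step~1 gives $\lim_{m\to\infty} W_k[\Psi](D_{A_m}^2 f) = \|f\|_2^2$.

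\textbf{Step 4: Identifying $\delta$.} For any $f \in L^2(\R^d)$ with $\|f\|_2 = 1$ and any $k \in \N$, Step~3 gives $\limsup_{m\to\infty} W_k[\Psi](D_{A_m}^2 f) = 1$. Therefore the quantity $\delta$ appearing in Corollary \ref{cor: arbitrarily slow decay if delta is positive} equals $1$, and the corollary applies directly. The only genuinely delicate point I foresee is justifying Step~2: one must carefully combine the vanishing-at-infinity property of $\widehat\chi$ with the monotonicity of the partial sums to pass from the pointwise identity for $\Psi_m$ (which still involves the low-pass error term) to the clean Parseval identity for $\Psi_\infty$. Everything else is straightforward application of the preceding lemmata.
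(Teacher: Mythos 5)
Your proposal is correct and follows essentially the same route as the paper's proof: reduction via Lemma \ref{lemma: commuting U[p] with D_A}, the dilated Littlewood--Paley identity combined with the Riemann--Lebesgue lemma to identify $\Psi_\infty$ as a Parseval frame, and norm-preservation of $U[\Psi_\infty^k]$ (your Step~3 simply re-derives Lemma \ref{lem: U is norm-preserving if we consider the whole Parseval frame}, which the paper cites). The only cosmetic differences are that you argue pointwise on the Littlewood--Paley identity where the paper passes through $L^2$-norms with dominated convergence, and you phrase the finite-subset exhaustion of $\Psi_\infty^k$ as monotone convergence; both are equivalent.
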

  
  \begin{proof}
    By the Littlewood-Paley condition \eqref{ass: Littlewood-Paley condition}, we have, for every $m \in \N$, 
    \begin{align*}
      |\hat{\chi}(A_m^{T}\xi)|^2+\sum_{\psi \in \Psi_m} |\hat{\psi}(\xi)|^2 = |\hat{\chi}(A_m^{T}\xi)|^2+\sum_{\psi \in \Psi} |\hat{\psi}(A_m^{T}\xi)|^2=1 \quad \text{a.e. } \xi \in \R^d.
    \end{align*}
    In particular, for all $f \in L^2(\R^d)$, $m \in \N$, 
    \[\sum_{\psi \in \Psi_m} \norm{f*\psi}_2^2  \leq \norm{f}_2^2,\]  
    which implies 
    \[\sum_{\psi \in \Psi_\infty} \norm{f*\psi}_2^2\leq \norm{f}_2^2.\] 
    On the other hand, as a consequence of $\lim_{m \to \infty}\sigma_{\text{min}}(A_m^{T})=\infty$, the Riemann-Lebesgue lemma, and Lebesgue's dominated convergence theorem, we find that
    \[\sum_{\psi \in \Psi_\infty} \norm{f*\psi}_2^2 \geq \sum_{\psi \in \Psi_m} \norm{f*\psi}_2^2 = \norm{f}_2^2 - \norm{f*D_{A_m^{-1}}^1\chi}_2^2 \xrightarrow{m \to \infty} \norm{f}_2^2.\] 
    Thus, $\Psi_\infty$ is a semi-discrete Parseval frame.
  
    Fix $f \in L^2(\R^d)$. Recall from Lemma \ref{lemma: commuting U[p] with D_A} that we have, for all $k \in \N$, $m \in \N$, 
    \begin{align*}
      W_k[\Psi](D_{A_m}^2f)=W_k[D_{A_m^{-1}}^1\Psi](f)=W_k[\Psi_m](f).
    \end{align*}
    Let $\varepsilon>0$. Since $W_k[\Psi_\infty](f)=\norm{f}_2^2$ by Lemma \ref{lem: U is norm-preserving if we consider the whole Parseval frame}, there is a finite subset $\tilde{\Psi}=\tilde{\Psi}(\varepsilon)\subset \Psi_\infty$ such that 
    \[W_k[\widetilde{\Psi}](f)\geq \norm{f}_2^2-\varepsilon.\] 
    Since $\widetilde{\Psi}$ is finite, there exists $M=M(\varepsilon) \in \N$ with $\widetilde{\Psi} \subset \Psi_{M}$. Thus, for all $m \geq M$,
    \[\norm{f}_2^2\geq W_k[\Psi_{m}](f)\geq W_k[\Psi_{M}](f) \geq W_k[\widetilde{\Psi}](f) \geq \norm{f}_2^2-\varepsilon,\]
    which concludes the second part of the theorem.
  \end{proof}
  
  In our main application - concerning the wavelet scattering transform; see Section \ref{sec: Scattering with wavelets} - of the previous theorem, the matrices $(A_m)_{m \in \N}$ are generated by a single expansive matrix immediately implying that condition (i) from Theorem \ref{thm: slow propagation if filters are dilation invariant} is fulfilled.
  We conclude this section by explicitly \mbox{(re-)stating} our main negative findings for this particular instance of our assumptions, which is much easier to verify than the assumptions of Theorem \ref{thm: Arbitrarily slow convergence of W_N}.
  
  \begin{corollary}\label{cor: slow propagation in case of a single expansive matrix}
    Let $A \in \GL_d(\R)$ be such that $\sigma_{\min}(A)>1$. If $\Psi_{m} \subseteq \Psi_{m+1}$ holds for all $m \in \N$, where $\Psi_m:=D_{A^{-m}}^1\Psi$, then $\Psi_\infty:=\bigcup_{m \in \N} \Psi_m$ is a semi-discrete Parseval frame. Moreover, for all $f \in L^2(\R^d)$, $k \in \N$,
    \begin{align*}
      \lim_{m \to \infty} W_k[\Psi]\left(D_{A^m}^2f\right)=\norm{f}_2^2.
    \end{align*}
    In particular, Corollary \ref{cor: arbitrarily slow decay if delta is positive} applies with $\delta=1$.
  \end{corollary}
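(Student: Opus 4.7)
The plan is to specialize Theorem \ref{thm: slow propagation if filters are dilation invariant} to the matrix sequence $A_m := A^m$, $m \in \N$. All three assertions of the corollary then follow by direct quotation of that theorem and the subsequent Corollary \ref{cor: arbitrarily slow decay if delta is positive}.

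First I would verify the two hypotheses of Theorem \ref{thm: slow propagation if filters are dilation invariant}. Condition (ii) is immediate: the definition $\Psi_m = D_{A^{-m}}^1 \Psi = D_{(A^m)^{-1}}^1 \Psi$ matches the setting of the theorem verbatim, and the nested inclusion is exactly what is assumed here. For condition (i), I would invoke the standard singular-value inequality $\sigma_{\min}(XY) \geq \sigma_{\min}(X)\, \sigma_{\min}(Y)$ for invertible $d \times d$ real matrices, which follows from $\sigma_{\min}(Z) = \sigma_{\max}(Z^{-1})^{-1}$ combined with submultiplicativity of the spectral norm applied to $(XY)^{-1} = Y^{-1}X^{-1}$. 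Iterating this bound gives $\sigma_{\min}(A^m) \geq \sigma_{\min}(A)^m$; since $\sigma_{\min}(A) > 1$ by hypothesis, the right-hand side diverges to $\infty$, so condition (i) holds.

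Having checked both hypotheses, Theorem \ref{thm: slow propagation if filters are dilation invariant} immediately yields that $\Psi_\infty$ is a semi-discrete Parseval frame and the limit identity $\lim_{m \to \infty} W_k[\Psi](D_{A^m}^2 f) = \norm{f}_2^2$ for all $f \in L^2(\R^d)$ and $k \in \N$. For the final assertion, I would unpack the definition of $\delta$ in Corollary \ref{cor: arbitrarily slow decay if delta is positive}: we have
\[
\delta \;=\; \inf_{\substack{f \in L^2(\R^d),\\ \norm{f}_2 = 1}} \inf_{k \in \N} \limsup_{m \to \infty} W_k[\Psi]\bigl(D_{A^m}^2 f\bigr).
\]
By the limit identity just established, for every such $f$ and every $k \in \N$ the limsup equals the actual limit, which is $\norm{f}_2^2 = 1$. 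Hence $\delta = 1$, and the full conclusion of Corollary \ref{cor: arbitrarily slow decay if delta is positive} applies with that value.

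The main obstacle is essentially nonexistent, since all substantive content was developed in the preceding lemmas and theorems; the present corollary is a direct specialization. The only minor linear-algebra fact needed, $\sigma_{\min}(A^m) \geq \sigma_{\min}(A)^m$, is standard, and everything else is bookkeeping around the definitions of $\Psi_m$ and of $\delta$.
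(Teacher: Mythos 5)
Your proposal is correct and follows exactly the route the paper intends: the corollary is presented as an immediate specialization of Theorem \ref{thm: slow propagation if filters are dilation invariant} to $A_m := A^m$, with condition (i) verified via $\sigma_{\min}(A^m)\geq \sigma_{\min}(A)^m\to\infty$ (the same submultiplicativity bound the paper uses in the proof of Lemma \ref{lem: Matrix dilations satisfy assumptions for approx add}) and condition (ii) holding by hypothesis. Your unpacking of why $\delta=1$ in Corollary \ref{cor: arbitrarily slow decay if delta is positive} is likewise exactly what the paper's "In particular" clause asserts.
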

  
  %============================================================
  \section{Convergence Rates for Scattering Propagation}\label{sec: Convergence rates for scattering propagation}
  %============================================================
  
  In this section, we show that under mild analyticity assumptions on the scattering filters $\Psi$, fast energy propagation can still be achieved for large signal classes in $L^2(\R^d)$. These signal classes depend on the required speed of energy propagation, i.e., the faster the energy propagation, the smaller the signal class for which we can \textit{guarantee} this speed. This is reflected by an interplay between the decay behavior of the Fourier transform of the input signal relative to the filters and the frequency localization of the filters regarding the size and shape of their Fourier supports.
    
  Notably, our results in this section are general, since
  \begin{itemize}
    \item they hold in any dimension $d \in \N$;
    \item our assumptions can be easily satisfied; and
    \item the setup provides several parameters that can be used for fine-tuning.
  \end{itemize}
  In particular, Section \ref{sec: Scattering with wavelets} demonstrates that these results apply to filter banks generated by bandlimited wavelets, thereby complementing the negative results from the previous section about arbitrarily slow energy propagation in such scattering networks.
   
  Our analysis begins with a similar approach to that used in \cite[Theorem 3.1]{waldspurger2017exponential} and \cite[Theorem 1, Theorem 2]{wiatowski2017energy}, establishing an upper bound on $W_N[\Psi](f)$, for all $f \in L^2(\R^d)$ and $N \in \N_{\geq 2}$, which is of the type
  \begin{align}\label{eq: bound on W_N(f) by integration against K_N}
    W_N[\Psi](f) \leq \int_{\R^d} |\widehat{f}(\xi)|^2 \cdot K_N(\xi) \dxi
  \end{align}
  for a family $(K_N)_{N \in \N_{\geq 2}}$ of integral kernels (independent of $f$) that satisfy additional properties. In parts, we combine the techniques from \cite{waldspurger2017exponential,wiatowski2017energy} in the proof of Theorem \ref{thm: W_N(f) is upper-bounded by integral}, which allows for more flexibility in our assumptions on the filters. On the basis of this approach, we then leverage the inductive structure of the scattering network to involve the filters $\Psi$ in the integral in \eqref{eq: bound on W_N(f) by integration against K_N}.
  
  To clarify our assumptions for this section, we introduce some additional notation. Let $\nu^* \in \mathbb{S}^{d-1}$ be the vector with entries $\nu_k^*:=\nicefrac{1}{\sqrt{d}}$ for each $k \in \{1,\ldots,d\}$. For $\rho \in [0,1)$, we  denote by 
  \begin{align}\label{def: cone}
    C^{\rho}:=\left\{x \in \R_{\geq 0}^d ~\middle|~ \ip{x,\nu^*} \geq (1-\rho) \cdot \norm{x}_2 \right\}
  \end{align}
  the closed cone in the first canonical orthant with its tip at the origin and opening angle relative to $\nu^*$ parameterized by $\rho$. Note that $C^{\rho}$ is precisely the first canonical orthant $\R_{\geq 0}^d$ if $\rho \geq 1-\frac{1}{\sqrt{d}}$. In particular, in dimension $d=1$, we have, for all $\rho \in [0,1)$, $C^\rho=[0,\infty)$. The following assumptions are motivated by \cite[Assumption 1]{wiatowski2017energy}.
  
  \begin{assumption}\label{ass: high-pass nature and analyticity of filters}
  Let $\kappa \in \N_{\geq 2}$. Let $(r_j)_{j \in \N} \in \R_{>0}^{\N}$ be a strictly increasing sequence of scales and suppose that $\gamma:=\gamma(\kappa):=\inf_{j \in \N} \frac{r_{j}}{r_{j+\kappa}}>0$. In addition to the Littlewood-Paley condition \eqref{ass: Littlewood-Paley condition}, we assume that there exists a partition $\Psi = \bigcup_{j \in \N} \Psi_j$ such that for every $\psi \in \Psi_j$, $j \in \N$, there is an orthogonal matrix $A_{\psi} \in O_d(\R)$ such that
  \begin{align*}
  \supp(\widehat{\psi})\subseteq C_{A_{\psi}}^\rho \cap \s_{r_{j},r_{j+\kappa}},
  \end{align*}
  where $C_{A_{\psi}}^\rho:=\{A_{\psi} x ~|~ x \in C^\rho\}$.
  \end{assumption}

  \begin{figure}[h]
    \centering
    \begin{minipage}[h]{0.45\textwidth}
        \centering
        \includegraphics[width=0.95\textwidth]{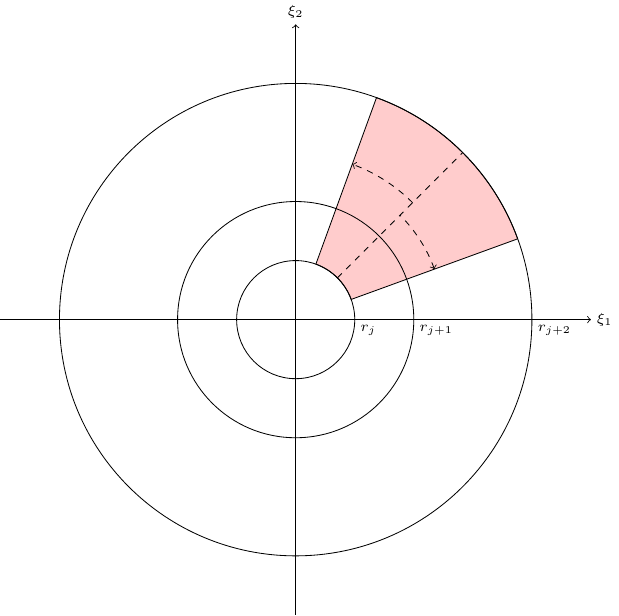}
        \label{fig:sub1}
    \end{minipage}
    \hfill
    \begin{minipage}[h]{0.45\textwidth}
        \centering
        \includegraphics[width=0.95\textwidth]{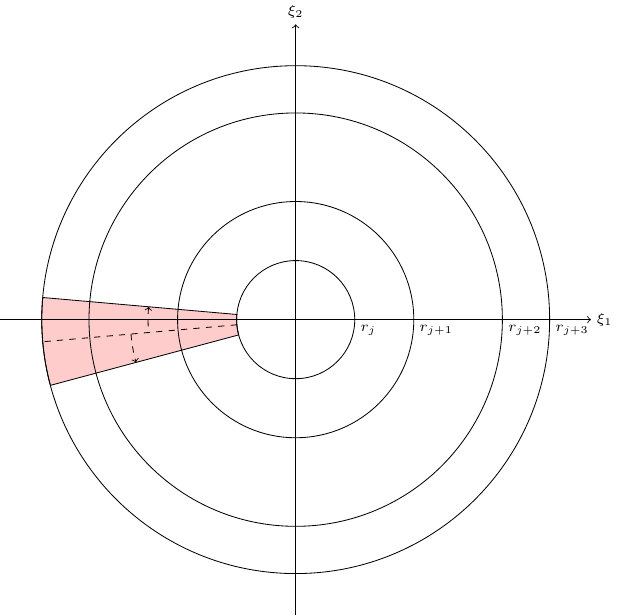}
        \label{fig:sub2}
    \end{minipage}
    \caption{Illustration of the allowed regions for the frequency supports of two different filters from (possibly) two different families of high-pass filters. 
    The parameter $\kappa$ (left $\kappa=2$, right $\kappa=3$) reflects the number of scales the filter may interfere with, and $\rho$ parameterizes the opening angle of the colored segment.}
    \label{fig:both}
  \end{figure}
  
  The present setting may be regarded as a refinement of \cite[Assumption 1]{wiatowski2017energy}, in that it incorporates additional information on the frequency localization of the filters, which ultimately affects the decay behavior of the energy remainder. We think of this as follows: By the Littlewood-Paley condition \eqref{ass: Littlewood-Paley condition}, the filters $\Psi$ induce a decomposition of the frequency space $\R^d$ according to 
  \begin{align}\label{eq:_decomposition_of_frequency_space}
  	\R^d=B_{r_1}(0) \cup \bigcup_{j\in \N} \bigcup_{\psi \in \Psi_j} (C_{A_\psi}^\rho \cap \s_{r_{j},r_{j+\kappa}}).
  \end{align}
  The filters $\Psi$ are of high-pass nature in the sense that they leave a frequency gap around zero, 
  \begin{align}\label{frequency gap}
    \sum_{\psi \in \Psi} |\widehat{\psi}(\xi)|^2=0 \quad \text{ for all } \xi \in B_{r_1}(0).
  \end{align}
  By the Littlewood-Paley condition \eqref{ass: Littlewood-Paley condition}, this gap is filled by $\widehat{\chi}$. Moreover, any high-pass filter is frequency-localized in a region that is the intersection of a spherical shell, and a (rotated) cone with tip at the origin and angular spread parameterized by $\rho$. The maximum number of scale-interferences is specified by $\kappa$. The parameter $\gamma$ reflects the (inverse of the) maximal relative spread between the inner and outer radius of any such shell. Clearly, we have $\gamma \in (0,1)$, since the scales are strictly increasing. The closer $\gamma$ is to $1$ and the closer $\rho$ is to $0$, the more localized the high-pass filters are in frequency space (Fig.~\ref{fig:both}).
  
  These additional parameters are reflected in the decay rates that we can guarantee for the energy remainder of certain signals, offering new insight into the interplay between the localization of the filters (in terms of size and shape of their Fourier supports) and the frequency decay behavior of the signals. We note however that our assumptions implicitly require the filters to be bandlimited, which is slightly more restrictive than the setting considered in \cite[Assumption 1]{wiatowski2017energy}.

  We begin our analysis with two auxiliary results about the geometry of the allowed domain for the Fourier supports of the filters. The first one provides an upper bound on the maximum distance from $\nu^*$ to any other point in the set $C^\rho \cap \mathbb{S}^{d-1}$. 
  \begin{lemma}\label{lemma: maximum distance to v*}
    If $t>0$, then $\max_{x \in C^\rho \cap \mathbb{S}^{d-1}} \norm{x-t\nu^*}_2^2\leq t^2-2t \cdot (1-\rho)+1.$
  \end{lemma}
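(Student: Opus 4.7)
The plan is to reduce the bound to a one-line inner product estimate, using only the defining inequality of the cone $H^\rho$ together with the fact that both $x$ and $\nu^*$ are unit vectors.

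First I would fix an arbitrary $x \in H^\rho \cap \mathbb{S}^{d-1}$ and expand the squared norm via bilinearity of the Euclidean inner product:
\[
\norm{x - t\nu^*}_2^2 \;=\; \norm{x}_2^2 \;-\; 2t \ip{x,\nu^*} \;+\; t^2 \norm{\nu^*}_2^2 \;=\; 1 - 2t\ip{x,\nu^*} + t^2,
\]
where I use $\norm{x}_2 = \norm{\nu^*}_2 = 1$.

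Next I would invoke the defining property of $H^\rho$, namely $\ip{x,\nu^*} \geq (1-\rho)\norm{x}_2 = 1-\rho$. Since $t>0$, multiplying by $-2t$ flips the inequality to give $-2t\ip{x,\nu^*} \leq -2t(1-\rho)$, and plugging into the previous display yields $\norm{x-t\nu^*}_2^2 \leq t^2 - 2t(1-\rho) + 1$, which is the claimed bound. Taking the maximum over $x \in H^\rho \cap \mathbb{S}^{d-1}$ preserves the inequality.

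There is no real obstacle here: once definitions are unpacked, the proof is a direct computation that never uses more than the Cauchy–Schwarz-style bound on $\ip{x,\nu^*}$. The positivity of $t$ is the only hypothesis that matters beyond the cone condition, and it is what guarantees the correct direction of the inequality.
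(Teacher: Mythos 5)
Your proof is correct and is essentially identical to the paper's: expand $\norm{x-t\nu^*}_2^2$ by bilinearity, use $\norm{x}_2=\norm{\nu^*}_2=1$, and apply the defining cone inequality $\ip{x,\nu^*}\geq(1-\rho)\norm{x}_2$ together with $t>0$ to flip the sign. No differences worth noting.
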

  
  \begin{proof}
    A direct computation shows that, for all $x \in C^\rho \cap \mathbb{S}^{d-1}$,
    \begin{align*}
      \norm{x-t\nu^*}_2^2 = \norm{x}_2^2 - 2t \cdot \ip{x,\nu^*} + t^2 \cdot \norm{\nu^*}_2^2 \leq t^2-2t \cdot (1-\rho) \cdot \norm{x}_2+1 = t^2-2t \cdot (1-\rho)+1.
    \end{align*}
  \end{proof}
  
  The next lemma is crucial for the induction step in the proof of Theorem \ref{thm: W_N(f) is upper-bounded by integral}. 
  
  \begin{lemma}\label{lem: how alpha comes into play}
    There is a family $(\nu_\psi)_{\psi \in \Psi}\subseteq \R^d$ so that, for all $\psi \in \Psi$, and all $\xi \in \supp(\widehat{\psi})$,
    \begin{align*}
      \norm{\xi-\nu_\psi}_2\leq \alpha \cdot \norm{\xi}_2,
    \end{align*}
    where $\alpha=\alpha(\gamma,\rho):=\sqrt{1-\frac{4\gamma}{(1+\gamma)^2}\cdot (1-\rho)^2}\in (0,1)$.
  \end{lemma}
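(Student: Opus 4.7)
The plan is to exhibit an explicit choice of $\nu_\psi$ of the form $\nu_\psi := t_\psi A_\psi \nu^*$ for a suitable scalar $t_\psi > 0$ depending on the scale index $j$ of the layer $\psi \in \Psi_j$ to which the filter belongs, and then verify the inequality via Lemma~\ref{lemma: maximum distance to v*}. Since $A_\psi \in O_d(\R)$ preserves norms, writing $\xi = A_\psi y$ with $y \in H^\rho \cap \s_{r_j, r_{j+\kappa}}$ reduces everything to the claim
\[
\|y - t_\psi \nu^*\|_2 \leq \alpha \|y\|_2 \quad \text{for all } y \in H^\rho \text{ with } \|y\|_2 \in [r_j, r_{j+\kappa}].
\]

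First I would write $y = \|y\|_2 \hat{y}$ with $\hat{y} \in H^\rho \cap \mathbb{S}^{d-1}$, pull out $\|y\|_2$, and apply Lemma~\ref{lemma: maximum distance to v*} with parameter $t = t_\psi/\|y\|_2$ to obtain
\[
\|y - t_\psi \nu^*\|_2^2 \leq t_\psi^2 - 2 t_\psi (1-\rho) \|y\|_2 + \|y\|_2^2.
\]
Dividing by $\|y\|_2^2$, the desired inequality becomes the quadratic condition $u^2 - 2(1-\rho) u + (1-\alpha^2) \leq 0$ in the variable $u := t_\psi / \|y\|_2$. This is equivalent to $u \in [u_-, u_+]$, where $u_\pm := (1-\rho) \pm \sqrt{(1-\rho)^2 - (1-\alpha^2)}$ (the discriminant is nonnegative by AM-GM on $\gamma$, as I will check).

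As $\|y\|_2$ sweeps $[r_j, r_{j+\kappa}]$, the quantity $t_\psi/\|y\|_2$ sweeps $[t_\psi/r_{j+\kappa}, t_\psi/r_j]$, so I need $t_\psi$ in the interval $[u_- r_{j+\kappa}, u_+ r_j]$, which is nonempty precisely when $u_-/u_+ \leq r_j/r_{j+\kappa}$. Using $r_j/r_{j+\kappa} \geq \gamma$, it suffices to verify $u_-/u_+ \leq \gamma$. A short rationalization (multiply by $u_+$, square the remaining radical) turns this into
\[
1 - \alpha^2 \leq (1-\rho)^2 \cdot \frac{(1+\gamma)^2 - (1-\gamma)^2}{(1+\gamma)^2} = (1-\rho)^2 \cdot \frac{4\gamma}{(1+\gamma)^2},
\]
which is exactly the definition of $\alpha$. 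Picking any $t_\psi$ in the (nonempty) interval above, e.g.\ $t_\psi := u_+ r_j$, finishes the construction.

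The routine part is the quadratic manipulation; the only real subtlety — essentially pre-baked into the definition of $\alpha$ — is recognising that the feasibility condition for $t_\psi$ across the entire shell $\s_{r_j, r_{j+\kappa}}$ forces the tightest admissible $\alpha$ to be exactly the quantity stated, with AM-GM ($4\gamma \leq (1+\gamma)^2$) guaranteeing both the nonnegativity of the discriminant and $\alpha^2 \in [0,1)$, while $\gamma > 0$ and $\rho < 1$ yield $\alpha > 0$.
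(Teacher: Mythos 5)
Your proposal is correct and follows essentially the same route as the paper: the same reduction via orthogonality of $A_\psi$, the same appeal to Lemma~\ref{lemma: maximum distance to v*}, and the same quadratic in $t_\psi/\norm{y}_2$, with your endpoint choice $t_\psi=u_+r_j=\tfrac{2r_j(1-\rho)}{1+\gamma}$ coinciding exactly with the paper's $\nu_\psi$. The only difference is organizational — you solve for the admissible range of $t_\psi$ and check it is nonempty, whereas the paper fixes this $t_\psi$ up front and maximizes the convex quadratic over the shell's radii — and your closing attributions of which hypothesis yields $\alpha>0$ versus $\alpha<1$ are swapped, though both checks are trivial.
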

  
  \begin{proof}
    Since the function $(0,1)\to \R, \ \gamma \mapsto \frac{4\gamma}{(1+\gamma)^2}$ is strictly increasing, we have
    \[0<\frac{4\gamma}{(1+\gamma)^2}\cdot (1-\rho)^2\leq\frac{4\gamma}{(1+\gamma)^2}<\lim_{\gamma \uparrow 1} \frac{4\gamma}{(1+\gamma)^2}= 1.\] 
    Thus, $\alpha \in (0,1)$.
    Now let, for every $\psi \in \Psi_j$, $j \in \N$,
    \[\nu_{\psi}:=\frac{2r_{j}}{1+\gamma} \cdot (1-\rho)\cdot A_{\psi}\nu^* \in \R^d.\]
    By Assumption \ref{ass: high-pass nature and analyticity of filters}, any $\xi \in \supp(\widehat{\psi})$ can be written as $A_{\psi}\omega = \xi$ for some $\omega \in C^\rho \cap \s_{r_{j},r_{j+\kappa}}$. Since $A_{\psi}$ is an orthogonal matrix, it preserves the euclidean distances $\norm{\xi}_2=\norm{\omega}_2$ and
    \[\|\xi-\nu_{\psi}\|_2=\norm{\omega-\frac{2r_{j} \cdot (1-\rho)}{1+\gamma}\cdot \nu^*}_2.\] 
    It therefore suffices to show that the latter term is bounded by $\alpha \cdot \norm{\omega}_2$. By Lemma \ref{lemma: maximum distance to v*},
    \begin{align}
      \frac{\norm{\omega-\frac{2r_{j} \cdot (1-\rho)}{(1+\gamma)}\cdot \nu^*}_2}{\norm{\omega}_2} 
      &\leq \max_{s \in [r_{j}, r_{j+\kappa}]}\norm{\frac{\omega}{\norm{\omega}_2}-\frac{2r_{j} \cdot (1-\rho)}{s \cdot (1+\gamma)}\cdot \nu^*}_2 \nonumber\\
      &\leq  \max_{s \in [r_{j},r_{j+\kappa}]} \max_{x \in C^\rho \cap \mathbb{S}^{d-1}} \Bigg\Vert x-\underbrace{\frac{2r_{j} \cdot (1-\rho)}{s \cdot (1+\gamma)}}_{=:t(s)}\cdot \nu^*\Bigg\Vert_2 \nonumber\\
      &\leq \max_{s \in [r_{j},r_{j+\kappa}]} \sqrt{t(s)^2-2t(s) \cdot (1-\rho)+1} \nonumber\\
      &\leq \max_{s \in [r_{j},\gamma^{-1}r_{j}]} \sqrt{t(s)^2-2t(s) \cdot (1-\rho) +1} \nonumber\\
      &=\max_{t \in \left[\frac{2\gamma \cdot (1-\rho)}{1+\gamma},\frac{2(1-\rho)}{1+\gamma}\right]} \sqrt{t^2-2t \cdot (1-\rho)+1}. \label{maximum of polynomial in t}
    \end{align}
    Since the polynomial \[P:\R \to \R, \quad t \mapsto t^2-2t \cdot (1-\rho)+1\] is convex, the maximum in \eqref{maximum of polynomial in t} is attained at one of the boundary points of the interval. 
    A direct computation yields
    \begin{align*}
      \max_{r \in \left[\frac{2\gamma \cdot (1-\rho)}{1+\gamma},\frac{2(1-\rho)}{1+\gamma}\right]} \sqrt{r^2-2r \cdot (1-\rho)+1} 
      =\sqrt{1-\frac{4\gamma}{(1+\gamma)^2}\cdot (1-\rho)^2}.
    \end{align*}
  \end{proof}
  
  The following lemma is due to Mallat (cf. \cite[Lemma 2.7]{mallat2012group}) and turns out to be essential for proving the main result of this section. For the sake of completeness, we reproduce its (elementary) proof.
  
  \begin{lemma}\label{lem: Extension of Mallat's lemma}
    Let $f\in L^2(\R^d)$, $g \in L^1(\R^d)$, and $\nu \in \R^d$. If $g \geq 0$, then 
    \[\int_{\R^d} \abs{\mathcal{F}(|f|)(\xi)}^2 \cdot \left(1-\abs{\widehat{g}(\xi)}^2\right) \dxi 
    \leq
    \int_{\R^d} \abs{\widehat{f}(\xi)}^2 \cdot \left(1-\abs{\widehat{g}(\xi-\nu)}^2\right) \dxi.\]
  \end{lemma}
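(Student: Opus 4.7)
The plan is to reduce the claimed integral inequality to an easy pointwise estimate via Plancherel and the convolution theorem, exploiting $g \geq 0$ in exactly one place.

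First I would observe that $\|f\|_2^2 = \| |f| \|_2^2$, so after subtracting the constant part from both sides of the asserted inequality it is equivalent to showing
\[
\int_{\R^d} \abs{\widehat{f}(\xi)}^2 \abs{\widehat{g}(\xi-\nu)}^2 \dxi \;\leq\; \int_{\R^d} \abs{\mathcal{F}(|f|)(\xi)}^2 \abs{\widehat{g}(\xi)}^2 \dxi.
\]
Next, introduce the modulated filter $g_\nu(x) := e^{2\pi i \ip{x,\nu}} g(x)$. A direct computation with the chosen Fourier normalization gives $\widehat{g_\nu}(\xi) = \widehat{g}(\xi - \nu)$. The convolution theorem and Plancherel's identity then identify the two sides above with $\|f * g_\nu\|_2^2$ on the left and $\||f| * g\|_2^2$ on the right, so the entire inequality reduces to
\[
\norm{f * g_\nu}_2 \;\leq\; \norm{|f| * g}_2.
\]

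Finally, I would establish this last bound pointwise. For almost every $x \in \R^d$, the triangle inequality for integrals yields
\[
\abs{(f * g_\nu)(x)} = \abs{\int_{\R^d} f(y)\, e^{2\pi i \ip{x-y,\nu}}\, g(x-y) \dy} \;\leq\; \int_{\R^d} |f(y)|\, g(x-y) \dy = (|f| * g)(x),
\]
where the inequality uses precisely the two hypotheses on $g$: the unimodular factor $e^{2\pi i \ip{x-y,\nu}}$ has absolute value one, and $g \geq 0$ means $|g(x-y)| = g(x-y)$ so no sign is absorbed into an absolute value. Taking $L^2$-norms in $x$ closes the argument.

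I expect no real obstacle here; the only thing to be careful about is the Fourier-transform convention (the shift-modulation duality $\widehat{g_\nu}(\xi) = \widehat{g}(\xi - \nu)$ must match the normalization fixed in the Notation section), and the fact that the positivity assumption on $g$ is used exclusively to pass from $|g_\nu| = g$ in the pointwise step.
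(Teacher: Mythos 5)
Your proposal is correct and follows essentially the same route as the paper: the paper likewise rewrites both sides via Parseval and the convolution theorem as $\norm{f}_2^2-\norm{f*M_\nu g}_2^2$ and $\norm{|f|}_2^2-\norm{|f|*g}_2^2$, and then uses the pointwise bound $|f*M_\nu g|\leq |f|*|M_\nu g|=|f|*g$, which is exactly your modulation argument with $g_\nu=M_\nu g$. No issues.
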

  
  \begin{proof}
    Clearly, since $g\geq 0$, we have 
    \begin{align*}
      \norm{|f|*g}_2^2 
      = \norm{|f|*|M_\nu g|}_2^2 
      \geq \norm{f*M_\nu g}_2^2.
    \end{align*}
    By the convolution theorem and Parseval's theorem, we obtain
    \begin{align*}
      \int_{\R^d} \abs{\mathcal{F}(|f|)(\xi)}^2 \cdot \left(1-\abs{\widehat{g}(\xi)}^2\right) \dxi &= \norm{|f|}_2^2-\norm{|f|*g}_2^2 \\
      &\leq \norm{f}_2^2 - \norm{f*M_\nu g}_2^2 =
      \int_{\R^d} \abs{\widehat{f}(\xi)}^2 \cdot \left(1-\abs{\widehat{g}(\xi-\nu)}^2\right) \dxi.
    \end{align*}
  \end{proof}
  
  We are now ready to establish an upper bound on the energy remainder that is of the type \eqref{eq: bound on W_N(f) by integration against K_N}. Previous work in this direction \cite{waldspurger2017exponential,wiatowski2017energy,wiatowski2017topology} showed that, in dimension $d=1$, for certain wavelet-generated filter banks $\Psi_{\text{wav}}$, there exist constants $C=C(\Psi_{\text{wav}})>0$ and $\alpha=\alpha(\Psi_{\text{wav}}) \in (0,1)$ such that, for all (real-valued, cf. \cite{waldspurger2017exponential}) signals $f \in L^2(\R)$, and for all $N \in \N_{\geq 2}$,
  \[W_N[\Psi_{\text{wav}}](f) \leq \int_{\R} |\widehat{f}(\xi)|^2 \cdot \left(1-\left|\widehat{\vartheta}\left(C \cdot  \alpha^{N-1} \cdot \xi\right)\right|^2\right) \dxi.\]
  Here, $\widehat{\vartheta}$ denotes a concretely specified positive definite and even function that ensures asymptotic decay of the right-hand side (a Gaussian in \cite{waldspurger2017exponential}, and a truncated power function in \cite{wiatowski2017energy}). Our result is an extension of this previous work, since it applies to the fairly large class of filter banks satisfying Assumption \ref{ass: high-pass nature and analyticity of filters}, including filter banks that are not necessarily structured as in the case of wavelet-generated filter banks. 
  At the same time, our result holds in arbitrary dimension $d \in \N$. Moreover, for filters that satisfy our Assumption \ref{ass: high-pass nature and analyticity of filters}, which is slightly more restrictive than \cite[Assumption 1]{wiatowski2017energy}, we improve the bound given in \cite[Theorem 1]{wiatowski2017energy} from polynomial decay of order $m_d$ (where $m_d \in (0,1]$ and $m_d \to 0$ as $d\to \infty$) in the argument of $\widehat{\vartheta}$ to exponential decay. 
  
  \begin{theorem}\label{thm: W_N(f) is upper-bounded by integral}
    Let $\vartheta \in L^1(\R^d)$, $\vartheta \geq 0$, and suppose that there exists a nonincreasing function $\eta:[0,\infty)\to [0,1]$ so that $\eta(0)=1$ and $|\widehat{\vartheta}|=\eta(\norm{\cdot}_2)$.
    Then, there exists a universal constant $C_{\chi,\vartheta}>0$ such that, for all $f \in L^2(\R^d)$, and all $N \in \N_{\geq 2}$,
    \begin{align}\label{Upper bound for W_N(f)}
      W_N(f) \leq \int_{\R^d} |\widehat{f}(\xi)|^2 \cdot \left(1-\left|\widehat{\vartheta}\left(C_{\chi,\vartheta} \cdot  \alpha^{N-1} \cdot \xi\right)\right|^2\right) \dxi,
    \end{align}
    where $\alpha=\alpha(\gamma,\rho):=\sqrt{1-\frac{4\gamma}{(1+\gamma)^2}\cdot (1-\rho)^2}$.
    
    Moreover, if there is $\widetilde{C}>0$ so that $|\widehat{\vartheta}(\widetilde{C}\,\cdot)| \leq |\widehat{\chi}|$, then \eqref{Upper bound for W_N(f)} holds for all $N \in \N$ with $C_{\chi,\vartheta}=\widetilde{C}$.
  \end{theorem}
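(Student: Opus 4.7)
The natural approach is induction on $N$, using Mallat's inequality from Lemma \ref{lem: Extension of Mallat's lemma}, the translation-cone estimate of Lemma \ref{lem: how alpha comes into play}, and the radial monotonicity of $\eta$. Throughout, write $\vartheta_c$ for the $L^1$-normalized dilate of $\vartheta$ with $\widehat{\vartheta_c}(\xi)=\widehat\vartheta(c\,\xi)$, which is again nonnegative so that Mallat's lemma applies. The induction step is common to both parts of the theorem; only the base case differs.

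For the induction step, assume the desired bound holds at level $N$ with scale $c:=C_{\chi,\vartheta}\alpha^{N-1}$ and for every $L^2$-function in place of $f$. Using $W_{N+1}(f)=\sum_{\psi\in\Psi}W_N(|f*\psi|)$ and inserting the hypothesis term by term, we obtain
\[
W_{N+1}(f)\leq\sum_{\psi\in\Psi}\int_{\R^d}|\mathcal{F}(|f*\psi|)(\xi)|^2\bigl(1-|\widehat{\vartheta_c}(\xi)|^2\bigr)\dxi.
\]
Applying Lemma \ref{lem: Extension of Mallat's lemma} to each summand with $g=\vartheta_c$ and $\nu=\nu_\psi$ (from Lemma \ref{lem: how alpha comes into play}) bounds this further by
\[
\sum_{\psi\in\Psi}\int_{\R^d}|\widehat f(\xi)|^2\,|\widehat\psi(\xi)|^2\bigl(1-|\widehat\vartheta(c(\xi-\nu_\psi))|^2\bigr)\dxi.
\]
Only $\xi\in\supp(\widehat\psi)$ contribute because of the factor $|\widehat\psi|^2$; on that set $\|\xi-\nu_\psi\|_2\leq\alpha\|\xi\|_2$, and the monotonicity of $\eta$ yields $|\widehat\vartheta(c(\xi-\nu_\psi))|=\eta(c\|\xi-\nu_\psi\|_2)\geq\eta(c\alpha\|\xi\|_2)=|\widehat\vartheta(c\alpha\xi)|$. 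Hence the integrand is pointwise dominated by $|\widehat f|^2|\widehat\psi|^2(1-|\widehat\vartheta(c\alpha\xi)|^2)$, and summing over $\psi$ together with the Littlewood-Paley bound $\sum_\psi|\widehat\psi|^2\leq 1$ closes the induction with the new scale $c\alpha=C_{\chi,\vartheta}\alpha^N$.

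The base case for the \emph{moreover} version is the identity $W_1(f)=\int|\widehat f|^2(1-|\widehat\chi|^2)\dxi$, which follows from Parseval's theorem and the Littlewood-Paley condition; under the hypothesis $|\widehat\vartheta(\tilde C\cdot)|\leq|\widehat\chi|$ this immediately gives the claim at $N=1$ with $C_{\chi,\vartheta}=\tilde C$, and the induction step above propagates it to all $N\in\N$.

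The main obstacle is the base case for the \emph{main} statement at $N=2$, since without a pointwise domination of $\widehat\chi$ by a dilate of $\widehat\vartheta$ one cannot simply replace $1-|\widehat\chi|^2$ by $1-|\widehat{\vartheta_c}|^2$. The intended route is to perform one extra reduction with $\chi$ itself playing the role of $g$ in Mallat's lemma: starting from $W_2(f)=\sum_\psi\int|\mathcal{F}(|f*\psi|)|^2(1-|\widehat\chi|^2)\dxi$ and applying Lemma \ref{lem: Extension of Mallat's lemma} with $g=\chi$ and $\nu=\nu_\psi$ yields
\[
W_2(f)\leq\sum_{\psi\in\Psi}\int_{\R^d}|\widehat f(\xi)|^2\,|\widehat\psi(\xi)|^2\bigl(1-|\widehat\chi(\xi-\nu_\psi)|^2\bigr)\dxi.
\]
The key structural input is that Assumption \ref{ass: high-pass nature and analyticity of filters} forces the supports of all high-pass filters to miss $B_{r_1}(0)$, so the Littlewood-Paley identity implies $|\widehat\chi|=1$ on $B_{r_1}(0)$. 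Therefore $1-|\widehat\chi(\xi-\nu_\psi)|^2$ vanishes on $\{\|\xi-\nu_\psi\|_2<r_1\}$ and is at most $1$ elsewhere. Combined with $\|\xi-\nu_\psi\|_2\leq\alpha\|\xi\|_2$ on $\supp(\widehat\psi)$, the effective integration region is confined to $\{\|\xi\|_2\geq r_1/\alpha\}$, and one then chooses $C_{\chi,\vartheta}$ large enough (using $\eta(t)\to 0$ as $t\to\infty$ from the Riemann-Lebesgue lemma) so that $1-|\widehat\vartheta(C_{\chi,\vartheta}\alpha\xi)|^2$ dominates this localized bound. Once the base case is secured in this form, the induction step of the second paragraph yields the estimate for all $N\geq 2$.
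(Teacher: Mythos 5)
Your induction step and your base case for the \emph{moreover} part are correct and coincide with the paper's argument. The genuine gap is in the base case $N=2$ of the main statement, and it is twofold. First, you invoke Lemma \ref{lem: Extension of Mallat's lemma} with $g=\chi$, but that lemma requires $g\geq 0$ in the spatial domain, and nothing in the standing assumptions makes $\chi$ nonnegative (only $\chi\in L^1(\R^d)\cap L^2(\R^d)$ and the Littlewood--Paley condition \eqref{ass: Littlewood-Paley condition} are assumed). Second, and more fundamentally, the final domination cannot be achieved with a single shift $\nu_\psi$: you need $1-|\widehat{\chi}(\xi-\nu_\psi)|^2\leq 1-|\widehat{\vartheta}(C_{\chi,\vartheta}\,\alpha\,\xi)|^2$, i.e.\ $\eta(C_{\chi,\vartheta}\,\alpha\,\norm{\xi}_2)\leq|\widehat{\chi}(\xi-\nu_\psi)|$, pointwise on $\supp(\widehat{\psi})$. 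But $\widehat{\chi}(\xi-\nu_\psi)$ may vanish on a large part of $\supp(\widehat{\psi})$ --- for instance when $\chi$ is bandlimited and $\psi$ lives at a high scale, the shell segment $\supp(\widehat{\psi})$ is far larger than the support of $\widehat{\chi}(\cdot-\nu_\psi)$ --- while $\eta(C_{\chi,\vartheta}\,\alpha\,\norm{\xi}_2)>0$ there for, say, a Gaussian $\vartheta$. Enlarging $C_{\chi,\vartheta}$ makes $\eta$ smaller but never zero, so no choice of the constant closes this step; the localization to $\{\norm{\xi}_2\geq r_1/\alpha\}$ does not help, because on that region the right-hand side is still strictly below $1$ while the left-hand side can equal $1$.

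The paper's proof repairs exactly these two points. It replaces $\chi$ by an auxiliary \emph{nonnegative} $g\in S(\R^d)$ with $\supp(\widehat{g})\subseteq\overline{B_{r_1}(0)}$ and $\norm{\widehat{g}}_\infty\leq 1$, so that the frequency gap \eqref{frequency gap} gives $\sum_{\psi\in\Psi}|\widehat{\psi}|^2\leq 1-|\widehat{g}|^2$ and Lemma \ref{lem: Extension of Mallat's lemma} is legitimately applicable. The essential idea you are missing (due to Waldspurger) is then to apply that lemma for \emph{every} shift $\nu\in\R^d$ and to average the resulting family of bounds against a slowly decaying radial probability density $c$: the average $F=|\widehat{g}|^2*c$ is strictly positive everywhere and, by choosing $c$ to decay more slowly than $\eta^2$, satisfies $1-F(\xi)\leq 1-|\widehat{\vartheta}(C_{\chi,\vartheta}\,\alpha\,\xi)|^2$ on $\R^d\setminus B_{r_1}(0)$ for a suitable constant. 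A single translate of a decaying (let alone compactly supported) bump can never achieve this, which is precisely why the averaging is unavoidable. Your vectors $\nu_\psi$ from Lemma \ref{lem: how alpha comes into play} are only needed in the induction step, where your argument is already correct.
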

  
  \begin{proof}
    We perform induction on $N$. 
  
    First, let us assume that there is $\widetilde{C}>0$ so that $|\widehat{\vartheta}(\widetilde{C}\;\cdot)| \leq |\widehat{\chi}|$. In this scenario, it is easy to prove the base case, which works analogously to the proof of the base case in \cite[Theorem 1]{wiatowski2017energy}. In fact, by the Littlewood-Paley condition \eqref{ass: Littlewood-Paley condition}, for all $f \in L^2(\R^d)$, 
    \begin{align*}
      W_1(f)=\sum_{\psi \in \Psi} \norm{f*\psi}_2^2 
      &= \int_{\R^d} \sum_{\psi \in \Psi} |\widehat{f}(\xi)|^2 \cdot |\widehat{\psi}(\xi)|^2  \dxi \\
      &= \int_{\R^d} |\widehat{f}(\xi)|^2 \cdot (1-|\widehat{\chi}(\xi)|^2) \dxi
      \leq \int_{\R^d} |\widehat{f}(\xi)|^2 \cdot (1-|\widehat{\vartheta}(\widetilde{C} \cdot \xi)|^2) \dxi.
    \end{align*}
    
    The proof of the base case is harder if there is no $\widetilde{C}>0$ such that $|\widehat{\vartheta}(\widetilde{C}\,\cdot)| \leq |\widehat{\chi}|$ holds true. Here we proceed in three steps, motivated by \cite[Section 6.3]{waldspurger2017exponential}. In the first two steps we construct certain auxiliary functions, whose properties we exploit to eventually prove the base case in the last step.
  
    \textit{Step 1:} Let $r_1>0$ be as in Assumption \ref{ass: high-pass nature and analyticity of filters}, corresponding to the radius of the frequency gap of the high-pass filters $\Psi$. Choose an even function $h \in C^\infty(\R^d)$ that is supported in $\overline{B_{\frac{r_1}{2}}(0)}$ and that satisfies $h(\xi)>0$ for all $\xi \in B_{\frac{r_1}{2}}(0)$. Set $g:=\mathcal{F}(h*h)$. Then, $g \in S(\R^d)$, $g=|\widehat{h}|^2 \geq 0$, $\supp(\widehat{g})\subseteq \overline{B_{r_1}(0)}$, and $\widehat{g}(\xi)>0$ for all $\xi \in \overline{B_{\frac{r_1}{2}}(0)}$. In particular, \[m_g:=\min_{\xi \in \overline{B_{\frac{r_1}{2}}(0)}} |\widehat{g}(\xi)|^2>0.\] 
    Finally, we may assume that $\|\widehat{g}\|_{\infty}\leq 1$ by imposing $\norm{h}_2\leq 1$ if necessary.
  
    \textit{Step 2:} Let $\tilde{c}:[0,\infty) \to (0,\infty)$ be nonincreasing and such that \[\int_{\R^d}\tilde{c}(\norm{\nu}_2)\dnu=1.\] Defining the functions $c:\R^d \to [0,\infty), \ \nu \mapsto \tilde{c}(\norm{\nu}_2)$ and $F:=|\widehat{g}|^2*c$, we find that, for all $\xi \in \R^d$,
    \begin{align*}
      F(\xi)=\int_{\R^d}|\widehat{g}(\nu)|^2 \cdot c(\xi-\nu)\dnu 
      \geq m_g \int_{B_{\frac{r_1}{2}}(0)}c(\xi-\nu)\dnu
      \geq m_g \cdot \vol\left(B_{\frac{r_1}{2}}(0)\right) \cdot \tilde{c}\left(\norm{\xi}_2+\frac{r_1}{2}\right).
    \end{align*}
    Thus, if we choose $c=\tilde{c}(\norm{\cdot}_2)$ so that 
    \[\lim_{s \to \infty}\frac{\eta^2(s)}{\tilde{c}\left(s+\frac{r_1}{2}\right)}=0,\] 
    then there is $M>0$ such that, for all $\xi \in \R^d \setminus B_{M}(0)$, it holds $F(\xi) \geq |\widehat{\vartheta}(\xi)|^2$. Furthermore, $F$ is strictly positive and continuous. Hence, 
    \[m_{F}:=\min_{\xi \in \overline{B_{M}(0)}}F(\xi)>0.\] 
    Since $\lim_{s \to \infty}\eta(s)=0$, there exists $C_{\chi,\vartheta}>\alpha^{-1}$ such that, for all $\xi \in \R^d\setminus B_{r_1}(0)$,
    \[m_{F} \geq |\widehat{\vartheta}(C_{\chi,\vartheta} \cdot \alpha \cdot \xi)|^2.\] 
    Finally, we deduce $\norm{F}_{\infty}\leq 1$ from $\|\widehat{g}\|_{\infty}\leq 1$ and $\norm{c}_1=1$. Altogether, for all $\xi \in \R^d\setminus B_{r_1}(0)$,
    \begin{align}\label{eq: Waldspurger-type bound on 1-F}
      0 \leq 1-F(\xi)\leq 1-|\widehat{\vartheta}(C_{\chi,\vartheta} \cdot \alpha \cdot \xi)|^2.
    \end{align}
    
    \textit{Step 3:}
    Recall from \eqref{frequency gap} that $\sum_{\psi \in \Psi} |\widehat{\psi}|^2$ vanishes on $B_{r_1}(0)$. Since $\widehat{g}$ is supported in $\overline{B_{r_1}(0)}$ and since $\|\widehat{g}\|_{\infty}\leq 1$, we have 
    \[\sum_{\psi \in \Psi} |\widehat{\psi}(\xi)|^2\leq 1-|\widehat{g}(\xi)|^2 \quad \text{ a.e. } \xi \in \R^d.\]
    Hence, for all $f \in L^2(\R^d)$,
    \begin{align*}
      W_2(f)=\sum_{\psi \in \Psi}\sum_{\psi^\prime \in \Psi} \norm{||f*\psi|*\psi^\prime|}_2^2 &= \sum_{\psi \in \Psi}\sum_{\psi^\prime \in \Psi} \norm{|f*\psi|*\psi^\prime}_2^2 \\
      &\leq \sum_{\psi \in \Psi} \int_{\R^d} \abs{\mathcal{F}(|f*\psi|)(\xi)}^2 \cdot \left(1-|\widehat{g}(\xi)|^2\right) \dxi. 
    \end{align*}
    Using \eqref{frequency gap} and Lemma \ref{lem: Extension of Mallat's lemma}, we conclude that, for all $\nu \in \R^d$,
    \begin{align*}
      W_2(f)&\leq \sum_{\psi \in \Psi} \int_{\R^d} |\widehat{f}(\xi)|^2 \cdot |\widehat{\psi}(\xi)|^2 \cdot \left(1-|\widehat{g}(\xi-\nu)|^2\right) \dxi \\
      &\leq \int_{\R^d\setminus B_{r_1}(0)} |\widehat{f}(\xi)|^2 \cdot \left(1-|\widehat{g}(\xi-\nu)|^2\right) \dxi.
    \end{align*}
    Finally, we average this inequality with the function $c$ and insert the upper bound from \eqref{eq: Waldspurger-type bound on 1-F} to obtain
    \begin{align*}
      W_2(f)&\leq \int_{\R^d} c(\nu) \cdot \int_{\R^d\setminus B_{r_1}(0)} |\widehat{f}(\xi)|^2 \cdot \left(1-|\widehat{g}(\xi-\nu)|^2\right) \dxi \dnu \\
      &= \int_{\R^d\setminus B_{r_1}(0)} |\widehat{f}(\xi)|^2 \cdot \int_{\R^d} c(\nu)-c(\nu) \cdot |\widehat{g}(\xi-\nu)|^2  \dnu \dxi \\
      &= \int_{\R^d\setminus B_{r_1}(0)} |\widehat{f}(\xi)|^2 \cdot \left(1-F(\xi)\right) \dxi\\
      &\leq \int_{\R^d} |\widehat{f}(\xi)|^2 \cdot \left(1-|\vartheta(C_{\chi,\vartheta} \cdot \alpha \cdot \xi)|^2\right) \dxi.
    \end{align*}
    This concludes the proof of the base case.
  
    The induction step goes along the lines of \cite[Theorem 1]{wiatowski2017energy}: Suppose that \eqref{Upper bound for W_N(f)} holds for some $N \in \N$ for all $f\in L^2(\R^d)$. Our goal is then to establish the analogous upper bound for $W_{N+1}(f)$. 
    For $\psi \in \Psi$, let $\nu_\psi \in \R^d$ be as in Lemma \ref{lem: how alpha comes into play}. Note that the inverse Fourier transform of $\widehat{\vartheta}\left(C_{\chi,\vartheta} \cdot \alpha^{N-1}\,\cdot\right)$ is nonnegative by the non-negativity of $\vartheta$. Thus, applying the induction hypothesis to $|f*\psi|$, as well as employing Lemma \ref{lem: Extension of Mallat's lemma}, we find that
    \begin{align*}
      W_{N}(|f*\psi|)
      &\leq \int_{\R^d} |\mathcal{F}(|f*\psi|)(\xi)|^2 \cdot \left(1-\left|\widehat{\vartheta}\left(C_{\chi,\vartheta} \cdot \alpha^{N-1} \cdot \xi\right)\right|^2\right) \dxi \nonumber\\
      &\leq \int_{\R^d} |\widehat{f}(\xi)|^2 \cdot |\widehat{\psi}(\xi)|^2 \cdot \left(1-\left|\widehat{\vartheta}\left(C_{\chi,\vartheta} \cdot \alpha^{N-1} \cdot (\xi-\nu_\psi)\right)\right|^2\right)\dxi.
    \end{align*}
    Summing over $\psi \in \Psi$ on both sides of the above inequality yields
    \begin{align}\label{eq: W_{N+1}(f) in terms of W_{N}(f)}
      W_{N+1}(f)=\sum_{\psi \in \Psi} W_{N}(|f*\psi|)\leq \int_{\R^d} |\widehat{f}(\xi)|^2 \cdot h_{N-1}(\xi) \dxi,
    \end{align}
    where
    \begin{align*}
      h_{N-1}(\xi):=\sum_{\psi \in \Psi} |\widehat{\psi}(\xi)|^2 \cdot \left(1-\left|\widehat{\vartheta}\left(C_{\chi,\vartheta} \cdot \alpha^{N-1} \cdot (\xi-\nu_\psi)\right)\right|^2\right).
    \end{align*}
  
    Now, Lemma \ref{lem: how alpha comes into play} comes into play to establish a pointwise upper bound for $h_{N-1}$ that suffices to conclude the proof. Since $\sum_{\psi \in \Psi} |\widehat{\psi}(\xi)|^2 \leq 1$ holds for all $\xi \in \R^d$, it is enough to show that we have, for all $\psi \in \Psi$, and all $\xi \in \supp(\widehat{\psi})$,
    \begin{align*}
      1-\left|\widehat{\vartheta}\left(C_{\chi,\vartheta} \cdot \alpha^{N-1} \cdot (\xi-\nu_\psi)\right)\right|^2 \leq 1-\left|\widehat{\vartheta}\left(C_{\chi,\vartheta} \cdot \alpha^{N} \cdot \xi\right)\right|^2.
    \end{align*}
    In fact, $1-|\widehat{\vartheta}|^2=1-\eta^2(\norm{\cdot}_2)$, where $1-\eta^2$ is nondecreasing, and Lemma \ref{lem: how alpha comes into play} implies that
    \[\norm{C_{\chi,\vartheta} \cdot \alpha^{N-1} \cdot (\xi-\nu_\psi)}_2 \leq \norm{C_{\chi,\vartheta} \cdot \alpha^{N} \cdot \xi}_2.\]
    Consequently, we have, for all $\xi \in \R^d$,
    \[h_{N-1}(\xi)\leq 1-\left|\widehat{\vartheta}\left(C_{\chi,\vartheta} \cdot \alpha^{N} \cdot \xi\right)\right|^2.\]
    Inserting this into \eqref{eq: W_{N+1}(f) in terms of W_{N}(f)} concludes the induction step and thus the proof of the theorem.
  \end{proof}

  \begin{remark}
    Smaller values of $\alpha$ lead to faster convergence to $0$ of the integral on the right side of \eqref{Upper bound for W_N(f)} as $N \to \infty$. Hence, let us briefly comment on the impact of the parameters on the value of $\alpha$. To this end, we note that the function 
    \[\alpha:(0,1)\times [0,1) \to (0,1), \quad (\gamma,\rho)\mapsto \sqrt{1-\frac{4\gamma}{(1+\gamma)^2}\cdot (1-\rho)^2}\]
    is strictly decreasing in $\gamma$, while it is strictly increasing in $\rho$. Moreover, we have the asymptotic behavior
    \[\lim_{(\gamma,\rho)\to (1,0)} \alpha(\gamma,\rho)=0.\]
    We interpret this as follows: $\alpha$ is small if the size of the Fourier supports of the filters is approximately constant across all scales and if the filters are well localized in terms of their maximum angular frequency spread. This result appears to be consistent with the earlier findings stated in \cite[Theorem 1]{wiatowski2017energy}, which indicate a faster decay in lower dimensions: A reduction of the underlying dimension implicitly forces a higher concentration of angular frequency, with a maximum concentration achieved in dimension $d=1$ (Fig.~\ref{fig: plot of alpha}).
  \end{remark}
  
  \begin{figure}[h]
    \centering
    \includegraphics{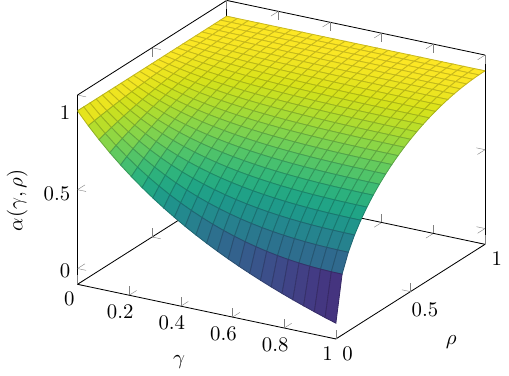}
    \caption{Plot of $\alpha$ as a function of the parameters $\gamma$ and $\rho$.
    Smaller values of $\alpha$ guarantee faster decay.} 
    \label{fig: plot of alpha}
  \end{figure}
  
  Next, we show how Theorem \ref{thm: W_N(f) is upper-bounded by integral} entails fast (up to exponential) energy decay for signals belonging to generalized Sobolev spaces that are tailored to the frequency localization of the high-pass filters $\Psi$. We introduce certain weights to define these spaces.
  
  \begin{definition}
    Let $k>0$. We say that a function $\omega:(0,\infty)\to (0,\infty)$ is a \textbf{weakly $t^k$-dominated weight} if $\omega$ is nondecreasing and if there exists $T>0$ so that the auxiliary function
    \[h_{k,\omega}:(0,\infty) \to (0,\infty), \ t \mapsto t^k \cdot \omega^{-2}(t)\] 
    is bounded on $(0,T)$, nondecreasing on $[T,\infty)$, and satisfies $\lim_{t \to \infty}h_{k,\omega}(t)=\infty$. 
  
    If, in addition to the above, $h_{k,\omega}$ is nondecreasing on the entire interval $(0,\infty)$, then we say that $\omega$ is a \textbf{strongly $t^k$-dominated weight}.
  \end{definition}
  
  \begin{remark}\label{rm: Sufficient condition if the weight is differentiable}
    If $\omega$ is differentiable, we can easily establish a sufficient criterion for the (simultaneous) monotonicity of $\omega$ and $h_{k,\omega}$, relying on the non-negativity of their derivatives. In fact, for all $t>0$, 
    \begin{align*}
      h_{k,\omega}^\prime(t)=k\cdot t^{k-1} \cdot \omega^{-2}(t) - 2 t^k \cdot \omega^{-3}(t) \cdot \omega^\prime(t)
      = \omega^{-2}(t) \cdot t^{k-1} \cdot \left(k-2t\cdot \omega^{-1}(t) \cdot \omega^\prime(t)\right).
    \end{align*}
    Thus, $h_{k,\omega}^\prime(t)\geq 0$ is equivalent to $k\cdot \omega(t)\geq 2 t \cdot \omega^\prime(t)$. Altogether, if there is $T\geq 0$ so that for all $t>T$, 
    \[k\cdot \omega(t)\geq 2 t \cdot \omega^\prime(t)\geq 0,\]
    then both $\omega$ and $h_{k,\omega}$ are nondecreasing on $(T,\infty)$.
  \end{remark}
  
  We define, for all $\psi \in \Psi$,
  \begin{align}\label{def: diameter of spectral support of a filter}
    d_\psi:=\inf_{\xi^\prime \in \R^d}\sup_{\xi \in \supp(\widehat{\psi})} \norm{\xi-\xi^\prime}_2=\min_{\xi^\prime \in \R^d}\max_{\xi \in \supp(\widehat{\psi})} \norm{\xi-\xi^\prime}_2.
  \end{align}
  For a weakly $t^k$-dominated weight, we consider the generalized Sobolev space 
  \begin{align}\label{def: generalized Sobolev space}
    \mathcal{D}_\omega(\Psi;L^2(\R^d)):=\set{f \in L^2(\R^d)~\middle|~ \sum_{\psi \in \Psi} \omega^2(d_\psi) \cdot \norm{f*\psi}_2^2< \infty}.
  \end{align}

  \begin{theorem}\label{thm: Convergence rates for W_N(f)}
    Let $\vartheta \in L^1(\R^d)$ be as in Theorem \ref{thm: W_N(f) is upper-bounded by integral}, and assume that there are $k=k_{\vartheta}>0$, $C=C_{\vartheta}>0$ such that, for all $\xi \in \R^d$, 
    \begin{align}\label{ass: order of convergence for 1-|vartheta|^2}
      1-|\widehat{\vartheta}(\xi)|^2\leq C \cdot \norm{\xi}_2^{k}.
    \end{align}
    If $\omega:(0,\infty)\to (0,\infty)$ is a weakly $t^k$-dominated weight, then
    we have, for all $f \in \mathcal{D}_\omega(\Psi;L^2(\R^d))$,  
    \[W_N(f) \in \mathcal{O}\left(\omega^{-2}\left(\alpha^{-N}\right)\right),\]
    where $\alpha=\alpha(\gamma,\rho):=\sqrt{1-\frac{4\gamma}{(1+\gamma)^2}\cdot (1-\rho)^2} \in (0,1)$.
  
    Moreover, if there is $\widetilde{C}>0$ so that $|\widehat{\vartheta}(\widetilde{C} \, \cdot)| \leq |\widehat{\chi}|$, and if $\omega$ is a strongly $t^k$-dominated weight, then the following explicit upper bound holds: For all $N \in \N_{\geq 2}$,
    \begin{align*}
      W_N(f)\leq \max\left\{1,C \cdot \widetilde{C}^k \cdot \alpha^{-k}\right\} \cdot \left(\sum_{\psi \in \Psi} \omega^2(d_\psi) \cdot \norm{f*\psi}_2^2\right) \cdot  \omega^{-2}(\alpha^{-N}).
    \end{align*}
  \end{theorem}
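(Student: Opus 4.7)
My approach combines Theorem \ref{thm: W_N(f) is upper-bounded by integral} with the recursive identity $W_N(f) = \sum_{\psi \in \Psi} W_{N-1}(|f * \psi|)$ and exploits Lemma \ref{lem: Extension of Mallat's lemma} to introduce filter-dependent shifts $\nu_\psi$ realising the infimum in the definition of $d_\psi$. This couples the integral bound for $W_{N-1}$ with the Fourier localisation of each individual filter.

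For every $\psi \in \Psi$, applying Theorem \ref{thm: W_N(f) is upper-bounded by integral} to the summand $W_{N-1}(|f * \psi|)$ and then Lemma \ref{lem: Extension of Mallat's lemma} with $\nu_\psi$ as above yields
\[ W_{N-1}(|f * \psi|) \leq \int_{\R^d} |\widehat{f * \psi}(\xi)|^2 \cdot \left(1 - |\widehat{\vartheta}(C_{\chi,\vartheta}\alpha^{N-2}(\xi - \nu_\psi))|^2\right)\dxi. \]
On $\supp(\widehat{\psi})$ we have $\norm{\xi - \nu_\psi}_2 \leq d_\psi$, so assumption \eqref{ass: order of convergence for 1-|vartheta|^2} gives $W_{N-1}(|f * \psi|) \leq C \cdot C_{\chi,\vartheta}^k \cdot \alpha^{k(N-2)} \cdot d_\psi^k \cdot \norm{f * \psi}_2^2$. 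Combined with the trivial bound $W_{N-1}(|f * \psi|) \leq \norm{f * \psi}_2^2$ and summation over $\psi$,
\[ W_N(f) \leq \sum_{\psi \in \Psi} \min\{1, C \cdot C_{\chi,\vartheta}^k \cdot \alpha^{k(N-2)} \cdot d_\psi^k\} \cdot \norm{f * \psi}_2^2. \]

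To extract the rate $\omega^{-2}(\alpha^{-N})$, I insert $1 = \omega^2(d_\psi) \cdot \omega^{-2}(d_\psi)$ into each summand and bound the per-filter factor $\min\{1, C \cdot C_{\chi,\vartheta}^k \cdot \alpha^{k(N-2)} \cdot d_\psi^k\} \cdot \omega^{-2}(d_\psi)$ uniformly in $\psi$ by a constant multiple of $\omega^{-2}(\alpha^{-N})$. The case analysis is: if $d_\psi \geq \alpha^{-N}$, then $\omega^{-2}(d_\psi) \leq \omega^{-2}(\alpha^{-N})$ by monotonicity of $\omega$; if $d_\psi \leq \alpha^{-N}$, then $d_\psi^k \omega^{-2}(d_\psi) = h_{k,\omega}(d_\psi) \leq h_{k,\omega}(\alpha^{-N}) = \alpha^{-kN}\omega^{-2}(\alpha^{-N})$ by monotonicity of $h_{k,\omega}$, producing the explicit constant of the claimed form.

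The main obstacle is the regime $d_\psi < T$, which arises only in the first (weakly dominated) assertion, since there $h_{k,\omega}$ need not be monotone and the case analysis above breaks down. For this regime I use the aggregate estimate $\sum_{d_\psi < T} \min\{\cdot\} \norm{f * \psi}_2^2 \leq C \cdot C_{\chi,\vartheta}^k \cdot \alpha^{k(N-2)} \cdot T^k \cdot \norm{f}_2^2$, which follows from the Parseval frame property. Since $\alpha^{k(N-2)} \omega^2(\alpha^{-N}) = \alpha^{-2k}/h_{k,\omega}(\alpha^{-N}) \to 0$ by $\lim_{t\to\infty} h_{k,\omega}(t) = \infty$, this contribution is $o(\omega^{-2}(\alpha^{-N}))$ and absorbs into the $\mathcal{O}$-bound, completing the first part. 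For the second (explicit) assertion, strong domination makes $h_{k,\omega}$ monotone on all of $(0,\infty)$, which eliminates this complication, and the hypothesis $|\widehat{\vartheta}(\widetilde{C}\,\cdot)| \leq |\widehat{\chi}|$ lets me invoke Theorem \ref{thm: W_N(f) is upper-bounded by integral} with $C_{\chi,\vartheta} = \widetilde{C}$ for all $N \geq 1$, so the recursion is valid for every $N \geq 2$ and delivers the stated explicit uniform constant.
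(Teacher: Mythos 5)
Your proposal is correct and follows essentially the same route as the paper: one step of the recursion $W_N(f)=\sum_{\psi}W_{N-1}(|f*\psi|)$, the integral bound of Theorem \ref{thm: W_N(f) is upper-bounded by integral}, Lemma \ref{lem: Extension of Mallat's lemma} to recentre each filter at the minimiser defining $d_\psi$, and then the insertion of $\omega^2\cdot\omega^{-2}$ together with the monotonicity of $\omega$ and $h_{k,\omega}$. The only organisational difference is where the small/large dichotomy is performed: the paper splits the frequency integral for each $\psi$ over $B_{\alpha^{-N}}(\xi_\psi)$ and its complement and compares $\norm{\xi-\xi_\psi}_2$ with $\alpha^{-N}$ pointwise, whereas you first coarsen $\norm{\xi-\nu_\psi}_2$ to $d_\psi$ and then case-split on $d_\psi$ versus $\alpha^{-N}$; your separate aggregate treatment of the filters with $d_\psi<T$ is sound but could be avoided by bounding $h_{k,\omega}(d_\psi)\leq \sup_{t\in(0,\alpha^{-N}]}h_{k,\omega}(t)=h_{k,\omega}(\alpha^{-N})$ for large $N$, exactly as in the paper's equation for the supremum of $h_{k,\omega}$. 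One small quantitative remark: your careful application of Theorem \ref{thm: W_N(f) is upper-bounded by integral} to $W_{N-1}$ produces the dilation $\alpha^{N-2}$ and hence the explicit constant $\max\{1,C\widetilde{C}^k\alpha^{-2k}\}$, while the paper writes $\alpha^{N-1}$ at the corresponding step (which matches the intermediate estimate inside the induction of that theorem rather than its literal statement) and so arrives at $\alpha^{-k}$; this affects only the constant, not the rate.
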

  
  \begin{remark}
    By the non-negativity of $\vartheta$, the condition \eqref{ass: order of convergence for 1-|vartheta|^2} can only be satisfied if $k=k_\vartheta\leq 2$. Let us briefly sketch the reason for this. By definition of the $L^1$-Fourier transform, we have, for all $\xi \in \R^d$,
    \begin{align*}
      1-|\widehat{\vartheta}(\xi)|^2&= \re\left(|\widehat{\vartheta}(0)|^2-|\widehat{\vartheta}(\xi)|^2\right)\\
      &= \re\left(\int_{\R^{2d}} \vartheta(x)\cdot \vartheta(y) \cdot \left(1-e^{-2\pi \cdot i \cdot \ip{\xi,x-y}}\right)\dxy\right) \\
      &= \int_{\R^{2d}} \vartheta(x)\cdot \vartheta(y) \cdot \left(1-\cos(2\pi \cdot \ip{\xi,x-y})\right)\dxy.
    \end{align*} 
    For all $z \in [-1,1]$, 
    \[1-\cos(z)\geq \frac{z^2}{3}.\] 
    Hence, if $\xi=s\cdot e_1$, where $s>0$, and $e_1$ denotes the first standard unit vector in $\R^d$, we obtain
    \begin{align*}
      \frac{1-|\widehat{\vartheta}(\xi)|^2}{\norm{\xi}_2^2} &\geq \int_{\set{(x,y) \in \R^{2d}~:~|2\pi\cdot s\cdot (x_1-y_1)|\leq 1}} \vartheta(x)\cdot \vartheta(y) \cdot (x_1-y_1)^2 \dxy\\
      &\xrightarrow{s \to 0} \int_{\R^{2d}} \vartheta(x)\cdot \vartheta(y) \cdot \left(x_1-y_1\right)^2\dxy>0.
    \end{align*}
    We conclude that $k_\vartheta \leq 2$.

    Now, observe that larger values for $k$ in \eqref{ass: order of convergence for 1-|vartheta|^2} allow more flexibility in the choice of $\omega$, as the assumptions regarding $h_{k,\omega}$ are easier to fulfill. However, as justified above, the largest possible value for any $\vartheta \in L^1(\R^d)$ is $k_\vartheta=2$. Finally, note that, while we can show an analogous result to Theorem \ref{thm: W_N(f) is upper-bounded by integral} for $\vartheta \in L^2(\R^d)$, we would not gain a qualitative improvement to Theorem \ref{thm: Convergence rates for W_N(f)}, since the same threshold order of convergence $k_\vartheta=2$ in \eqref{ass: order of convergence for 1-|vartheta|^2} would still hold.
  \end{remark}
  
  \begin{proof}[Proof of Theorem \ref{thm: Convergence rates for W_N(f)}]
    Fix $f \in \mathcal{D}_\omega(\Psi;L^2(\R^d))$. Let, for every $\psi \in \Psi$, 
    \[\xi_\psi \in \argmin_{\xi^\prime \in \R^d}\max_{\xi \in \supp(\widehat{\psi})} \norm{\xi-\xi^\prime}_2.\] 
    We start in a similar fashion to the induction step of the proof of Theorem \ref{thm: W_N(f) is upper-bounded by integral}. By the same theorem, by the path structure of the scattering network, and by Lemma \ref{lem: Extension of Mallat's lemma}, we find that, for all sufficiently large $N \in \N$,
    \begin{align}\label{eq: (9)}
      W_{N}(f)&=\sum_{\psi \in \Psi} W_{N-1}(|f*\psi|) \nonumber\\
      &\leq \sum_{\psi \in \Psi} \int_{\R^d} \abs{\mathcal{F}(|f*\psi|)(\xi)}^2 \cdot \left(1-\abs{\widehat{\vartheta}\left(C_{\chi,\vartheta}\cdot \alpha^{N-1}\cdot \xi\right)}^2\right) \dxi \nonumber\\
      &\leq \sum_{\psi \in \Psi} \int_{\R^d} |\widehat{f}(\xi)|^2 \cdot |\widehat{\psi}(\xi)|^2 \cdot \left(1-\abs{\widehat{\vartheta}\left(C_{\chi,\vartheta}\cdot \alpha^{N-1}\cdot (\xi-\xi_\psi)\right)}^2\right) \dxi.
    \end{align}
    
    Our strategy is to split the integral into small scales and large scales compared with $\alpha^{-N}$, and to establish upper bounds on those terms separately. Note that our conditions on $h_{k,\omega}$ guarantee that, for all sufficiently large $N \in \N$, 
    \begin{align}\label{eq: (8)}
      \sup_{t \in (0,\alpha^{-N}]} h_{k,\omega}(t)= h_{k,\omega}(\alpha^{-N})=\alpha^{-kN}\cdot \omega^{-2}(\alpha^{-N}).
    \end{align}
    For the small scales, we use \eqref{ass: order of convergence for 1-|vartheta|^2} and \eqref{eq: (8)} to derive
    \begin{align*}
      \int_{B_{\alpha^{-N}}(\xi_\psi)} &|\widehat{f}(\xi)|^2 \cdot |\widehat{\psi}(\xi)|^2 \cdot \left(1-\abs{\widehat{\vartheta}\left(C_{\chi,\vartheta}\cdot \alpha^{N-1}\cdot (\xi-\xi_\psi)\right)}^2\right) \dxi\\
      &\leq \int_{B_{\alpha^{-N}}(\xi_\psi)} |\widehat{f}(\xi)|^2 \cdot |\widehat{\psi}(\xi)|^2 \cdot C \cdot \norm{C_{\chi,\vartheta} \cdot \alpha^{N-1} \cdot (\xi-\xi_\psi)}_2^k \dxi \\
      &= C \cdot C_{\chi,\vartheta}^k \cdot  \alpha^{k(N-1)} \cdot \int_{B_{\alpha^{-N}}(\xi_\psi)} |\widehat{f}(\xi)|^2 \cdot |\widehat{\psi}(\xi)|^2 \cdot \omega^{2}(\norm{\xi-\xi_\psi}_2) \cdot h_{k,\omega}(\norm{\xi-\xi_\psi}_2) \dxi \\
      &\leq C \cdot C_{\chi,\vartheta}^k \cdot \alpha^{-k} \cdot \omega^{-2}(\alpha^{-N}) \cdot \int_{B_{\alpha^{-N}}(\xi_\psi)} |\widehat{f}(\xi)|^2 \cdot |\widehat{\psi}(\xi)|^2 \cdot \omega^{2}(d_\psi) \dxi.
    \end{align*}
    The monotonicity of $\omega$ suffices to control the large scales,
    \begin{align*}
      \int_{\R^d\setminus B_{\alpha^{-N}}(\xi_\psi)} &|\widehat{f}(\xi)|^2 \cdot |\widehat{\psi}(\xi)|^2 \cdot \left(1-\abs{\widehat{\vartheta}\left(C_{\chi,\vartheta}\cdot \alpha^{N-1}\cdot (\xi-\xi_\psi)\right)}^2\right) \dxi \\
      &\leq \int_{\R^d\setminus B_{\alpha^{-N}}(\xi_\psi)} |\widehat{f}(\xi)|^2 \cdot |\widehat{\psi}(\xi)|^2 \cdot \omega^{2}(\norm{\xi-\xi_\psi}_2) \cdot \omega^{-2}(\norm{\xi-\xi_\psi}_2) \dxi \\
      &\leq \omega^{-2}(\alpha^{-N}) \cdot \int_{\R^d\setminus B_{\alpha^{-N}}(\xi_\psi)} |\widehat{f}(\xi)|^2 \cdot |\widehat{\psi}(\xi)|^2 \cdot \omega^{2}(d_\psi) \dxi.
    \end{align*}
    Inserting those estimates into \eqref{eq: (9)} yields, for all sufficiently large $N \in \N$, 
    \begin{align*}
      W_{N}(f) &\leq \max\left\{1,C \cdot C_{\chi,\vartheta}^k \cdot \alpha^{-k}\right\} \cdot \omega^{-2}(\alpha^{-N}) \cdot \sum_{\psi \in \Psi} \int_{\R^d} |\widehat{f}(\xi)|^2 \cdot |\widehat{\psi}(\xi)|^2 \cdot \omega^{2}(d_\psi) \dxi \\
      &= \max\left\{1,C \cdot C_{\chi,\vartheta}^k \cdot \alpha^{-k}\right\} \cdot \left(\sum_{\psi \in \Psi} \omega^{2}(d_\psi) \cdot \norm{f*\psi}_2^2\right) \cdot \omega^{-2}(\alpha^{-N}).
    \end{align*}
    Now, if there is $\widetilde{C}>0$ so that $|\widehat{\vartheta}(C_{\chi,\vartheta} \, \cdot)| \leq |\widehat{\chi}|$ and if $h_{k,\omega}$ is nondecreasing on the entire interval $(0,\infty)$, then \eqref{eq: (9)} and \eqref{eq: (8)}, and thus also the latter bound for $W_N(f)$, are in fact valid for all $N \in \N_{\geq 2}$.
    This concludes the proof of the theorem.
  \end{proof}
  
  The following lemma shows that fast decay (with respect to the weight $\omega$) of the Fourier transform suffices, independent of $\Psi$, for a signal to belong to the generalized Sobolev space $\mathcal{D}_\omega(\Psi;L^2(\R^d))$.
  
  \begin{lemma}\label{lem: Inclusion relation between Fourier-weighted L2 space and decomp space}
    Let $k>0$. If $\omega:(0,\infty)\to (0,\infty)$ is a weakly $t^k$-dominated weight, then 
    \[\mathcal{F}L_\omega^2(\R^d) \subseteq \mathcal{D}_\omega(\Psi;L^2(\R^d)).\] 
  \end{lemma}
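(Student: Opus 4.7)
The plan is to reduce the claim to a pointwise frequency-domain estimate via Plancherel and the convolution theorem. Starting with $f \in \mathcal{F}L^2_\omega(\R^d)$, I would write
\begin{align*}
\sum_{\psi \in \Psi} \omega^2(d_\psi) \norm{f*\psi}_2^2 = \int_{\R^d} |\widehat{f}(\xi)|^2 \left( \sum_{\psi \in \Psi} \omega^2(d_\psi) \, |\widehat{\psi}(\xi)|^2 \right) \dxi
\end{align*}
and then aim to dominate the bracketed sum pointwise by $C \cdot \omega^2(\norm{\xi}_2) + M$ for suitable constants $C, M > 0$, at which point integrability against $|\widehat{f}|^2$ follows from $f \in \mathcal{F}L^2_\omega(\R^d) \cap L^2(\R^d)$ and the Littlewood-Paley condition \eqref{ass: Littlewood-Paley condition}.

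The central geometric input is that for $\psi \in \Psi_j$ one has $\supp(\widehat\psi) \subseteq \s_{r_j, r_{j+\kappa}}$, so taking $\xi' = 0$ in the definition of $d_\psi$ yields $d_\psi \leq r_{j+\kappa}$, while any $\xi \in \supp(\widehat\psi)$ satisfies $\norm{\xi}_2 \geq r_j \geq \gamma \cdot r_{j+\kappa} \geq \gamma \cdot d_\psi$. Hence $d_\psi \leq \gamma^{-1}\norm{\xi}_2$ on the support of $\widehat\psi$, and by monotonicity of $\omega$, $\omega(d_\psi) \leq \omega(\gamma^{-1}\norm{\xi}_2)$.

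The next step is to convert this dilation by $\gamma^{-1}$ into a multiplicative factor, and this is where the weight hypothesis enters. Let $T > 0$ be the threshold from the definition of a weakly $t^k$-dominated weight. For $\norm{\xi}_2 \geq T$ (hence also $\gamma^{-1}\norm{\xi}_2 \geq T$), the monotonicity of $h_{k,\omega}$ on $[T, \infty)$ gives
\begin{align*}
\norm{\xi}_2^k \, \omega^{-2}(\norm{\xi}_2) = h_{k,\omega}(\norm{\xi}_2) \leq h_{k,\omega}(\gamma^{-1}\norm{\xi}_2) = \gamma^{-k}\norm{\xi}_2^k \, \omega^{-2}(\gamma^{-1}\norm{\xi}_2),
\end{align*}
so $\omega^2(d_\psi) \leq \omega^2(\gamma^{-1}\norm{\xi}_2) \leq \gamma^{-k} \omega^2(\norm{\xi}_2)$. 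For $\norm{\xi}_2 < T$ (so the sum is nontrivial only when $\norm{\xi}_2 \geq r_1$ by the frequency gap \eqref{frequency gap}), I simply use $d_\psi < \gamma^{-1}T$ and monotonicity of $\omega$ to get $\omega^2(d_\psi) \leq \omega^2(\gamma^{-1}T) =: M < \infty$.

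Combining these two cases with $\sum_{\psi} |\widehat{\psi}(\xi)|^2 \leq 1$ gives
\begin{align*}
\sum_{\psi} \omega^2(d_\psi)\,|\widehat{\psi}(\xi)|^2 \leq \gamma^{-k}\omega^2(\norm{\xi}_2)\,\mathds{1}_{\{\norm{\xi}_2 \geq T\}} + M \cdot \mathds{1}_{\{\norm{\xi}_2 < T\}},
\end{align*}
and integrating against $|\widehat{f}|^2$ yields
\begin{align*}
\sum_{\psi \in \Psi} \omega^2(d_\psi) \norm{f*\psi}_2^2 \leq \gamma^{-k} \norm{\omega(\norm{\cdot}_2) \widehat{f}}_2^2 + M \norm{f}_2^2 < \infty,
\end{align*}
proving $f \in \mathcal{D}_\omega(\Psi; L^2(\R^d))$. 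There is no real obstacle here; the only subtlety is cleanly splitting the frequency integral at the threshold $T$ to accommodate that $h_{k,\omega}$ is only \emph{eventually} monotone in the weakly dominated case, which is exactly why the small-frequency tail must be absorbed via the $L^2$-norm rather than the $\mathcal{F}L^2_\omega$-norm.
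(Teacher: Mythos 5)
Your proof is correct and follows essentially the same route as the paper's: Plancherel, the geometric bound $d_\psi\leq r_{j+\kappa}\leq\gamma^{-1}r_j\leq\gamma^{-1}\norm{\xi}_2$ on $\supp(\widehat{\psi})$, and the eventual monotonicity of $h_{k,\omega}$ to absorb the dilation by $\gamma^{-1}$ into the factor $\gamma^{-k}$. The only (harmless) difference is that you split the frequency integral at $\norm{\xi}_2=T$ and absorb the low-frequency part via $M\norm{f}_2^2$, whereas the paper splits the sum over scale indices at $j=J$ and controls the small scales by $\frac{\omega^2(r_{J+\kappa})}{\omega^2(r_1)}\,\omega^2(\norm{\xi}_2)$ using the frequency gap.
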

  
  \begin{proof}
    Recall from Assumption \ref{ass: high-pass nature and analyticity of filters} that $\Psi$ can be decomposed into $\Psi=\bigcup_{j\in \N} \Psi_j$ such that $\supp(\widehat{\psi})\subseteq \mathcal{S}_{r_j,r_{j+\kappa}}$ whenever $\psi \in \Psi_j$. In particular, $d_\psi \leq r_{j+\kappa}\leq \gamma^{-1}\cdot r_j$. 
    By assumption on $\omega$, there exists $J \in \N$ such that $h_{k,\omega}$ 
    is nondecreasing on $(r_J,\infty)$. Hence, for all $j \in \N_{>J}$, 
    \begin{align}\label{eq: (10)}
      \omega^2(d_\psi) \leq \omega^2(\gamma^{-1}\cdot r_j) \leq \gamma^{-k} \cdot \omega^2(r_j).
    \end{align}
    Let $f \in \mathcal{F}L_\omega^2(\R^d)$. We aim to show that the sum $\sum_{\psi \in \Psi} \omega^{2}(d_\psi) \cdot \norm{f*\psi}_2^2$ converges. To do so, we split the sum into small scales and large scales compared with $J$,
    \begin{align*}
      \sum_{j=1}^J \sum_{\psi \in \Psi_j} \int_{\mathcal{S}_{r_j,r_{j+\kappa}}} |\widehat{f}(\xi)|^2 \cdot |\widehat{\psi}(\xi)|^2 \cdot \omega^{2}(d_\psi) \dxi + \sum_{j=J+1}^\infty \sum_{\psi \in \Psi_j} \int_{\mathcal{S}_{r_j,r_{j+\kappa}}} |\widehat{f}(\xi)|^2 \cdot |\widehat{\psi}(\xi)|^2 \cdot \omega^{2}(d_\psi) \dxi,
    \end{align*}
    so that it suffices to bound these expressions individually. We can easily control the small scales by the monotonicity of $\omega$, 
    \begin{align*}
      \sum_{j=1}^J \sum_{\psi \in \Psi_j} \int_{\mathcal{S}_{r_j,r_{j+\kappa}}} &|\widehat{f}(\xi)|^2 \cdot |\widehat{\psi}(\xi)|^2 \cdot \omega^{2}(d_\psi) \dxi \\
      &\leq \omega^{2}(r_{J+\kappa}) \cdot \sum_{j=1}^J \sum_{\psi \in \Psi_j} \int_{\mathcal{S}_{r_j,r_{j+\kappa}}} |\widehat{f}(\xi)|^2 \cdot |\widehat{\psi}(\xi)|^2 \dxi \\
      &\leq \frac{\omega^{2}(r_{J+\kappa})}{\omega^2(r_1)} \cdot \sum_{j=1}^J \sum_{\psi \in \Psi_j} \int_{\mathcal{S}_{r_j,r_{j+\kappa}}} |\widehat{f}(\xi)|^2 \cdot |\widehat{\psi}(\xi)|^2 \cdot \omega^2(\norm{\xi}_2) \dxi.
    \end{align*} 
    For the large scales, in addition to the monotonicity of $\omega$, we also use \eqref{eq: (10)} to find that
    \begin{align*}
      \sum_{j=J+1}^\infty \sum_{\psi \in \Psi_j} \int_{\mathcal{S}_{r_j,r_{j+\kappa}}} &|\widehat{f}(\xi)|^2 \cdot |\widehat{\psi}(\xi)|^2 \cdot \omega^{2}(d_\psi) \dxi \\
      &\leq \sum_{j=J+1}^\infty \sum_{\psi \in \Psi_j} \omega^{2}(\gamma^{-1}\cdot r_{j}) \cdot \int_{\mathcal{S}_{r_j,r_{j+\kappa}}} |\widehat{f}(\xi)|^2 \cdot |\widehat{\psi}(\xi)|^2 \dxi \\
      &\leq \sum_{j=J+1}^\infty \sum_{\psi \in \Psi_j} \gamma^{-k} \cdot \omega^{2}(r_{j}) \cdot \int_{\mathcal{S}_{r_j,r_{j+\kappa}}} |\widehat{f}(\xi)|^2 \cdot |\widehat{\psi}(\xi)|^2 \dxi \\
      &\leq \sum_{j=J+1}^\infty \sum_{\psi \in \Psi_j} \gamma^{-k} \cdot \int_{\mathcal{S}_{r_j,r_{j+\kappa}}} |\widehat{f}(\xi)|^2 \cdot |\widehat{\psi}(\xi)|^2 \cdot \omega^2(\norm{\xi}_2) \dxi.
    \end{align*}
    Altogether, letting $C=\max\set{\frac{\omega^{2}(r_{J+\kappa})}{\omega^2(r_1)}, \gamma^{-k}}$, we obtain
    \begin{align*}
      \sum_{\psi \in \Psi} \omega^{2}(d_\psi) \cdot \norm{f*\psi}_2^2 &\leq C \cdot \sum_{j=1}^\infty \sum_{\psi \in \Psi_j} \int_{\mathcal{S}_{r_j,r_{j+\kappa}}} |\widehat{f}(\xi)|^2 \cdot |\widehat{\psi}(\xi)|^2 \cdot \omega^2(\norm{\xi}_2) \dxi \\
      &\leq C \cdot \kappa \cdot \sum_{\psi \in \Psi} \int_{\R^d} |\widehat{f}(\xi)|^2 \cdot |\widehat{\psi}(\xi)|^2 \cdot \omega^2(\norm{\xi}_2) \dxi \\
      &\leq  C \cdot \kappa \cdot \int_{\R^d} |\widehat{f}(\xi)|^2 \cdot \omega^2(\norm{\xi}_2) \dxi < \infty.
    \end{align*}
  \end{proof}
  
  There are choices of the weight $\omega$ and the filter bank $\Psi$, for which the inclusion $\mathcal{F}L_\omega^2(\R^d)\subseteq \mathcal{D}_\omega(\Psi;L^2(\R^d))$ is strict, i.e., the spaces $\mathcal{F}L_\omega^2(\R^d)$ and $\mathcal{D}_\omega(\Psi;L^2(\R^d))$ do not coincide, see Remark \ref{rm: strict inclusion between Fourier weighted L2 and decomp space}. 
  
  As a consequence of Theorem \ref{thm: Convergence rates for W_N(f)} and Lemma \ref{lem: Inclusion relation between Fourier-weighted L2 space and decomp space}, we obtain asymptotic convergence rates for signals of (logarithmic) Sobolev regularity.
  
  \begin{corollary}\label{cor: Decay rates for (Log)Sobolev functions}
    Suppose that Assumption \ref{ass: high-pass nature and analyticity of filters} holds for $\Psi$ with parameters $\gamma$ and $\rho$. 
    As before, let 
    \[\alpha:=\sqrt{1-\frac{4\gamma}{(1+\gamma)^2}\cdot (1-\rho)^2} \: \in (0,1).\] 
    Then, the following hold:
    \begin{enumerate}[a)]
      \item If $f \in H^s(\R^d)$ for some $s>0$, then 
      \[W_N(f) \in \mathcal{O}(\alpha^{2\min\set{s,1}\cdot N}).\] 
      \item If $f \in H^s_{\log}(\R^d)$ for some $s>0$, then 
      \[W_N(f)\in \mathcal{O}(N^{-2s}).\] 
    \end{enumerate}
  \end{corollary}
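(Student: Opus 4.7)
The plan is to instantiate Theorem \ref{thm: Convergence rates for W_N(f)} with $\vartheta$ a Gaussian, suitably normalized so that $\widehat{\vartheta}(\xi)=e^{-\pi\|\xi\|_2^2}$; this choice satisfies $1-|\widehat{\vartheta}(\xi)|^2\leq 2\pi\|\xi\|_2^2$ (using $1-e^{-u}\leq u$) and thus admits $k=2$ in assumption \eqref{ass: order of convergence for 1-|vartheta|^2}. Combined with Lemma \ref{lem: Inclusion relation between Fourier-weighted L2 space and decomp space}, the task then reduces to verifying that the relevant Sobolev (resp.\ log-Sobolev) weight is weakly $t^2$-dominated and reading off the asymptotic rate $\omega^{-2}(\alpha^{-N})$.

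For part (a) in the range $s\in(0,1)$, I would take $\omega=\omega_s$. Monotonicity of $\omega_s$ is clear, and Remark \ref{rm: Sufficient condition if the weight is differentiable} applied with $k=2$ reduces the monotonicity of $h_{2,\omega_s}(t)=t^2(1+t^2)^{-s}$ to $1+t^2\geq st^2$, which holds on all of $(0,\infty)$ whenever $s<1$; moreover $h_{2,\omega_s}(t)\sim t^{2-2s}\to\infty$. Hence $\omega_s$ is strongly $t^2$-dominated, so Lemma \ref{lem: Inclusion relation between Fourier-weighted L2 space and decomp space} gives $f\in H^s(\R^d)=\mathcal{F}L^2_{\omega_s}(\R^d)\subseteq\mathcal{D}_{\omega_s}(\Psi;L^2(\R^d))$, and Theorem \ref{thm: Convergence rates for W_N(f)} yields
\[
W_N(f)\in\mathcal{O}\bigl(\omega_s^{-2}(\alpha^{-N})\bigr)=\mathcal{O}\bigl((1+\alpha^{-2N})^{-s}\bigr)=\mathcal{O}(\alpha^{2sN}).
\]

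For $s\geq 1$, the weight $\omega_s$ itself is no longer $t^2$-dominated (at $s=1$ the auxiliary function $h_{2,\omega_1}(t)=t^2/(1+t^2)$ is bounded above by $1$, and for $s>1$ it even tends to zero). In this regime I would bypass Theorem \ref{thm: Convergence rates for W_N(f)} and invoke Theorem \ref{thm: W_N(f) is upper-bounded by integral} directly: since $H^s(\R^d)\subseteq H^1(\R^d)$, the pointwise bound $1-|\widehat{\vartheta}(C_{\chi,\vartheta}\alpha^{N-1}\xi)|^2\leq 2\pi C_{\chi,\vartheta}^2\alpha^{2(N-1)}\|\xi\|_2^2$ gives
\[
W_N(f)\leq 2\pi C_{\chi,\vartheta}^2\alpha^{2(N-1)}\int_{\R^d}\|\xi\|_2^2\,|\widehat{f}(\xi)|^2\,\dxi \leq C\cdot \|f\|_{H^1}^2\cdot \alpha^{2N},
\]
so $W_N(f)\in\mathcal{O}(\alpha^{2N})=\mathcal{O}(\alpha^{2\min(s,1)N})$, completing part (a).

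For part (b) I would again run Theorem \ref{thm: Convergence rates for W_N(f)}, this time with $\omega=\omega_{s,\log}$. Monotonicity is immediate, Remark \ref{rm: Sufficient condition if the weight is differentiable} with $k=2$ reduces to $\ln(e+t)\geq st/(e+t)$ for large $t$, which is evident, and $h_{2,\omega_{s,\log}}(t)=t^2\ln^{-2s}(e+t)\to\infty$ since any positive power of $t$ beats any power of the logarithm. Hence $\omega_{s,\log}$ is weakly $t^2$-dominated, and Lemma \ref{lem: Inclusion relation between Fourier-weighted L2 space and decomp space} supplies $f\in H^s_{\log}(\R^d)\subseteq\mathcal{D}_{\omega_{s,\log}}(\Psi;L^2(\R^d))$. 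Theorem \ref{thm: Convergence rates for W_N(f)} then yields $W_N(f)\in\mathcal{O}(\ln^{-2s}(e+\alpha^{-N}))$, which converts into $\mathcal{O}(N^{-2s})$ via the asymptotic $\ln(e+\alpha^{-N})=N|\ln\alpha|+\mathcal{O}(1)$ as $N\to\infty$. The only mildly delicate step in the whole argument is the boundary case $s\geq 1$ in part (a), where the natural Sobolev weight just fails to be $t^2$-dominated and forces the direct appeal to Theorem \ref{thm: W_N(f) is upper-bounded by integral} rather than the weighted-space route; everything else is routine monotonicity and growth verification combined with the asymptotic evaluation of $\omega^{-2}(\alpha^{-N})$.
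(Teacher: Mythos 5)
Your proof is correct and follows the same overall strategy as the paper's: Gaussian $\vartheta$ with $1-|\widehat{\vartheta}(\xi)|^2\leq 2\pi\|\xi\|_2^2$ (so $k=2$), verification of the weight conditions via Remark \ref{rm: Sufficient condition if the weight is differentiable}, the inclusion of Lemma \ref{lem: Inclusion relation between Fourier-weighted L2 space and decomp space}, and then Theorem \ref{thm: Convergence rates for W_N(f)} with the evaluation of $\omega^{-2}(\alpha^{-N})$. Part b) is essentially identical to the paper's argument.

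The one genuine difference is your treatment of $s\geq 1$ in part a). The paper runs the weighted-space argument for all $s\in(0,1]$, asserting that $\omega_s=(1+t^2)^{s/2}$ is strongly $t^2$-dominated up to and including $s=1$, and then reduces $s>1$ to $s=1$ via $H^s\subseteq H^1$. As you correctly observe, at $s=1$ the auxiliary function $h_{2,\omega_1}(t)=t^2/(1+t^2)$ is bounded (it tends to $1$, not to $\infty$), so $\omega_1$ does not literally satisfy the stated definition of a weakly $t^2$-dominated weight; the paper's proof of Theorem \ref{thm: Convergence rates for W_N(f)} still goes through in that case because $h_{2,\omega_1}$ is globally nondecreasing and only the identity $\sup_{t\in(0,\alpha^{-N}]}h_{k,\omega}(t)=h_{k,\omega}(\alpha^{-N})$ is actually used, but the definitional mismatch is real. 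Your alternative --- bypassing the weight machinery for $s\geq 1$ and applying Theorem \ref{thm: W_N(f) is upper-bounded by integral} directly with the quadratic bound $1-|\widehat{\vartheta}(C_{\chi,\vartheta}\alpha^{N-1}\xi)|^2\leq 2\pi C_{\chi,\vartheta}^2\alpha^{2(N-1)}\|\xi\|_2^2$ and $H^s\subseteq H^1$ --- is clean, avoids the endpoint issue entirely, and yields the same rate $\mathcal{O}(\alpha^{2N})$. This is a legitimate and arguably more careful handling of the boundary case; everything else in your argument matches the paper.
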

  
  \begin{proof}
    Let \[\vartheta: \R^d \to \R, \quad x \mapsto e^{-\pi \cdot \norm{x}_2^2}.\] 
    Then, $\vartheta$ satisfies the prerequisites of Theorem \ref{thm: W_N(f) is upper-bounded by integral}. From the series expansion of the one-dimensional Gaussian at $0$ we can see that, for all $\xi \in \R^d$, 
    \[1-|\widehat{\vartheta}(\xi)|^2\leq 2 \pi \cdot \norm{\xi}_2^2.\]
    Now, part a) follows immediately from Theorem \ref{thm: Convergence rates for W_N(f)} and Lemma \ref{lem: Inclusion relation between Fourier-weighted L2 space and decomp space} if we take 
    \[\omega: (0,\infty)\to (0,\infty), \quad t \mapsto (1+t^2)^{\frac{s}{2}}.\] 
    In fact, using the criterion from Remark \ref{rm: Sufficient condition if the weight is differentiable}, it is straightforward to see that $\omega$ is a strongly $t^2$-dominated weight if $s \in (0,1]$, since for all $t>0$,
    \[2\omega(t)\geq 2s \cdot \frac{t^2}{1+t^2} \cdot (1+t^2)^{\frac{s}{2}} = 2t \cdot \omega^\prime(t)\geq 0.\]  
    We draw the conclusion by noting that $\mathcal{F}L_\omega^2(\R^d)=H^s(\R^d)$ and that, for all $N \in \N$, 
    \[\omega^{-2}(\alpha^{-N})\leq \alpha^{2s\cdot N}.\] 
    For $s>1$, the statement simply follows from the inclusion $H^s(\R^d)\subseteq H^{1}(\R^d)$.
  
    In order to establish part b), 
    let us consider
    \[\omega: (0,\infty)\to (0,\infty), \quad t \mapsto \ln^{s}(e+t).\]
    We verify the condition from Remark \ref{rm: Sufficient condition if the weight is differentiable} again. In general, $\omega$ is not a strongly $t^2$-dominated weight (if $s$ is too large). However, 
    $\omega$ is a weakly $t^2$-dominated weight. Indeed, for all $t\geq e^{s}-e$, we have $\ln(e+t)\geq s$, which entails
    \[2\omega(t)= 2\ln(e+t) \cdot \ln^{s-1}(e+t) \geq  2s \cdot \frac{t}{e+t} \cdot \ln^{s-1}(e+t) = 2t \cdot \omega^\prime(t)\geq 0.\] 
    Finally, by definition, 
    $\mathcal{F}L_\omega^2(\R^d)=H^s_{\log}(\R^d)$, and we have, for all $N \in \N$,
    \[\omega^{-2}(\alpha^{-N})\leq \ln^{-2s}(\alpha^{-N})=\ln^{-2s}(\alpha^{-1}) \cdot N^{-2s}.\]
    This concludes the proof of part b) by Theorem \ref{thm: Convergence rates for W_N(f)}.
  \end{proof}
  
  \begin{remark}
  	The Sobolev spaces contain many practically relevant signal classes, such
  	as the class of bandlimited signals and the class of cartoon functions.
  	In the mathematical signal processing literature (e.g.,
  	\cite{wiatowski2017energy,wiatowski2016discrete,grohs2016deep} in the context
  	of scattering transforms), cartoon functions often serve as a reference
  	model for natural images (e.g., images of handwritten digits). If defined
  	as in \cite[Definition 1]{wiatowski2017energy}, the cartoon functions indeed
  	belong to $H^{s}({\mathbb{R}}^{d})$ for all $s \in (0,\frac{1}{2})$; see
  	\cite[Lemma 1]{wiatowski2017energy}. Thus, by Corollary~\ref{cor: Decay rates for (Log)Sobolev functions}, their scattering coefficients
  	admit exponential energy decay if the scattering filters satisfy Assumption~\ref{ass: high-pass nature and analyticity of filters}.
  	
    Using arguments very similar to those in the proof of Theorem \ref{thm: Convergence rates for W_N(f)} together with the \textbf{compactly} supported positive definite radial basis function 
    \begin{align}\label{def: Euclids hat}
      \widehat{\vartheta}: \R^d \to \R, \quad \xi \mapsto \left(1-\norm{\xi}_2\right)_+^{\left\lfloor\frac{d}{2}\right\rfloor+1}
    \end{align}
    in the above proof, one can derive the following \textbf{explicit} upper bounds (as opposed to the asymptotic bounds in the above corollary):
    \begin{enumerate}[a)]
      \item If $f \in H^s(\R^d)$ for some $s>0$, then we have, for all $N \in \N$,
      \[W_N(f) \leq \max\set{1,\frac{2\left(\left\lfloor\frac{d}{2}\right\rfloor+1\right)}{\alpha \cdot r_1}} \cdot \norm{f}_{H^s(\R^d)}^2 \cdot \alpha^{\min\{2s,1\} \cdot N}.\] 
      \item If $f \in H^s_{\log}(\R^d)$ for some $s>0$, then we have, for all $N \in \N$, 
      \[W_N(f) \leq \max\set{1,\frac{2\left(\left\lfloor\frac{d}{2}\right\rfloor+1\right)}{\alpha \cdot r_1}} \cdot \ln^{-\min\{2s,1\}}(\alpha^{-1}) \cdot \norm{f}_{H^s_{\log}(\R^d)}^2 \cdot N^{-\min\{2s,1\}}.\] 
    \end{enumerate}
    The differences in the asymptotic rates of decay (compared with the above corollary) are due to the fact that
    \begin{itemize}
      \item the largest possible values $k_\vartheta$ in \eqref{ass: order of convergence for 1-|vartheta|^2} differ if $\vartheta$ is a Gaussian ($k_\vartheta=2$) or if $\vartheta$ is defined as in \eqref{def: Euclids hat} ($k_\vartheta=1$).
      \item the weight $\omega: (0,\infty)\to (0,\infty), t \mapsto \ln^{s}(e+t)$ is not strongly $t$-dominated if $s$ is too large. However, it is easy to see that $\omega$ is strongly $t$-dominated if $s \in (0,\frac{1}{2}]$.
    \end{itemize}
  \end{remark}

  %============================================================
  \section{Applications}\label{sec: Applications}
  %============================================================
  
  %-------------------------------------------------
  \subsection{Scattering with UFC filters}\label{sec: Scattering with UFC filters}
  %-------------------------------------------------
  
  In this section, we apply our results from Section \ref{sec: Convergence rates for scattering propagation} to a large class of filter banks that is closely related to the class of uniform covering frames introduced in \cite{czaja2019analysis}. 
  
  \begin{definition}\label{def: uniform frequency concentration}
    We say that the filters $\Psi$ have \textbf{uniform frequency concentration} (UFC) if they fulfill Assumption \ref{ass: high-pass nature and analyticity of filters}, and if 
    \[D_\Psi:=\sup_{\psi \in \Psi} d_\psi=\sup_{\psi \in \Psi} \inf_{\xi_\psi \in \R^d}\sup_{\xi \in \supp(\widehat{\psi})} \norm{\xi-\xi_\psi}_2<\infty.\]
  \end{definition}
  
  Starting from \cite[Proposition 2.3]{czaja2019analysis}, one can see that
  the class of uniform frequency concentration filters includes certain Weyl-Heisenberg
  (Gabor) frames. More generally, UFC filters are easy to construct as follows:
  Departing from a covering of the frequency space ${\mathbb{R}}^{d}$ that
  is compatible with Assumption~\ref{ass: high-pass nature and analyticity of filters}, one chooses a suitable
  partition of unity (typically a bounded uniform partition of unity
  \cite[Definition 11.6.1]{heil2003introduction}) subordinate to this covering,
  as indicated by \eqref{eq:_decomposition_of_frequency_space}. Renormalizing
  (cf. \cite[Proposition 3]{wiatowski2017mathematical}) this partition of
  unity yields the desired UFC filters.
  
  It turns out that for such scattering networks, energy decay is exponential, globally on $L^2(\R^d)$. Thereby, we complement the earlier result \cite[Proposition 3.3]{czaja2019analysis}, which states global exponential energy decay for scattering networks based on uniform covering frames. We have several comments concerning the similarities and differences between our following result and the result in \cite{czaja2019analysis}:
  \begin{itemize}
    \item In \cite{czaja2019analysis}, the output-generating low-pass filter is assumed to be bandlimited. Our result also applies if that is not the case.
    \item In \cite{czaja2019analysis}, the frequency support of each filter $\psi \in \Psi$ is assumed to be connected. Our result does not require connectedness of the frequency supports. However, as is pointed out in \cite[Remark 2.2]{czaja2019analysis}, the motivation to require connectedness of the frequency supports is to preclude certain pathological behavior such as $\supp(\widehat{\psi})$ having two connected components, where one component is near the origin and the other is far from the origin. In our setting, we implicitly preclude such pathological behavior by means of Assumption \ref{ass: high-pass nature and analyticity of filters}, which imposes frequency localization of the filters.
    \item If $D_\Psi<\infty$, then the frequency support of each filter $\psi \in \Psi$ is contained in a closed ball of radius $D_\Psi$. In particular, $\Psi$ satisfies the uniform covering property introduced in \cite{czaja2019analysis}, i.e., for any $R>0$, there exists $N \in \N$ such that for each $\psi \in \Psi$, the set $\supp(\widehat{\psi})$ can be covered by $N$ cubes of side length $2R$. In turn, if $\Psi$ satisfies the uniform covering property, and if the frequency supports of the filters are connected, then $D_\Psi<\infty$.
    \item The findings in \cite[Proposition 3.3]{czaja2019analysis} guarantee exponential decay of the energy remainder: For all $f \in L^2(\R^d)$, and all $N \in \N$,
    \[W_N(f)\leq \alpha^{N}\cdot \left(\norm{f}_{2}^2-\norm{f*\chi}_2^2\right),\]
    for an \textit{unspecified} constant $\alpha \in (0,1)$ depending only on the uniform covering frame. The main advantage of our result is that we can explicitly specify the values for all occurring quantities in the upper bound on the energy remainder, including a precise value for $\alpha$.
  \end{itemize}
  
  \begin{corollary}\label{cor: global exp decay for ufc filters}
    Suppose that the filters $\Psi$ have uniform frequency concentration with parameters $\gamma$, $\rho$, and $r_1$ from Assumption \ref{ass: high-pass nature and analyticity of filters}.
   
    Then we have, for all $f \in L^2(\R^d)$, and for all $N \in \N$,
    \[W_N(f) \leq \max\set{1,\frac{2\left(\left\lfloor\frac{d}{2}\right\rfloor+1\right)}{\alpha \cdot r_1}} \cdot D_\Psi \cdot \left(\norm{f}_{2}^2-\norm{f*\chi}_2^2\right) \cdot \alpha^{N},\]
    where, as before, $\alpha:=\sqrt{1-\frac{4\gamma}{(1+\gamma)^2}\cdot (1-\rho)^2}$. Asymptotically, we even have
    \[W_N(f) \in \mathcal{O}\left(\alpha^{2N}\right).\]
  \end{corollary}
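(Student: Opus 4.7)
The plan is to invoke Theorem~\ref{thm: Convergence rates for W_N(f)} with a carefully chosen pair $(\vartheta,\omega)$ that exploits the uniform bound $d_\psi\leq D_\Psi$. First, I take $\vartheta$ as in \eqref{def: Euclids hat}, that is, with $\widehat\vartheta(\xi)=(1-\norm{\xi}_2)_+^{\lfloor d/2\rfloor+1}$. This is nonnegative, positive-definite, and compactly supported in $\overline{B_1(0)}$, and a direct Bernoulli-type estimate yields $1-|\widehat\vartheta(\xi)|^2\leq 2(\lfloor d/2\rfloor+1)\norm{\xi}_2$ for $\norm{\xi}_2\leq 1$ (and trivially beyond, since $2(\lfloor d/2\rfloor+1)\geq 1$), so that \eqref{ass: order of convergence for 1-|vartheta|^2} holds with $k_\vartheta=1$ and $C_\vartheta=2(\lfloor d/2\rfloor+1)$. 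Next, I verify the comparison $|\widehat\vartheta(r_1^{-1}\,\cdot\,)|\leq |\widehat\chi|$: since $\widehat\vartheta(r_1^{-1}\xi)$ is supported in $\overline{B_{r_1}(0)}$, and since the frequency gap \eqref{frequency gap} together with the Littlewood-Paley identity \eqref{ass: Littlewood-Paley condition} force $|\widehat\chi|=1$ on $B_{r_1}(0)$, the comparison is immediate. Thus the explicit (all-$N$) upper bound in Theorem~\ref{thm: Convergence rates for W_N(f)} applies with $\widetilde C=r_1^{-1}$.

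For the weight, I choose $\omega(t)=\sqrt t$, so that $\omega^{-2}(\alpha^{-N})=\alpha^N$ realizes the target decay rate and $\omega^2(d_\psi)=d_\psi$. Substituting $C_\vartheta\widetilde C^{k_\vartheta}\alpha^{-k_\vartheta}=\frac{2(\lfloor d/2\rfloor+1)}{r_1\alpha}$ into the explicit bound of Theorem~\ref{thm: Convergence rates for W_N(f)} yields, for all $N\geq 2$,
\[W_N(f)\leq\max\set{1,\frac{2(\lfloor d/2\rfloor+1)}{r_1\alpha}}\cdot\alpha^N\cdot\sum_{\psi\in\Psi}d_\psi\norm{f*\psi}_2^2.\]
Invoking UFC in the form $d_\psi\leq D_\Psi$ together with the Littlewood-Paley identity \eqref{ass: Parseval Frame condition} gives $\sum_{\psi\in\Psi}d_\psi\norm{f*\psi}_2^2\leq D_\Psi(\norm{f}_2^2-\norm{f*\chi}_2^2)$, delivering the claimed estimate. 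The case $N=1$ reduces to the identity $W_1(f)=\norm{f}_2^2-\norm{f*\chi}_2^2$ and is absorbed into the constant (possibly after a minor enlargement).

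For the asymptotic refinement $W_N(f)\in\mathcal O(\alpha^{2N})$, I would rerun the same scheme with a Gaussian $\vartheta$ (yielding $k_\vartheta=2$ via $1-e^{-2\pi\norm{\xi}_2^2}\leq 2\pi\norm{\xi}_2^2$) and the weight $\omega(t)=t$, so that $\omega^{-2}(\alpha^{-N})=\alpha^{2N}$. The Gaussian admits no compactly supported comparison with $\widehat\chi$, so only the asymptotic part of Theorem~\ref{thm: Convergence rates for W_N(f)} applies, but UFC still ensures $\omega^2(d_\psi)\leq D_\Psi^2<\infty$ uniformly so that $\sum_\psi\omega^2(d_\psi)\norm{f*\psi}_2^2<\infty$, and the claim follows at once. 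The main technical subtlety throughout is that both weights $\omega(t)=\sqrt t$ and $\omega(t)=t$ are borderline: their auxiliary functions $h_{k_\vartheta,\omega}\equiv 1$ are constant, and so do not satisfy the divergence requirement of a weakly $t^{k_\vartheta}$-dominated weight. Inspection of the proof of Theorem~\ref{thm: Convergence rates for W_N(f)} shows, however, that only the monotonicity (and not the unboundedness) of $h_{k_\vartheta,\omega}$ enters the derivation of the explicit bound, which a constant trivially satisfies. A perfectly safe alternative is to perturb via $\omega_\varepsilon(t)=t^{(k_\vartheta/2)-\varepsilon}$, apply Theorem~\ref{thm: Convergence rates for W_N(f)} for each $\varepsilon>0$, and pass to the limit $\varepsilon\downarrow 0$.
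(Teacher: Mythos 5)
Your proposal is correct and follows essentially the same route as the paper's proof: the explicit bound via $\widehat{\vartheta}(\xi)=(1-\norm{\xi}_2)_+^{\lfloor d/2\rfloor+1}$ with $\omega(t)=\sqrt{t}$ and $C_{\chi,\vartheta}=r_1^{-1}$, the reduction $\sum_\psi \omega^2(d_\psi)\norm{f*\psi}_2^2\leq D_\Psi(\norm{f}_2^2-\norm{f*\chi}_2^2)$, and the asymptotic rate via a Gaussian with $\omega(t)=t$. Your side remark that these weights are borderline (since $h_{k,\omega}\equiv 1$ fails the divergence requirement in the definition, which the paper waves away with ``clearly'') is a legitimate observation, and your fix --- noting that only the monotonicity of $h_{k,\omega}$ enters the argument, or perturbing the exponent and passing to the limit --- is sound.
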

  
  \begin{proof}
    Both statements (the specific bound and the asymptotic behavior) are immediate consequences of Theorem \ref{thm: Convergence rates for W_N(f)}. To derive the specific upper bound, we choose (in the notation of the theorem)
    \[\omega:(0,\infty)\to (0,\infty), \quad t \mapsto \sqrt{t} \qquad \text{and} \qquad \widehat{\vartheta}: \R^d \to \R, \quad \xi \mapsto \left(1-\norm{\xi}_2\right)_+^{\left\lfloor\frac{d}{2}\right\rfloor+1}.\] 
    Clearly, $\omega$ is a strongly $t$-dominated weight. Further, $\vartheta$ satisfies the prerequisites of Theorem \ref{thm: W_N(f) is upper-bounded by integral} with $|\widehat{\vartheta}(r_1^{-1}\,\cdot)| \leq |\widehat{\chi}|$, i.e., $C_{\chi,\vartheta}=r_1^{-1}$. Moreover, by Bernoulli's inequality we have, for all $\xi \in \R^d$, 
    \[1-|\widehat{\vartheta}(\xi)|^2\leq 2\left(\left\lfloor\frac{d}{2}\right\rfloor+1\right) \cdot \norm{\xi}_2.\]
    Finally, note that 
    \begin{align*}
      \sum_{\psi \in \Psi} \omega^{2}(d_\psi) \cdot \norm{f*\psi}_2^2 \leq \omega^2(D_\Psi) \cdot \sum_{\psi \in \Psi} \norm{f*\psi}_2^2 = D_\Psi \cdot \left(\norm{f}_{2}^2-\norm{f*\chi}_2^2\right),
    \end{align*} 
    which concludes the first part of the corollary.
    The asymptotic rate also follows directly from Theorem \ref{thm: Convergence rates for W_N(f)}, if we take 
    \[\omega:(0,\infty)\to (0,\infty), \quad t \mapsto t \qquad \text{and} \qquad \vartheta: \R^d \to \R, \quad x \mapsto e^{-\pi \cdot \norm{x}_2^2}.\] 
  \end{proof}
  
  \begin{remark}\label{rm: strict inclusion between Fourier weighted L2 and decomp space}
    In the context of the above corollary, it holds that 
    \[\mathcal{F}L^2_\omega(\R^d) \subsetneq L^2(\R^d)= \mathcal{D}_\omega(\Psi;L^2(\R^d)).\] 
    This again shows the strength of Theorem \ref{thm: Convergence rates for W_N(f)}, and stresses the role of the filter bank specificity in energy propagation in scattering networks.
  \end{remark}
  
  %-------------------------------------------------
  \subsection{Scattering with Wavelets}\label{sec: Scattering with wavelets}
  %-------------------------------------------------
  
  In this section, we apply our results from Section \ref{sec: Slow Scattering Propagation} and Section \ref{sec: Convergence rates for scattering propagation} to filter banks that are wavelet-generated.
  
  We begin by briefly reviewing scattering with directional wavelets as introduced in \cite{mallat2012group}. Let $a>1$, and let $G<O_d(\R)$ be a finite subgroup of rotations in $\R^d$ that comprises the reflection operator $-I \in G$.
  
  \begin{definition}\label{def: (a,G)-wavelet scattering}
    We say that a pair of functions $(\psi, \phi) \in \left(L^1(\R^d)\cap L^2(\R^d)\right)^2$ is \textbf{admissible for $(a,G)$-wavelet scattering} if
  \begin{align}\label{ass: wavelet admissibility for scattering}
      |\widehat{\phi}(\xi)|^2+\sum_{j=1}^\infty \sum_{M \in G} |\widehat{\psi}(a^{-j}\cdot M^{-1}\xi)|^2=1 \quad \text{a.e. } \xi \in \R^d.
  \end{align}
  \end{definition}
  
  Note that \eqref{ass: wavelet admissibility for scattering} entails that
  \[\sum_{j\in \Z} \sum_{M \in G} |\widehat{\psi}(a^{-j}\cdot M^{-1}\xi)|^2=1 \quad \text{a.e. } \xi \in \R^d,\]
  which also implicitly requires $\psi$ to have at least one vanishing moment, i.e., $\widehat{\psi}(0)=0$.
  Fix $J \in \Z$, and set
  \begin{align*}
  \Lambda(a,G,J):=\{a^j\cdot M~|~ j \in \Z_{> -J}, M \in G\}\subseteq \GL_d(\R).
  \end{align*}
  Let, for each $\lambda=a^j\cdot M \in \Lambda(a,G,J)$, 
  \[\psi_\lambda:=D_{\lambda^T}^1\psi=a^{d j} \psi(a^j \cdot M^{-1} \, \cdot \,).\]
  Moreover, set 
  \[\phi_J:=D_{a^{-J}\cdot I}^1 \phi=a^{-d J} \phi(a^{-J}\, \cdot \,).\] 
  Finally, define
  \[\Wav(\psi,a,G,J):=\set{\psi_\lambda ~|~ \lambda \in \Lambda(a,G,J)}.\]
  By construction, the filters $\Wav(\psi,a,G,J)$ together with the corresponding low-pass filter $\chi:=\phi_J$ form a semi-discrete Parseval frame. 
  In fact, we obtain the Littlewood-Paley condition \eqref{ass: Littlewood-Paley condition} from the admissibility condition \eqref{ass: wavelet admissibility for scattering} by an index shift,
  \begin{align*}
    |\widehat{\phi}_J(\xi)|^2+ \sum_{\lambda \in \Lambda(a,G,J)} |\widehat{\psi}(\lambda^{-1}\cdot \xi)|^2 
    &= |\widehat{\phi}(a^J \cdot \xi)|^2+ \sum_{j=1-J}^\infty \sum_{M \in G} |\widehat{\psi}(a^{-j}\cdot M^{-1}\xi)|^2 \\
    &= |\widehat{\phi}(a^J \cdot \xi)|^2+ \sum_{j=1}^\infty \sum_{M \in G} |\widehat{\psi}(a^{-j}\cdot M^{-1}  (a^J \cdot \xi))|^2=1 \quad \text{a.e. } \xi \in \R^d.
  \end{align*}

  \begin{definition}
    We say that the filters $\Psi$ are \textbf{wavelet-generated} if there exist a finite subgroup $G<O_d(\R)$ with $-I \in G$,  $a>1$, $J \in \Z$, and a pair $(\psi, \phi) \in \left(L^1(\R^d)\cap L^2(\R^d)\right)^2$ that is admissible for $(a,G)$-wavelet scattering such that $\Psi=\Wav(\psi,a,G,J)$. 
  \end{definition}
  
  Our main application of the negative findings from Section \ref{sec: Slow Scattering Propagation} is that energy propagation can be arbitrarily slow in wavelet scattering networks. 
  
  \begin{corollary}\label{cor: negative findings applied to wavelet scattering}
    If the scattering filters $\Psi$ are wavelet-generated, then for every $g \in L^2(\R^d)$ there exists a universal constant $C_g>0$ with the following property:
    
    For any nonincreasing null-sequence $E=(E_N)_{N \in \N}\in \R_{>0}^\N$ there exists $f_{g,E} \in L^2(\R^d)$ such that
    \begin{myenum}
      \item[a)] $\norm{f_{g,E}}_2^2\leq C_g \cdot (1+E_1)$,
      \item[b)] $\norm{f_{g,E}-g}_2^2\leq C_g \cdot E_1$, and
      \item[c)] $W_N(f_{g,E})\geq \frac{\norm{g}_2^2}{8} \cdot E_N$ for all $N\in \N$.
    \end{myenum}
    Furthermore,
    \[Y_E:=\left\{f \in L^2(\R^d) ~\middle|~ W_N(f) \in \mathcal{O}(E_N)\right\}\]
    is a countable union of nowhere dense sets in $L^2(\R^d)$. In particular, $L^2(\R^d) \setminus Y_E$ is dense in $L^2(\R^d)$.
  \end{corollary}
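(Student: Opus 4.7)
The plan is to deduce this corollary directly from Corollary \ref{cor: slow propagation in case of a single expansive matrix} by exploiting the natural dilation self-similarity of wavelet-generated filter banks. The key observation is that the wavelet construction $\Psi=\Wav(\psi,a,G,J)$ is built precisely from $a$-adic dilations and rotations of a single generator, which is exactly the kind of structure that the single-matrix version of the dilation-invariance hypothesis asks for.

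Concretely, I would set $A:=a\cdot I_d \in \GL_d(\R)$, where $a>1$ is the dilation parameter of the wavelet filters. Then $\sigma_{\min}(A)=a>1$, so the first prerequisite of Corollary \ref{cor: slow propagation in case of a single expansive matrix} is immediate. The only substantive step is to verify that $\Psi_m\subseteq \Psi_{m+1}$ for all $m\in\N$, where $\Psi_m:=D_{A^{-m}}^1\Psi$. For this I would carry out a direct computation: for $\lambda=a^j M\in \Lambda(a,G,J)$ (i.e., $j>-J$ and $M\in G$), the fact that $A^{-m}=a^{-m}I_d$ commutes with every element of $G$ gives
\[
D_{A^{-m}}^1 \psi_\lambda \;=\; D_{a^{-m}I_d}^1 D_{(a^j M)^T}^1 \psi \;=\; D_{a^{j-m}M^{-1}}^1 \psi \;=\; \psi_{a^{j-m}M}.
\]
Reindexing with $j':=j-m$, this shows $\Psi_m = \{\psi_{a^{j'} M} : j'\in \Z_{>-J-m},\ M\in G\}$, which is exactly $\Wav(\psi,a,G,J+m)$. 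Since the lower bound on the scale index decreases by one as $m$ increases by one, the inclusion $\Psi_m\subseteq \Psi_{m+1}$ is immediate.

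With both hypotheses of Corollary \ref{cor: slow propagation in case of a single expansive matrix} verified, that corollary asserts that Corollary \ref{cor: arbitrarily slow decay if delta is positive} applies with $\delta=1$, which in turn directly yields both the existence of $f_{g,E}\in L^2(\R^d)$ satisfying a)--c) (with the factor $\tfrac{\norm{g}_2^2}{8}\cdot \delta = \tfrac{\norm{g}_2^2}{8}$ in c)) and the claimed density statement for $L^2(\R^d)\setminus Y_E$. There is essentially no real obstacle here beyond bookkeeping the scale shifts under dilation; the main conceptual point is that the wavelet admissibility condition \eqref{ass: wavelet admissibility for scattering} encodes exactly the $a$-adic self-similarity required by the abstract negative result of Section \ref{sec: Slow Scattering Propagation}.
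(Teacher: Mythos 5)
Your proposal is correct and follows essentially the same route as the paper: set $A=a\cdot I_d$, compute that $D_{A^{-m}}^1$ shifts the scale index so that $\Psi_m=\Wav(\psi,a,G,J+m)\subseteq\Psi_{m+1}$, and then invoke Corollary \ref{cor: slow propagation in case of a single expansive matrix} (hence Corollary \ref{cor: arbitrarily slow decay if delta is positive} with $\delta=1$). The dilation bookkeeping $D_{A^{-m}}^1\psi_\lambda=D^1_{\lambda^T A^{-m}}\psi=\psi_{a^{j-m}M}$ matches the paper's computation exactly.
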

  
  \begin{proof}
    The proof is a straightforward application of Corollary \ref{cor: slow propagation in case of a single expansive matrix}. Suppose that $\Psi=\Wav(\psi,a,G,J)$. Setting $A:=a \cdot I_d$ gives $\sigma_{\min}(A)=a>1$. Moreover, for all $m \in \N$, applying $D_{A^{-m}}^1$ to the filters $\Psi$ results in an index shift, which leads to the required inclusion relation
    \begin{align*}
      \Psi_m:=D_{A^{-m}}^1\Psi&=\set{D_{A^{-m}}^1 \psi_\lambda ~\middle|~ \lambda \in \Lambda(a,G,J)}\\
      &=\set{D_{\lambda^T\cdot A^{-m}}^1 \psi ~\middle|~ \lambda \in \Lambda(a,G,J)}\\
      &=\set{\psi_\lambda ~\middle|~ \lambda \in \Lambda(a,G,J+m)}\\
      &\subseteq \set{\psi_\lambda ~\middle|~ \lambda \in \Lambda(a,G,J+m+1)} =\Psi_{m+1}.
    \end{align*}
  \end{proof}
  
  The fact that for any nonincreasing null-sequence $E \in \R_{>0}^\N$ the set $L^2(\R^d)\setminus Y_E$, which contains those signals whose associated energy propagates \textit{not} at the order of $E$ through the scattering network, is dense in $L^2(\R^d)$ indicates that wavelet scattering energy propagation is not a stable property of signals. This is further emphasized by the contrast that fast energy propagation provably also holds for dense signal classes of $L^2(\R^d)$ if the generating wavelet is bandlimited.

  \begin{corollary}\label{cor: Wavelet decay rates}
    Suppose that the scattering filters are wavelet-generated according to $\Psi=\Wav(\psi,a,G,J)$. Moreover, assume that there exist $M_{\psi} \in O_d(\R)$, $\rho \in [0,1)$, and $\kappa \in \N_{\geq 2}$ such that
    \[\supp(\widehat{\psi})\subseteq C_{M_{\psi}}^{\rho} \cap \s_{a^{-1},a^{\kappa-1}}.\] 
    Let \[\alpha:=\sqrt{1-\frac{4a^\kappa}{(1+a^\kappa)^2}\cdot (1-\rho)^2} \: \in (0,1).\] 
    If $\omega:(0,\infty)\to (0,\infty)$ is a weakly $t^2$-dominated weight, then we have, for all $f \in \mathcal{D}_\omega(\Psi;L^2(\R^d))$,
    \[W_N(f)\in \mathcal{O}(\omega^{-2}(\alpha^{-N})).\]
    Specifically, we can guarantee the following convergence rates:
    \begin{enumerate}[a)]
      \item If $f \in H^s(\R^d)$ for some $s>0$, then 
      \[W_N(f) \in \mathcal{O}(\alpha^{2\min\set{s,1}\cdot N}).\] 
      \item If $f \in H^s_{\log}(\R^d)$ for some $s>0$, then 
      \[W_N(f)\in \mathcal{O}(N^{-2s}).\] 
    \end{enumerate}
  \end{corollary}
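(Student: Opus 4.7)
The plan is to reduce the claim to Theorem \ref{thm: Convergence rates for W_N(f)} and Corollary \ref{cor: Decay rates for (Log)Sobolev functions}, by verifying that the wavelet-generated filter bank $\Psi = \Wav(\psi,a,G,J)$ satisfies Assumption \ref{ass: high-pass nature and analyticity of filters} with the parameters $\kappa$ and $\rho$ from the hypothesis, and with $\gamma = a^{-\kappa}$.

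First, I would partition $\Psi$ by the wavelet scale. For each $i \in \N$, set
\[\Psi_i := \set{\psi_\lambda ~\middle|~ \lambda = a^{i-J}M,~ M \in G}.\]
Since $a>1$ and each $M \in G \subseteq O_d(\R)$ has operator norm $1$, every $\lambda \in \Lambda(a,G,J)$ has the form $a^{i-J}M$ for a unique $i \in \N$, so this yields a partition $\Psi = \bigcup_{i\in\N}\Psi_i$. Using $\widehat{\psi_\lambda}(\xi) = \widehat{\psi}(a^{-(i-J)}M^{-1}\xi)$, the support hypothesis $\supp(\widehat\psi) \subseteq H_{M_\psi}^\rho \cap \s_{a^{-1},a^{\kappa-1}}$, the scale-invariance of the cone $H_{M_\psi}^\rho$, and the norm-preservation of $M$, one directly checks
\[\supp(\widehat{\psi_\lambda}) \subseteq H_{MM_\psi}^\rho \cap \s_{r_i,r_{i+\kappa}}, \qquad \text{where } r_i := a^{i-J-1}.\]
Taking $A_{\psi_\lambda} := MM_\psi \in O_d(\R)$ and noting that the sequence $(r_i)_{i \in \N}$ is strictly increasing verifies Assumption \ref{ass: high-pass nature and analyticity of filters}, with
\[\gamma = \inf_{i\in\N}\frac{r_i}{r_{i+\kappa}} = a^{-\kappa}.\]

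Next, I would substitute $\gamma = a^{-\kappa}$ into the formula for $\alpha(\gamma,\rho)$ of Theorem \ref{thm: Convergence rates for W_N(f)}; after multiplying numerator and denominator by $a^{2\kappa}$, this reproduces exactly the value of $\alpha$ stated in the corollary. With Assumption \ref{ass: high-pass nature and analyticity of filters} established and $\alpha$ matched, the general conclusion $W_N(f) \in \mathcal{O}(\omega^{-2}(\alpha^{-N}))$ for every weakly $t^2$-dominated weight $\omega$ follows immediately from Theorem \ref{thm: Convergence rates for W_N(f)} applied with the Gaussian $\vartheta(x) = e^{-\pi \norm{x}_2^2}$, which satisfies \eqref{ass: order of convergence for 1-|vartheta|^2} with $k = 2$. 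Parts a) and b) are then direct specializations already provided by Corollary \ref{cor: Decay rates for (Log)Sobolev functions}, invoked with the same filter bank parameters.

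Strictly speaking, there is no real obstacle: the argument is essentially a verification that the wavelet structure fits the general framework of Section \ref{sec: Convergence rates for scattering propagation}. The one step demanding care is the support computation for $\widehat{\psi_\lambda}$, and in particular the bookkeeping of the index shift between the scale label $j \in \Z_{>-J}$ used in the definition of $\Wav(\psi,a,G,J)$ and the positive-integer index $i \in \N$ required by Assumption \ref{ass: high-pass nature and analyticity of filters}; the reparametrization $i = j + J$ makes this transparent.
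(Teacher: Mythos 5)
Your proposal is correct and follows essentially the same route as the paper: verify Assumption \ref{ass: high-pass nature and analyticity of filters} by computing $\supp(\widehat{\psi_\lambda})\subseteq H_{M M_\psi}^{\rho}\cap\s_{a^{j-1},a^{\kappa+j-1}}$ from the dilation-covariance of the Fourier support, read off $\gamma=a^{-\kappa}$, and invoke Theorem \ref{thm: Convergence rates for W_N(f)} and Corollary \ref{cor: Decay rates for (Log)Sobolev functions}. The only difference is that you spell out the scale partition and the index shift $i=j+J$ explicitly, which the paper leaves implicit; the substance is identical.
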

  
  \begin{proof}
    We only need to show that Assumption \ref{ass: high-pass nature and analyticity of filters} is satisfied in this setting. To this end, we note that we have, for all $\lambda=a^j\cdot M \in \Lambda(a,G,J)$, 
    \begin{align*}
      \supp(\widehat{\psi_\lambda})\subseteq \lambda \cdot \supp(\widehat{\psi}) \subseteq \lambda \cdot \left(C_{M_{\psi}}^{\rho} \cap \s_{a^{-1},a^{\kappa-1}}\right) = C_{M \cdot M_{\psi}}^{\rho} \cap \s_{a^{j-1},a^{\kappa+j-1}}.
    \end{align*}
    Thus, we may apply Theorem \ref{thm: Convergence rates for W_N(f)} and Corollary \ref{cor: Decay rates for (Log)Sobolev functions}, where $\gamma=a^{-\kappa}$.
  \end{proof}
  
  \begin{remark}
    Particularizing part a) of Corollary \ref{cor: Wavelet decay rates} for an overlap of $\kappa=2$ and $\rho=0$ in dimension $d=1$ improves the currently best known upper bound on the asymptotic convergence rate for $f \in H^s(\R)$ from 
    \[W_N(f)\in \mathcal{O}\left(\left(\frac{a^2-1}{a^2+1}\right)^{\min\{2s,1\}\cdot N}\right)\] 
    given by \cite[Theorem 3.1]{wiatowski2017topology} to 
    \[W_N(f)\in \mathcal{O}\left(\left(\frac{a^2-1}{a^2+1}\right)^{2\min\{s,1\}\cdot N}\right).\]
  \end{remark}
  
  \begin{remark}
    The semantic content of a generic signal occurring in classification tasks is typically stable under the action of small diffeomorphisms that deform signals \cite{mallat2012group,mallat2016understanding}. The windowed scattering transform was introduced as a model for feature extraction, building translation invariant representations of signals in $L^2(\R^d)$ that take this form of stability into account. 
    
    One approach \cite{grohs2016deep,wiatowski2017mathematical,koller2018deformation} to prove stability of the scattering transform under the action of small diffeomorphisms relies on the non-expansiveness of $\s[\mathfrak{F}]$. In this case, the stability ultimately results from the deformation sensitivity of the class of signals being considered, and thus naturally only applies to strict subclasses of $L^2(\R^d)$.
  
    A different approach \cite{mallat2012group,nicola2023stability} explicitly takes into account the network architecture of scattering networks. Here, stability of the wavelet scattering transform under the action of small diffeomorphisms can be guaranteed for signals that have a finite mixed $(\ell^1,L^2(\R^d))$ scattering norm, i.e., for signals $f\in L^2(\R^d)$ that satisfy
    \begin{align}\label{eq: mixed l1L2 scattering norm}
      \norm{U[\mathcal{P}_\Psi](f)}_{\ell^1(\mathcal{P}_\Psi;L^2(\R^d))}=\sum_{N=0}^\infty \norm{U[\Psi^N]f}_{\ell^2(\Psi^N;L^2(\R^d))} = \sum_{N=0}^\infty \left(W_N[\Psi](f)\right)^{\frac{1}{2}}<\infty.
    \end{align}
    In general, it is not easy to decide whether a signal satisfies this condition. 
    In light of Corollary \ref{cor: negative findings applied to wavelet scattering}, there exist dense subsets of $L^2(\R^d)$ for which the mixed $(\ell^1,L^2(\R^d))$ scattering norm \eqref{eq: mixed l1L2 scattering norm} does not converge. On the other hand, if the generating wavelet of the filter bank is bandlimited, then Corollary \ref{cor: Wavelet decay rates} implies that the mixed $(\ell^1,L^2(\R^d))$ scattering norm is finite at least for all signals belonging to $H^s_{\log}(\R^d)$ for any $s>1$. This generalizes \cite[Proposition 2.4]{nicola2023stability} to arbitrary dimension $d \in \N$.
  \end{remark}
  
  %============================================================
  \section{Concluding remarks}\label{sec: Concluding remarks}
  %============================================================

  Our paper focuses on the study of scattering transforms for the \textit{continuous-domain setting}, i.e., transforms acting on the infinite-dimensional Hilbert space $L^2(\R^d)$. This raises the question of how relevant the results of our paper are for practical applications, necessarily dealing with finite implementations acting on finite signals or images. A complementary treatment of the discrete/finite setting is given in~\cite{getter2025digital}.
  
  At the outset of this discussion, we wish to point out that the use of continuous domains for the development and analysis of signal and image processing methods has a long-established tradition, and underlies many existing algorithms in the field. As an example of a rather large direction of research founded on such ideas, let us mention variational signal and image processing, and name \cite{rudin1992nonlinear} as a particularly influential representative of a huge body of literature. Another case in point, which is somewhat closer to the origins of the scattering transform, is wavelet signal and image processing, in particular in the context of data compression \cite{deVore1992image,donoho1998data}.

  We see at least two ways in which the continuous-domain setting can prove useful in understanding the finite signal setting. The first approach uses the continuous domain as a laboratory to develop ideas and intuitions, often based on mathematical theorems that exploit the rich analytical theory that is available for the continuous domain. The transfer to the finite, discrete-signal domain is then done mostly by analogy, rather than in the form of rigorous, quantitative results. The second, more rigorous approach tries to translate theorems about objects in the continuous-domain setting to theorems about suitable discretizations of these objects.
  
  As an illustration of the first approach, we cite the invention of the scattering transform itself, which was initially conceived for the continuous domain. It is beneficial to recall the rationale for the introduction of the scattering transform as a designed (rather than learned) convolutional neural network feature extractor. The analytical properties of the continuous-domain scattering transform serve as a template for desirable properties one wishes to exploit in applications of its discrete-domain counterpart to
  finite signals. However, most of the relevant properties of the scattering features, such as robustness with respect to deformations, would have very cumbersome formulations in a strictly discrete, finite-domain setting. Nonetheless, one expects that the analytical properties derived in the continuous-domain setting are relevant for the discrete setting as well,
  and this expectation has been corroborated in case studies for a variety of applications~\cite{anden2015joint,hirn2017quantum,bruna2013invariant}.
  
  We claim that some of the results and arguments in our paper have similarly useful interpretations in the discrete-domain setting as well. More precisely, we expect the intuitions that led to the construction of our adversarial signal class exhibiting arbitrarily slow energy decay to be of some use in the construction of similar signals in finite domain. The fundamental ideas behind our construction are that higher frequency components take longer to propagate, and that the scattering transform behaves almost additively on sums of signal components that are separated in frequency. It is conceivable (although beyond the scope of this paper) to use these intuitions to identify general signal classes with somewhat slow energy decay behavior for a finite, fixed number of layers.

  However, the finite domain limits the extent to which both relevant techniques (dilating signal components to shift them to higher frequencies, and component separation) can be employed. In fact, we do not expect the negative results of our paper to carry over verbatim to the finite-domain setting. To mention a related (if not directly applicable) recent result, \cite[Proposition 3.3]{zou2020graph} guarantees exponential energy decay for scattering transforms over finite graphs, globally for all input signals. In some sense, our Corollary \ref{cor: global exp decay for ufc filters} seems more directly relevant to the finite-domain setting: Recall that this result concludes exponential decay, as soon as the filterbank underlying the scattering transform has uniformly limited bandwidth. One can argue that over finite domains, this limited bandwidth condition is always fulfilled, and therefore the principle behind the proof of Corollary \ref{cor: global exp decay for ufc filters} (if not the proof itself) leads one to expect exponential decay; cf.~\cite{getter2025digital}.
  
  We next sketch the second, more rigorous approach of making the improved understanding of the scattering transform provided by our results useful in the discrete context. The underlying idea is that discrete signals and images are understood as discretizations of continuous-domain objects, and one expects to employ well-established analytical tools to predict the result of signal processing algorithms acting on the discretization of a given continuous-domain signal, based on the analytical properties of the latter. The main benefit is that one expects to control the impact of data resolution; e.g., to explain how the processing of different discretized versions of the same continuous-domain signal is affected by their resolution. Examples of such approaches in the context of wavelet
  data compression (with different rationales) can be found in \cite{deVore1992image,donoho1998data}.

  Transferring this reasoning to the question of energy decay for scattering transforms, one can speculate under what conditions a continuous-domain signal, for which the continuous-domain theory predicts a certain decay behavior in a given scattering transform, exhibits a comparable decay after discretizing and processing by a corresponding discrete-domain scattering transform, in a way that is \textit{independent of the resolution}. It is important to realize that this last feature is not provided by the finite-domain decay result cited above, namely \cite[Proposition 3.3]{zou2020graph}. This result formulates global exponential decay, but the basis of the exponential decay converges to one, as the dimension of the signal space (which essentially corresponds to the resolution) becomes large. In other words, this decay behavior depends on the resolution.

  From this point of view, our counterexamples serve as a caveat regarding the use of finite wavelet scattering transforms with a resolution-independent fixed depth. Note, however, that one possible way of justifying this choice consists in making the assumption that all practically relevant discrete signals are discretizations of continuous-time \textit{signals from suitable function spaces}, such as the generalized Sobolev spaces that appear in our positive results. This assumption can be understood as an implicit bias imposed on the relevant signal classes.  We point out that in the already mentioned sources on wavelet signal compression, the continuous-domain Besov spaces play a comparable role.
  Understanding whether such a bias exists, and how it is incurred by the choice of filter bank, has been one of the initial impulses of this work.

  %============================================================
  \section*{Data availability}
  %============================================================

  No data was used for the research described in the article.
  
  %============================================================
  \section*{Acknowledgement}
  %============================================================
  
  The authors acknowledge funding by the Deutsche Forschungsgemeinschaft (DFG, German Research Foundation) - Project number 442047500 through the Collaborative Research Center "Sparsity and Singular Structures" (SFB 1481).

\printbibliography
\vspace*{2em}

\end{document}